\theoremstyle{plain}
\newtheorem{thm}{Theorem}
\numberwithin{thm}{section}
\newtheorem{lem}[thm]{Lemma}
\newtheorem{defi}[thm]{Definition}
\newtheorem{prop}[thm]{Proposition}
\newtheorem{cor}[thm]{Corollary}
\theoremstyle{definition}
\numberwithin{defn}{section}
\theoremstyle{definition}
\newtheorem{rem}{Remark}
\numberwithin{rem}{section}
\numberwithin{equation}{section}
\numberwithin{equation}{section}
\def\N{\mathbb{N}}
\def\R{\mathbb{R}}
\def\Ll{L_{\textnormal{loc}}}
\def\Lip{\mathcal Lip}
\def\Supp{\mathop{\mbox{\textup{Supp}}}}
\def\div{\mbox{\textup{div}}}
\def\ep{\varepsilon}
\def\longrightharpoonup{\mathop{\relbar\joinrel\rightharpoonup}}
\title{The Vlasov-Navier-Stokes system in a 2D pipe: \\existence and stability of regular equilibria}
\author{Olivier Glass\footnote{CEREMADE, Universit\'e Paris-Dauphine, CNRS UMR 7534, PSL Research University, Place du Mar\'echal de Lattre de Tassigny, 75775 Paris Cedex 16, France},  Daniel Han-Kwan\footnote{CMLS - \'Ecole polytechnique, CNRS, Universit\'e Paris-Saclay, 91128 Palaiseau Cedex, France}\, and Ayman Moussa\footnote{Sorbonne Universit\'es, UPMC Univ Paris 06 \& CNRS, UMR 7598 LJLL, Paris, F-75005, France}
}
\begin{document}
\maketitle

%
%
\begin{abstract}
In this paper, we study the Vlasov-Navier-Stokes system in a 2D pipe with partially absorbing boundary conditions. We show the existence of stationary states for this system near small Poiseuille flows for the fluid phase, for which the kinetic phase is not trivial. We prove the asymptotic stability of these states with respect to appropriately compactly supported perturbations. The analysis relies on geometric control conditions which help to avoid any concentration phenomenon for the kinetic phase.
\end{abstract}
{\small {\bf Keywords.} Spray model, incompressible Navier-Stokes equation, kinetic/fluid coupling, stability.}
%
%
%
%
%
\section{Introduction}
Fluid-kinetic systems aim at describing the motion of a dispersed phase of particles within a fluid. The dispersed phase is represented by a density function solving a kinetic equation, whence the naming ``fluid-kinetic''. Historically these models were introduced in the combustion theory framework in the seminal thesis of O'Rourke \cite{oro}. The large amount of modelling possibilities (see \cite{wil} or \cite{duf_these,des-10}) for the fluid (compressibility, viscosity \emph{etc.}), the dispersed phase (thin/thick spray, Brownian motion \emph{etc.}) and their interaction (drag force, Basset force, lift force, retroaction \emph{etc.})  led to a constellation of fluid-kinetic couplings which have been studied from the mathematical point of view quite intensively in the past two decades. Chronologically the first mathematical studies focused more on the corresponding Cauchy problem and appeared in the late nineties, see for instance the works of Hamdache \cite{Hamdache} or Anoschchenko and Boutet de Monvel \cite{ano-bou}. In continuation of these papers, existence of weak solutions (see \cite{BDGM,mel-vas07a} and the more recent \cite{bou-gran-mou}) or classical solutions under smallness conditions, together with their long-time behavior (see \cite{Jabin,gou-he-mou-zha,cha-kan-lee-13,li-mu-wan-15}) or blow-up (see \cite{choi}) were explored. Another interesting feature of these systems is their link, through an asymptotic regime, to other physically relevant systems. These kinds of limits can be related to Hilbert's $6$th problem of axiomatization of physics, because they allow to derive rigorously the equation of continua from more elementary systems. Typical examples are \emph{hydrodynamic limits} for which the purpose is to replace the density function by averaged quantities (mass, momentum \emph{etc.}) in order to recover, after a rigorous asymptotic, classical equations of fluid mechanics, see  \cite{GJV1}, \cite{ben-des-mou} or \cite{dgrb} for an example involving a collision operator. Since kinetic equations are not ``first principles'' \emph{per se}, a comprehensive derivation would suggest to obtain asymptotically fluid-kinetic systems starting from fluid-solid equations. These \emph{mean-field limits} have been explored recently \cite{dgr,hill} using homogenization techniques reminiscent of the pioneering works of Allaire, Murat and Cioranescu \cite{allaire,strange}. Finally, in an another direction, Moyano \cite{Moy1,Moy2} recently studied the controllability properties of fluid-kinetic systems.

\bigskip

Our paper deals with the long time behavior of the Vlasov-Navier-Stokes system. 
Its originality with respect to the previous state of the art stems from the fact that we consider solutions around a nontrivial stationary solution. As it will be explained below, the very existence of such an equilibrium is already remarkable and relies strongly on the geometry of the domain and the boundary conditions that we enforce. Up to our knowledge, as far as fluid-kinetic couplings are concerned, stability of stationary solution was up to now especially tackled when the kinetic equations includes a Fokker-Planck term, allowing to consider the equilibrium $(u=0,f=\mathcal{M})$ (as usual $u$ stands for the velocity of the fluid, and $f$ for the distribution function of the kinetic phase), where $\mathcal{M}(v):=e^{-|v|^2/2}$ is the standard Maxwellian, around which smooth solutions can be studied (see \cite{gou-he-mou-zha,cardumou} and the references therein). When the dispersed phase is submitted to a drag force, if no diffusive term smoothes out the kinetic equation, and in the absence of any dissipative mechanism, the only nontrivial equilibria that one can imagine are singular, in the sense that the density function becomes monokinetic (that is to say, Dirac measures in velocity), see the discussion below. The purpose of this paper is to find for the Vlasov-Navier-Stokes coupling a  relevant setting for which nontrivial stationary solutions exist and study the local asymptotic stability of such equilibria. We consider, in dimension $2$, the Vlasov-Navier-Stokes system:
\begin{gather}
\label{Kinetic}
\partial_{t} f + v \cdot \nabla_{x} f + \div_{v} ((u-v) f) = 0 \text{ for } (t,x,v) \text{ in } \R_+ \times \Omega \times \R^{2}, \\
\label{NavierStokes1}
\partial_{t} u + (u \cdot \nabla) u - \nu \Delta u + \nabla p = \int_{\R^{2}} f(t,x,v) (v-u(t,x)) \, dv  \text{ for } (t,x) \text{ in } \R_+ \times \Omega , \\
\label{NavierStokes2}
\div\, u=0 \text{ for } (t,x) \text{ in } (0,T) \times \Omega,
\end{gather}
under relevant boundary conditions, where $\Omega$ is a domain of $\R^2$ which we shall describe later. This system couples a Vlasov equation with friction (drag force) to the incompressible Navier-Stokes equations through a forcing term. In these equations, $u=u(t,x)$ represents the velocity field of the fluid phase at time $t$ and position $x \in \Omega$, while $f=f(t,x,v)$ stands for the distribution function of a repartition of some particles in the phase space $\Omega \times \R^2$. 

\par

\bigskip
Before specifying precisely $\Omega$ and the boundary conditions, we begin with some remarks about the large time behavior of solutions in the case $\Omega= \R^2$ (taking for granted the existence of solutions, that we shall not discuss for now). Forgetting for a while about the Navier-Stokes equations, we therefore first consider the Vlasov equation with friction
$$
\partial_{t} f + v \cdot \nabla_{x} f - \div_{v} (v f) = 0 \ \text{ in } \ (0,T) \times \R^{2} \times \R^{2},
$$
whose solutions are explicitly given by
$$
f(t,x,v) = e^{2t} f_{|t=0} (x- (e^{t}-1) v, e^{t} v).
$$
One deduces that their behavior as $t \to +\infty$ is as follows:
\begin{equation*}
\displaystyle f(t,x,v) \longrightharpoonup_{t \to +\infty} \left( \int_{\R^2} f_{|t=0} (x-v,v) \, dv \right) \delta_{v=0},
\end{equation*}
where $\delta$ stands for the Dirac measure. Two remarks are in order:
\begin{itemize}
\item the trivial distribution function $0$ is the only $L^1_{\textnormal{loc}}$ stationary solution of the Vlasov equation with friction;
\item this trivial distribution is unstable as any non-zero initial condition yields a solution that weakly converges to a Dirac mass as time goes to infinity.
\end{itemize}
Considering again the coupling with Navier-Stokes,  one may expect a similar large time behavior for the distribution function. Indeed, some evidence is brought by the fact that the linearized equations around the trivial state $(u=0, f=0)$ are the decoupled Vlasov with friction and Stokes equations (without source). Therefore, at least in the small data regime, we do not expect the Navier-Stokes to prevent the convergence of the distribution function to Dirac measures. However, no result in this direction has been proved rigorously, at least to the best of our knowledge. 
Let us mention though some partial results: in \cite{choif} Choi and Kwon managed to exhibit a monokinetic behavior for the Vlasov-Navier-Stokes but only under the assumption of strong \emph{a priori} estimates; in \cite{Jabin}, Jabin replaces the Navier-Stokes by a stationary Stokes equation (also with a different coupling term) and was able to prove  
that there is indeed also convergence to a Dirac measure in $0$.

The mechanism at stake in the Vlasov equation with friction on the whole space is a competition between \emph{dispersion} and \emph{friction}, with friction always taking over in the end. We shall see that in a domain $\Omega$ with (partially) absorbing boundary conditions, there are some cases where dispersion can be the dominant effect, because particles will be absorbed before the friction mechanism can become efficient. \par
\bigskip
The (simple) geometry that we consider in this paper is as follows. The system is posed in a pipe
\begin{equation*}
\Omega := (-L,L) \times (-1,1),
\end{equation*}
with $L>0$. Let us consider the vertical/horizontal incoming boundaries of the phase space domain (with the four corners excluded):
\begin{gather*}
\Gamma^{l} := \{- L\} \times (-1,1) \times \{ v_1 >0\} \ \text{ and } \ 
\Gamma^{r} := \{ L\} \times (-1,1) \times \{v_1 <0\}, \\
\Gamma^{u} := (-L,L) \times \{1\} \times \{v_2 <0\}  \ \text{ and } \ 
\Gamma^{d} := (-L,L) \times \{- 1\} \times \{ v_2 >0\},
\end{gather*}
in such a way that $\Sigma^-:=\Gamma^{l} \cup \Gamma^{r} \cup \Gamma^{u} \cup \Gamma^{d}$ gather all (non corners) phase space points $(x,v)$ such that $ n(x) \cdot v <0 $, where $n(x)$ is the outward unit vector defined for $x$ in $\partial\Omega \setminus\{(\pm L,\pm 1)\}$. \par
\begin{figure}[htbp]
	\begin{center}
		\input{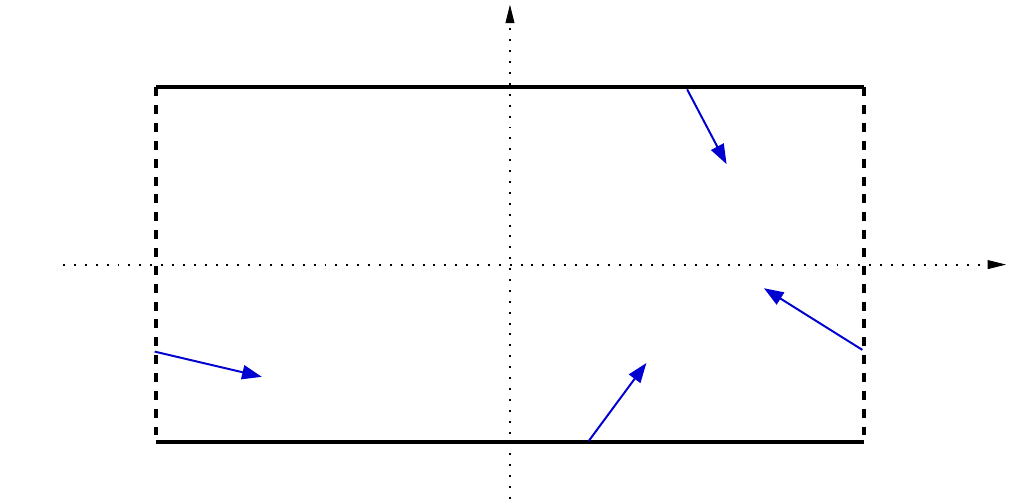_t}
	\end{center}
	\caption{The domain $\Omega$}
	\label{fig:domain}
\end{figure}
Likewise, we define the outgoing boundary of the phase space domain as $\Sigma^+:=\{(x,v)\,:\, (x,-v) \in \Sigma^-\}$ and eventually we introduce the singular subset 
\begin{equation} \label{Eq:DefSigmas}
\Sigma^s := (\partial \Omega \times \R^2) \setminus (\Sigma^+ \cup \Sigma^-), 
\end{equation}
gathering all the boundary phase space points for which either the normal is not defined (\emph{i.e.} corners) or such that $v$ is tangent to $\partial\Omega$. We enforce the following mixed incoming/absorbing and Dirichlet boundary conditions for $t$ in $\R_+$:
\begin{gather}
 \label{CBu1}
u(t,x) =0 \text{ for } x \text{ on } (-L,L) \times \{-1,1\}, \\
 \label{CBu2}
u(t,x) = u_p(x) \text{ for } x \text{ on }  \{-L,L\} \times [-1,1], \\
 \label{CBf1}
f(t,x,v) = \psi(x_2,v) \text{ for } (x,v) \text{ in } \Gamma^l, \\
\label{CBf2}
f = 0 \text{ for } (x,v) \text{ on } \Gamma^r \cup \Gamma^u \cup \Gamma^d,
\end{gather}
where $u_p(x)$ is a given \emph{Poiseuille flow}  (that we introduce below) and $\psi(x_2,v)$ is a given \emph{incoming} distribution function, defined on $\Gamma^l$. In this paper $\psi$ will be assumed to be nonnegative and to have a compact support in $(-1,1) \times \R^{2}$. We underline that $\psi$ being compactly supported in space in $(-1,1)$, compatibility conditions at $x=(-L,1)$ and $x=(-L,-1)$ are satisfied for free. 
Note that the boundary conditions~\eqref{CBf1} and~\eqref{CBf2} physically mean that some particles are injected into the pipe on the side $x_1=-L$ (and only on this side), while they are absorbed when reaching transversally the sides $x_2=-1,1$ and $x_1=-L,L$. The Vlasov-Navier-Stokes system in this particular geometry and with these boundary conditions is the two-dimensional version of a model used to describe the transport and deposition of aerosol inside the human upper airways, see \emph{e.g.} \cite{bou-gra-lor-mou} for more details concerning modelling issues.
\par 
\bigskip
We recall that a Poiseuille flow is a particular stationary solution of \eqref{NavierStokes1}-\eqref{NavierStokes2} without source, that is
\begin{gather}
\label{SNS1}
(u \cdot \nabla) u - \nu \Delta u + \nabla p = 0 \text{ in }  \Omega , \\
\label{SNS2}
\div\, u=0 \text{ in } \Omega ,
\end{gather}
and is explicitly given by
\begin{equation} \label{Poiseuille}
u_{p}(x) = (1-x_2^{2}) u_{\rm max} e_{1},
\end{equation}
where we denote by $(e_{1},e_{2})$ the canonical basis of $\R^{2}$, with associated pressure $p= -2  \nu u_{\rm max} x_1$.
We call the corresponding $u_{\rm max}>0$ the {\it intensity} of the Poiseuille flow. Note also that $u_p$ satisfies the boundary condition~\eqref{CBu1}, so that \eqref{CBu1} and \eqref{CBu2} may be merged into the single boundary condition
\begin{equation*}
u(t,x) =u_p(x) \text{ for } x \text{ on } (-L,L) \times \{-1,1\} \cup  \{-L,L\} \times [-1,1].
\end{equation*}
\begin{rem}
\label{BC0}
The homogeneous Dirichlet boundary condition~\eqref{CBu1} is taken with a modelling perspective, as it may be relevant for a human lung.  As a matter of fact, that $u$ is identically $0$ on $(-L,L) \times \{-1,1\}$ is at the origin of several complications in the subsequent analysis. Other choices could have been possible; for instance we could have taken instead of~\eqref{CBu1}-\eqref{CBu2}:
\begin{equation} \label{Poiautre}
u(t,x) =u'_p(x) \text{ for } x \text{ on } (-L,L) \times \{-1,1\} \cup  \{-L,L\} \times [-1,1],
\end{equation}
for $u'_{p}(x) = (1-\lambda x_2^{2}) u_{\rm max} e_{1}$ with $\lambda \in (0,1)$. In that case observe that the modulus of $u'_p$ is uniformly bounded from below on $\overline \Omega \times \R^2$. With such a choice, the analysis of this paper could still be carried out but would be quite simplified and the result would be strenghtened because of this uniform bound from below. We refer to Remarks~\ref{BC1} and~\ref{BC2}.
\end{rem}
%
We shall refer to the system \eqref{Kinetic}-\eqref{NavierStokes1}-\eqref{NavierStokes2}-\eqref{CBu1}-\eqref{CBu2}-\eqref{CBf1}-\eqref{CBf2} as the Vlasov-Navier-Stokes system in the pipe.
The  goal of this work is to study small data solutions whose fluid velocity fields are close to a Poiseuille flow. %
\par
\bigskip
Let us gather the main statements of this paper in the following informal theorem. We will make them  more precise in the course of the paper.
%
%
\begin{thm}

\begin{itemize}
\item For a sufficiently small Poiseuille flow $u_p$ (as boundary condition for the fluid equation), there exist boundary data $\psi$ for the kinetic equation such that the corresponding Vlasov-Navier-Stokes system admits nontrivial stationary solutions $(\overline{u},\overline{f})$ close to $(u_p,0)$. By nontrivial, we mean that the kinetic part is not identically $0$. (See Theorem~\ref{Theo:Equilibria} for a more precise statement.)
\item The nontrivial stationary solutions $(\overline{u},\overline{f})$ introduced above satisfy an exponential stability property, in the sense that any Leray solution of the Vlasov-Navier-Stokes system with the same boundary conditions as $(\overline{u},\overline{f})$ and appropriately compactly supported initial condition sufficiently close to $(\overline{u},\overline{f})$ converges exponentially fast to $(\overline{u},\overline{f})$ as the time goes to infinity. (See Theorem~\ref{thm-expo} for a more precise statement.)

\item A consequence is a local uniqueness property of the aforementioned stationary solutions (see Corollary \ref{cor:uni}).

\end{itemize}
\end{thm}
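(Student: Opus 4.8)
The plan is to establish the three items in order; the first two are the substance, the third a short corollary. \emph{(Existence of equilibria.)} I would look for $\overline u = u_p + w$ with $w$ small in a suitable Sobolev norm and $\overline f \geq 0$ small, solving the stationary version of \eqref{Kinetic}--\eqref{NavierStokes2} under \eqref{CBu1}--\eqref{CBf2}, by a fixed point: given $\overline u$ close to $u_p$, first solve the stationary Vlasov equation $v\cdot\nabla_x\overline f + \div_v((\overline u-v)\overline f)=0$ by the method of characteristics, then insert the force $\int_{\R^2}\overline f\,(v-\overline u)\,dv$ into the stationary Navier--Stokes system, solve for a new velocity field, and iterate. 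The characteristics $(X,V)$ solve $\dot X = V$, $\dot V = \overline u(X)-V$; along backward characteristics $\overline f$ equals $0$ unless the characteristic reaches $\Gamma^l$, in which case it equals $\psi$ at the entry point weighted by the Jacobian $e^{2\tau}$, $\tau$ the elapsed time. For this to produce a bounded $\overline f$ with controlled velocity moments I would impose a \emph{geometric control condition}: every characteristic issued from $\Supp\psi$ exits $\overline\Omega$ transversally through $\Sigma^+$ in uniformly bounded time, before friction slows $|V|$ into a stagnation regime. For $u_{\rm max}$ small and $\psi$ compactly supported and bounded away from $v_1 = 0$ this holds at $\overline u = u_p$ and, by a perturbation argument on the ODE flow, persists for $\overline u$ near $u_p$; Schauder (or Banach, via Lipschitz dependence of the flow on $\overline u$) then closes the loop, with $\overline f\not\equiv 0$ since $\psi\geq 0$, $\psi\not\equiv 0$. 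This would be the content of Theorem~\ref{Theo:Equilibria}.

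\emph{(Exponential stability.)} Given a Leray solution $(u,f)$ with the same boundary data and compactly supported initial datum $(u_0,f_0)$ close to $(\overline u,\overline f)$, I would set $w = u - \overline u$ and run a bootstrap in time. The energy estimate for $w$ — subtract the two momentum equations, test against $w$ (which vanishes on $\partial\Omega$), use the Poincar\'e inequality and smallness — shows that as long as the kinetic source is controlled, $\|w(t)\|$ stays small and decays. On the kinetic side, because $f_0$ is compactly supported and $w$ is small, the characteristics of the full Vlasov equation remain a small perturbation of the equilibrium ones, so the geometric control condition still applies: there is a fixed time $T_\star$ after which every particle coming from $f_0$ has been absorbed, and for $t\geq T_\star$ the distribution $f(t)$ is entirely slaved to $\psi$ through the perturbed flow. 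Comparing $f(t)$ with $\overline f$ for $t\geq T_\star$ then bounds $\|f(t)-\overline f\|$ by $\|w\|_{L^\infty([T_\star,t];\,\cdot)}$, with a constant that degenerates only near $v_1 = 0$ (excluded by the support condition on $\psi$); feeding this into the coupling force and using the Navier--Stokes dissipation via Gr\"onwall yields the contraction forcing exponential decay of both $\|w(t)\|$ and $\|f(t)-\overline f\|$ (Theorem~\ref{thm-expo}).

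The main obstacle, and the reason the homogeneous condition \eqref{CBu1} is singled out in Remark~\ref{BC0}, is the kinetic analysis near the walls $x_2 = \pm 1$: there $u_p$ vanishes, so the drift cannot be used to evacuate particles, and one must instead show that the transverse component $v_2$ together with the finite width of the pipe still expels particles in uniformly bounded time, and that this survives the perturbation $w$ uniformly up to $\partial\Omega$, including the boundary layer where $|\overline u|$ is small. Obtaining uniform control of the characteristic flow up to the boundary and ruling out any concentration at the walls is the core technical difficulty; with \eqref{Poiautre} in place of \eqref{CBu1} this obstruction would disappear, as noted there.

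\emph{(Local uniqueness.)} Since $\Omega$ is bounded and every $\overline f$ above has compact velocity support, each equilibrium is an admissible time-independent Leray solution with compactly supported datum. If $(\overline u_1,\overline f_1)$ and $(\overline u_2,\overline f_2)$ are two equilibria near $(u_p,0)$, then $(\overline u_1,\overline f_1)$, viewed as a stationary Leray solution with the boundary data of $(\overline u_2,\overline f_2)$ and datum close to it, converges exponentially to $(\overline u_2,\overline f_2)$ by the stability statement; being constant in time it must coincide with it, which gives local uniqueness near $(u_p,0)$ (Corollary~\ref{cor:uni}).
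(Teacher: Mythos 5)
Your overall architecture does coincide with the paper's: equilibria via the characteristics formula (with the Jacobian weight $e^{2\tau}$) and a Schauder fixed point, stability via slaving of the kinetic part to the fluid over a bounded time window plus a Gronwall argument, and local uniqueness as a consequence of stability. But there is a genuine gap at the very point where you justify the geometric hypothesis. You claim that the exit condition holds at $u_p$ as soon as $\psi$ is compactly supported with $v_1$ bounded away from $0$ (which is automatic for a compact subset of $\Gamma^l$), and later that near the walls ``the transverse component $v_2$ together with the finite width of the pipe still expels particles in uniformly bounded time''. Both claims are false. Since $u_p$ has no vertical component, the quantity $x_2+v_2$ is conserved along the characteristics \eqref{Characteristics}: $V_2(t)=v_2e^{-t}$, $X_2(t)=x_2+v_2(1-e^{-t})$, so a particle with $x_2+v_2=1$ approaches the upper wall only asymptotically and never hits it transversally; meanwhile the horizontal forcing $(1-X_2^2)u_{\rm max}$ decays exponentially along such a trajectory, $V_1\to 0$, and $X_1(\infty)$ remains finite, of order $-L+v_1+2u_{\rm max}v_2$, so for $v_1+2u_{\rm max}v_2<2L$ the trajectory never leaves $\overline\Omega$ at all, and near this critical set the exit time blows up, ruling out any uniform bound. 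This is exactly why the paper imposes $|x_2+v_2|\neq 1$ on the support (Lemma~\ref{poigeo}, condition \eqref{propK}), and why the admissible perturbations in Theorem~\ref{thm-expo} must be supported in a compact $K_2$ satisfying the initial EGC: ``appropriately compactly supported'' cannot be weakened to ``compactly supported''. Your uniqueness sketch inherits the same issue: compact support of the competing equilibrium is not by itself enough; one uses instead that its kinetic part is generated from $\psi$ by characteristics of a field close to $u_p$, so that its support does satisfy the required EGC (Lemma~\ref{suppfbar} and the discussion before Corollary~\ref{cor:uni}).

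Two further steps are glossed over, though they are fillable. First, to propagate the exit condition along the evolution you need smallness of $\int_{t-T}^t\|u(s)-u_p\|_{L^\infty(\Omega)}\,ds$ (Lemma~\ref{Lem:Gronwall}); the energy estimate only controls $u-\overline u$ in $L^2$, and passing to $L^1_t L^\infty_x$ is where the parabolic regularization of 2D Leray solutions ($H^2$ estimates and Br\'ezis--Gallou\"et, Proposition~\ref{propo:nrj-def}) is needed. Second, the kinetic bound must be taken over the moving window $[t-T,t]$, not a sup over $[T_\star,t]$ as you wrote: the resulting delayed inequality $y'+\kappa y\leq\alpha\int_{t-T}^t y(s)\,ds$ yields exponential decay only under $\alpha T<\kappa$ (Lemma~\ref{retarded}), which is where the smallness of $\|\nabla_v\overline f\|_\infty$, i.e.\ of $\psi$ in $W^{1,\infty}$, enters; with the sup over the whole past, the argument gives smallness but no decay.
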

Loosely speaking, we therefore prove existence and stability of  nontrivial regular stationary states for the Vlasov-Navier-Stokes system. This follows from key geometric conditions (that we will refer to as \emph{exit geometric conditions}) satisfied by the Poiseuille flow. Roughly speaking, we ask that all associated characteristics emerging from the support of the incoming data $\psi$ leave transversally the domain before a fixed time. 
As some preparation is needed, we have chosen to postpone the statement of the exact definitions.
These exit geometric conditions are somehow reminiscent of the celebrated geometric control condition of Bardos, Lebeau and Rauch \cite{BLR}\footnote{This Geometric Control Condition was introduced in the context of the controllability of the wave equation.}. Also, the term ``appropriately compactly supported'' in the statement means that the compact support of the perturbation has to verify some geometric assumption that will be precisely stated in the course of the paper.
Finally the local uniqueness property concerns a weighted $L^\infty$/Lipschitz space.
\ \par
\bigskip
The structure of this paper is as follows. First, in Section~\ref{Sec:Cauchy} we study the Cauchy problem for the Vlasov-Navier-Stokes system in the pipe. 
More precisely we develop a theory of Leray solutions in this geometry (including several regularity  estimates). 
Next in Section~\ref{Sec:EGC}, we introduce the geometric exit conditions mentioned above and establish some continuity properties related to those. Then, in Section~\ref{Sec:Equilibria} we show the existence of nontrivial stationary solutions of the Vlasov-Navier-Stokes system in the pipe 
with small data.
This first result already confirms that the situation is very different for the case of the whole space. We note that the stationary solutions that we build satisfy the aforementioned exit geometric condition. 
In Section~\ref{Sec:Stab}, we prove the exponential stability of the stationary solutions introduced in Section~\ref{Sec:Equilibria}, provided again that the reference Poiseuille flow is sufficiently small.  As already said, the analysis crucially relies on the exit geometric condition. In the course of the proof, we will use a Gronwall-type lemma for a kind of delayed differential inequality. Several regularity results for the Navier-Stokes equation in 2D are also needed. The local uniqueness property of the stationary solutions built in Section~\ref{Sec:Equilibria} will be a direct consequence of this stability. \par
Finally Section~\ref{Sec:Appendix} is an Appendix where we gather various technical results needed in the proofs.

\vspace{2mm}

\noindent \textbf{Acknowledgments.}
O.G. was partially funded by the French ANR-13-BS01-0003-01 Project DYFICOLTI, D.H.-K. by a PEPS-JC granted by CNRS and A.M. by the French ANR-13-BS01-0004 project KIBORD.

\section{The Cauchy problem for the Vlasov-Navier-Stokes system in the pipe} 
\label{Sec:Cauchy}
For any distribution function $f:\R_+ \times \Omega \times \R^2\rightarrow\R_+$ we define, for all $\alpha \geq 0$, the moments
\begin{align*}
m_\alpha f (s,x) &:= \int_{\R^2} f(s,x,v)|v|^\alpha \, dv,\\
M_\alpha f(s) &:= \int_{\Omega} m_\alpha f(s,x) \, dx, \\
j_f(s,x) &:= \int_{\R^2} f(s,x,v)v  \, dv.
\end{align*}
%
%
We will consider weak solutions of the Vlasov equation~\eqref{Kinetic}, with the boundary conditions \eqref{CBf1}-\eqref{CBf2}, which are precisely defined as follows.
\begin{defi}[Weak solutions for the Vlasov equation]
\label{def:weak}
Given $\psi\in L^\infty(\R_+ \times \Gamma^l)$ an entering distribution, a vector field $u \in L^1(\R_+\times \Omega)$,  we say that a distribution function $f\in L^\infty(\R_+ \times \Omega \times \R^2) \cap \mathscr{C}^0(\R_+;L^\infty(\Omega\times\R^2)-w\star)$ is a weak solution of \eqref{Kinetic} together with boundary conditions \eqref{CBf1}-\eqref{CBf2} and initial data  $f_0 \in L^\infty\cap L^1(\Omega\times\R^2)$ if, for all $\phi\in\mathscr{D}(\R_+ \times \overline{\Omega} \times\R^2)$ vanishing on $\Sigma^+$, one has, for all $T>0$,
\begin{multline*}
\int_0^T \! \int_\Omega \! \int_{\R^2} f[\partial_t \phi + v\cdot \nabla_x \phi + (u-v) \cdot \nabla_v \phi](t,x,v) \, dv \, dx \, dt\\
= \int_\Omega \! \int_{\R^2} f(T,x,v)\phi(T,x,v) \, dv \, dx - \int_\Omega \! \int_{\R^2} f_0(x,v)\phi(0,x,v) \, dv \, dx  \\
+ \int_0^T \! \int_{\Gamma^l} \psi(t,x_2,v) \phi(t,0,x_2,v) v_1 \, dv \, dx_2 \, dt.
\end{multline*}
\end{defi}
%
%
%
We will consider Leray solutions of the full Vlasov-Navier-Stokes system \eqref{Kinetic}-\eqref{NavierStokes2}, with boundary conditions \eqref{CBu1}-\eqref{CBf2}, in the following sense.
\begin{defi}[Leray solutions for the Vlasov-Navier-Stokes system]
\label{def:leray} 
Given $\psi\in L^\infty(\R_+\times\Gamma^l)$ a nonnegative entering distribution with compact support in velocity included in $B(0,R)$,  $u_0\in L^2(\Omega)$ with $\div \, u_0=0$ and $f_0\in L^\infty \cap L^1 (\Omega\times\R^2)$ a nonnegative function such that $M_4 f_0 <\infty$, we say that the couple $(u,f)$ is a Leray solution to the the Vlasov-Navier-Stokes system \eqref{Kinetic}-\eqref{NavierStokes2}, with boundary conditions \eqref{CBu1}-\eqref{CBf2} and initial conditions $(u_0, f_0)$ if
$$ u-u_p \in \mathscr{C}^0(\R_+; L^2(\Omega))\cap L^2_{\textnormal{loc}}(\R_+; H^1_0(\Omega)), $$
$$ f \in \mathscr{C}^0(\R_+; L^\infty(\Omega \times \R^2)-w\star) \cap \mathscr{C}^0(\R_+; L^p(\Omega\times\R^2)) \text{ for any } p<\infty, $$ 
and the following holds. 
The distribution function $f$ is a nonnegative weak solution of the Vlasov equation with force field $u$ in the sense of Definition~\ref{def:weak}
and for all $\Phi \in \mathscr{C}^1(\R_+; H^1_0(\R^2))$ such that $\div \, \Phi = 0$, we have, for $T\geq 0$,
\begin{multline*}
\int_0^T \! \int_\Omega [u \cdot \partial_t \Phi + u \otimes u \, : \, \nabla_x \Phi - \nabla_x u \, : \, \nabla_x \Phi ](t,x) \, dx \, dt \\
= \int_\Omega u(T,x) \cdot \Phi(t,x) dx-\int_\Omega u_0(x) \cdot \Phi(0,x) \, dx \\
- \int_0^T \! \int_\Omega  \int_{\R^{2}} f(t,x,v) (v-u(t,x)) \cdot \Phi(t,x) \, dv \, dx \, dt.
\end{multline*}
Furthermore, denoting 
\begin{equation} \label{DefUTilde}
\widetilde{u}:=u-u_p,
\end{equation}
one has the following energy estimate for all $T \geq 0$ and all $t\in [0,T]$,
\begin{multline} \label{ineq:nrj-def}
\| \widetilde{u}(t) \|_2^2 + M_2 f(t) + \int_0^{t} \| \nabla \widetilde{u}(s) \|_2^2 \, ds 
+ \int_0^{t}\int_{\Omega\times\R^2} f(s,x,v) |u(s,x)-v|^2 \, dv \, dx \, ds \\
\leq C_{\Omega,T,R,u_p}(\|f_0\|_\infty, M_2f_0,\|u_0-u_p\|_2,\|\psi\|_\infty),
\end{multline}
in which $C_{\Omega,T,R,u_p}(\cdot,\cdot,\cdot,\cdot)$ stands for a positive continuous functions that is nondecreasing with respect to each of its arguments.
\end{defi}
The constant $C_{\Omega,T,R,u_p}$ above can actually be made explicit in the proof of existence of such Leray solutions.
\begin{rem}
Notice that we prove here the existence of Leray solutions in the case when $\psi$ depends on the time variable, which is a slight generalization of our original setting. 
\end{rem}
\begin{rem}
The assumptions $f_0 \in L^\infty(\Omega\times \R^2)$ and $M_4 f_0 <\infty$ imply, thanks to the interpolation Lemma~\ref{interpol}, that $M_2 f_0 < \infty$. The assumption of compact support in velocity for the entering distribution is not mandatory here and could be replaced by $M_1 \psi +M_4 \psi \in L^\infty(\R_+)$ for instance.
\end{rem}
\begin{rem}
The Poiseuille flow $u_p$ is included in the energy estimate \eqref{ineq:nrj-def} since it is used as a lifting to handle the non homogeneous boundary conditions.
\end{rem}
The following result gives additional estimates satisfied by Leray solutions.
\begin{prop} \label{propo:nrj-def}
For any $T> 0$, any Leray solution $(u,f)$ in the sense of Definition~\ref{def:leray} is actually such that $u\in L^2_{\textnormal{loc}}(\R_+^*;H^2(\Omega))\cap\mathscr{C}^0(\R_+^*; H^1(\Omega))\cap \mathscr{C}^1(\R_+^* ; L^2(\Omega)) $ and satisfies the following additional estimates on any interval $(a,b)=(a,a+T) \subset \R_{+}^*$ (denoting again $\widetilde{u}=u-u_p$),
\begin{itemize}
\item Shifted energy inequality: for all $t\in [a,b]$, we have
\begin{multline} \label{ineq:nrj}
\| \widetilde{u}(t) \|_2^2 + M_2 f(t) + \int_a^{t} \| \nabla \widetilde{u}(s) \|_2^2 \, ds 
+ \int_a^{t} \! \int_{\Omega\times\R^2} f(s,x,v)|u(s,x)-v|^2 \, dv \, dx \, ds \\
\leq C_{\Omega,T,R,u_p}(\| f(a) \|_\infty, M_2f(a), \| \widetilde{u}(a) \|_2, \| \psi \|_\infty).
\end{multline} 
\item Maximum principle for the distribution function: we have
\begin{align} \label{ineq:max}
\sup_{a \leq s \leq b}\|f(s)\|_\infty &\leq e^{2T}(\|f(a)\|_\infty+ \|\psi\|_\infty).
\end{align}
\item Propagation of moments for the distribution function: we have $M_4f(a) <\infty$ and 
\begin{align} \label{ineq:moments}
\sup_{a \leq s \leq b}M_4 f(s)  &\leq C_{\Omega,T,R}(M_4 f(a),\|f(a)\|_\infty, \|\psi\|_\infty)  D_{\Omega,T,R,u_p}(\|\widetilde{u}(a)\|_2).
\end{align}
\item Regularity estimate: if $c:=\max(a-T/2,0)$,
\begin{align} \label{ineq:ell}
\int_a^b \|\widetilde{u}(s)\|_\infty \, ds \leq C_{\Omega,T,R,u_p}(\|f(c)\|_\infty,M_4 f(c),\|\widetilde{u}(c)\|_2, \|\psi\|_\infty).
\end{align}
\end{itemize}
In all of these estimates, $D_{\Omega,T,R,u_p}$, $C_{T,\Omega,R}$  and $C_{T,\Omega,R,u_p}$ denote generic positive continuous functions  (that may vary from line to line in the proof below) nondecreasing with respect to each of their arguments, the second and third one furthermore vanishing at $0$.
\end{prop}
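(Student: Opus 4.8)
The plan is to derive all four estimates from the already-established energy inequality \eqref{ineq:nrj-def} together with standard parabolic regularization for the 2D Navier-Stokes equations, bootstrapped through the coupling term. The key observation is that on a shifted interval $(a,b) \subset \R_+^*$, one may apply \eqref{ineq:nrj-def} with initial time $a$ in place of $0$ (using that the restriction of a Leray solution to $[a,+\infty)$ is again a Leray solution with data $(u(a),f(a))$, after subtracting the time-independent lift $u_p$); this gives immediately the shifted energy inequality \eqref{ineq:nrj}, modulo checking that $f(a) \in L^\infty$ and $M_2 f(a) < \infty$, which follow from the regularity class in Definition~\ref{def:leray}. The maximum principle \eqref{ineq:max} is the most elementary: along the characteristics of the Vlasov equation $\dot x = v$, $\dot v = u - v$, the distribution function satisfies $\frac{d}{ds}(f \circ \text{flow}) = 2 f \circ \text{flow}$ (the $\div_v(v\cdot)$ term contributing the factor $2$ in dimension $2$), so $f(s)$ along a characteristic starting at time $a$ is at most $e^{2(s-a)}$ times its initial value, which is either $f(a)$ (interior characteristics) or $\psi$ (characteristics entering through $\Gamma^l$); taking the sup gives \eqref{ineq:max}. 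One should present this via the weak formulation / a standard approximation argument rather than literally differentiating, but the mechanism is exactly this.

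For the propagation of moments \eqref{ineq:moments}, I would write the ODE for $M_4 f$. Multiplying the Vlasov equation by $|v|^4$ and integrating, the transport term $v\cdot\nabla_x$ contributes only a (nonpositive) boundary flux, and the friction/drag term $\div_v((u-v)f)$ gives, after integration by parts, $-4 M_4 f + 4\int f |v|^2 v \cdot u\, dv\, dx$. Bounding $\big|\int f|v|^2 v\cdot u\big| \leq \|u(s)\|_\infty M_3 f(s)$ and interpolating $M_3 f \lesssim M_4 f^{3/4} M_0 f^{1/4}$ (Lemma~\ref{interpol}), with $M_0 f$ controlled by $\|f\|_\infty$ and the finite measure of $\Omega$, one gets a differential inequality of the form $\frac{d}{ds} M_4 f \leq -4 M_4 f + C(1 + \|u(s)\|_\infty) M_4 f^{3/4}(\cdots)$, i.e.\ $\frac{d}{ds}(M_4 f)^{1/4} \lesssim C(1+\|u(s)\|_\infty)(\cdots)$. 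Integrating over $[a,b]$ then requires $\int_a^b \|u(s)\|_\infty\, ds < \infty$ — which is precisely \eqref{ineq:ell} — closing the estimate, with $\|u\|_\infty \leq \|\widetilde u\|_\infty + \|u_p\|_\infty$. A small technical point: one must first justify $M_4 f(a) < \infty$; here I would argue that $\|\widetilde u\|_\infty$ is integrable on the earlier interval $(c,a)$ (again \eqref{ineq:ell} shifted) and run the same Gronwall argument starting from $c$, using $M_4 f(c) < \infty$, which itself follows from $M_4 f_0 < \infty$ and the same propagation on $(0,c)$ — this is where the dependence on $f(c)$, $\widetilde u(c)$ in \eqref{ineq:moments}, \eqref{ineq:ell} comes from.

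The regularity estimate \eqref{ineq:ell} is the heart of the matter and the main obstacle. The equation for $\widetilde u = u - u_p$ is a 2D Navier-Stokes system with forcing $F := j_f - m_0 f\, u - (u_p\cdot\nabla)u_p$; I would bound $\|F(s)\|_2$ using $\|j_f\|_2, \|m_0 f\|_2 \lesssim C(\|f\|_\infty, M_4 f)$ (moments with weight, via interpolation) plus $\|m_0 f\, u\|_2 \lesssim \|m_0 f\|_\infty \|u\|_2$ and the smooth, bounded $u_p$. The strategy is: (i) a first parabolic step giving $\widetilde u \in L^2_{\text{loc}}(\R_+^*; H^2) \cap \mathscr C^0(\R_+^*; H^1)\cap\mathscr C^1(\R_+^*; L^2)$ by testing the equation against $-\Delta\widetilde u$ (or $\partial_t\widetilde u$) over a slightly shifted subinterval, absorbing the quadratic term $(u\cdot\nabla)u$ via Ladyzhenskaya's inequality $\|u\|_4^2 \lesssim \|u\|_2\|\nabla u\|_2$ — this is the standard 2D Navier-Stokes $H^2$-regularization and uses only the $L^2_tH^1_x \cap L^\infty_t L^2_x$ bound from \eqref{ineq:nrj}; (ii) then $\|\widetilde u(s)\|_\infty \lesssim \|\widetilde u(s)\|_{H^2}$ (2D Sobolev embedding, on a smooth bounded domain), so $\int_a^b\|\widetilde u\|_\infty \leq |b-a|^{1/2}\|\widetilde u\|_{L^2_tH^2}$, and the $L^2_t H^2$ bound is the output of step (i) in terms of data at time $c = \max(a-T/2,0)$. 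The subtlety — and the reason the estimate is stated with the earlier time $c$ — is that parabolic smoothing needs a time margin: one cannot bound $\widetilde u(a)$ in $H^1$ by $\widetilde u(a)$ in $L^2$, so one pays a half-interval of backward time to gain the needed regularity. One must also be careful that the forcing $F$ involves $M_4 f$, so steps (i)–(ii) and the moment bound \eqref{ineq:moments} are genuinely coupled and should be run together (or iterated): one uses \eqref{ineq:moments} on $(c,b)$ — which needs $\int_c^b\|\widetilde u\|_\infty$, hence \eqref{ineq:ell} on a doubly-shifted interval — feeding into \eqref{ineq:ell} on $(a,b)$; tracking this finite chain of shifts and the fact that all constants are nondecreasing and (for the NS constants) vanish at $0$ is the bookkeeping that makes the proof go through. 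All constants come out explicitly from Ladyzhenskaya, Gronwall, and the Sobolev/trace constants on the fixed domain $\Omega$.
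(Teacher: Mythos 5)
Your overall architecture (shifted energy, maximum principle, $M_4$ propagation, parabolic regularization plus a Sobolev-type bound) matches the paper's, but two steps as proposed do not close. The crucial one is the moment estimate: you bound the drag contribution by $\|u(s)\|_\infty M_3 f(s)$, so your Gronwall argument for $M_4 f$ requires $\int \|u\|_\infty\,ds$ on the interval, i.e.\ \eqref{ineq:ell}; but \eqref{ineq:ell} requires the forcing $j_f-(m_0f)u$ in $L^2_tL^2_x$, hence $M_4 f$. You propose to untangle this by iterating over backward shifts of length $T/2$, but the chain cannot bottom out: on the first interval touching $t=0$ the only available information is $u_0\in L^2$ and the energy bound of Definition~\ref{def:leray}, and $\int_0^{T_0}\|u\|_\infty\,ds$ is not controlled there (parabolic smoothing, which would give it, itself needs the $L^2_tL^2_x$ forcing, hence the moments). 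The paper breaks the circle by propagating $M_4 f$ using only energy-level information: the drag term is estimated by $\|m_3 f\|_{6/5}\|u\|_{L^6}$, interpolation (Lemma~\ref{interpol} with $\beta=3$, $\gamma=4$, i.e.\ Corollary~\ref{coro:vla}) closes the Gronwall argument with $u\in L^2(0,T;L^6(\Omega))$ only, which follows from \eqref{ineq:nrj-def} and the embedding $H^1\hookrightarrow L^6$. Only after that are the forcing bound, Theorem~\ref{thm:parabo} and \eqref{ineq:ell} obtained. You should replace your $L^\infty$-based drag bound by this $L^6$-based one; otherwise the scheme is circular at its base.

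The second gap is the case $a\le T/2$ (so $c=0$) in \eqref{ineq:ell}. Your route is $\int_a^b\|\widetilde u\|_\infty\le |b-a|^{1/2}\|\widetilde u\|_{L^2(a,b;H^2)}$ with the $L^2_tH^2$ bound "in terms of data at time $c$''; but at $c=0$ the data is only $u_0\in L^2$, so Theorem~\ref{thm:parabo} yields only the weighted bound $\int_0^b s\|\widetilde u(s)\|_{H^2}^2\,ds$, and extracting $\|\widetilde u\|_{L^2(a,b;H^2)}$ from it costs a factor $a^{-1/2}$ that blows up as $a\downarrow 0$, whereas \eqref{ineq:ell} demands a constant uniform over all intervals $(a,a+T)\subset\R_+^*$ depending only on the norms at time $c$. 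The paper handles this limiting case with the Br\'ezis--Gallou\"et inequality (Lemma~\ref{lem:bg}) combined with concavity of the logarithm, paying only $\log\big(C+\int_0^b s\|\widetilde u\|_{H^2}^2\,ds\big)$; some substitute of this type (e.g.\ an Agmon-type interpolation against the weighted norm) is indispensable. Two smaller points: your justification of \eqref{ineq:nrj} by "the restriction of a Leray solution is again a Leray solution with data $(u(a),f(a))$'' is itself the statement to be proven (the definition only provides the inequality from time $0$); the paper instead subtracts the weak formulations at times $a$ and $t$ and tests with $\widetilde u$, which is legitimate only after the $\mathscr{C}^1(\R_+^*;L^2)\cap\mathscr{C}^0(\R_+^*;H^1)$ regularity has been established. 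And in the $M_4$ balance the boundary flux is not nonpositive: the entering part on $\Gamma^l$ contributes a positive term, bounded using $\|\psi\|_\infty$ and the support radius $R$, which is where those dependencies in the constants come from.
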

\begin{rem}
In view of the upcoming stability part, it is crucial to note that the constants $C_{\Omega,T,R}$ and $D_{\Omega,T,R,u_p}$ do not depend on $a,b$ but only on $T=b-a$.
\end{rem}
\begin{proof}[Proof of Proposition~\ref{propo:nrj-def}]
The proof will use several results gathered in the Appendix (Section~\ref{Sec:Appendix}).
Let us first notice that since $(u,f)$ is a Leray solution, we have $u \in L^2_{\textnormal{loc}}(\R_+;L^6(\Omega))$ by the Sobolev embedding $H^1(\Omega)\hookrightarrow L^6(\Omega)$, so that Corollary~\ref{coro:vla} applies and we have in particular $M_4 f \in L^\infty_{\textnormal{loc}}(\R_+)$ because $M_4 f_0 <\infty$.
Since $f\in L^\infty_{\textnormal{loc}}(\R_+; L^\infty(\Omega\times \R^2))$, using Lemma~\ref{interpol} we get $m_1 f \in L^\infty_{\textnormal{loc}}(\R_+; L^2(\Omega))$ and $m_0 f \in L^\infty_{\textnormal{loc}}(\R_+;L^3(\Omega))$. More precisely if $b=a+T$, we have
\begin{align} \label{ineq:sourcel2}
\|j_f - (m_0 f)u\|_{L^2(a,b;L^2(\Omega))} \leq C_\Omega(\|M_4 f\|_{L^\infty(a,b)},\|f\|_{L^\infty(a,b;L^\infty(\Omega\times\R^2))}) ( 1+ \|u\|_{L^2(a,b;L^6(\Omega))}),
\end{align}
for some continuous function $C_\Omega$ vanishing at $0$ (increasing with respect to both of its arguments).
This means in particular that $u$ solves the Navier-Stokes equation on $\R_+\times\Omega$ with a source term in $L^2_{\textnormal{loc}}(\R_+;L^2(\Omega))$, so that the parabolic regularization over time satisfied by this equation (and stated in  Theorem~\ref{thm:parabo}) applies and we indeed get $u\in L^2_{\textnormal{loc}}(\R_+^*;H^2(\Omega))\cap \mathscr{C}^0(\R_+^*;H^1(\Omega))\cap \mathscr{C}^1(\R_+^*;L^2(\Omega))$. \par
\ \par
\noindent $\bullet$ Now let us prove \eqref{ineq:nrj} for $a>0$ (the case $a=0$ is in fact a consequence of Definition~\ref{def:leray}). For any $t\in[a,b]=[a,a+T]$, taking the difference of the two weak formulations of Definition~\ref{def:leray} at time $t$ and time $a$, we get 
\begin{multline*}
\int_a^t \! \int_\Omega [u \cdot \partial_t \Phi + u \otimes u \, : \, \nabla_x \Phi - \nabla_x u \, : \, \nabla_x \Phi ](s,x) \, dx \, ds \\
= \int_\Omega u(t,x) \cdot \Phi(t,x) dx-\int_\Omega u(a,x) \cdot \Phi(a,x) \, dx \\
- \int_a^t \! \int_\Omega  \int_{\R^{2}} f(s,x,v) (v-u(s,x)) \cdot \Phi(s,x) \, dv \, dx \, ds.
\end{multline*}
Since $\widetilde{u}:=u-u_p$ belongs to $\mathscr{C}^1(\R_+^*;L^2(\Omega))\cap L^2_{\textnormal{loc}}(\R_+;H^1_0(\Omega))$ a straightforward density argument implies 
\begin{multline*}
\int_a^t \! \int_\Omega [u \cdot \partial_t \widetilde{u} + u \otimes u \, : \, \nabla_x \widetilde{u} - \nabla_x u \, : \, \nabla_x \widetilde{u} ](s,x) \, dx \, ds \\
= \int_\Omega u(t,x) \cdot \widetilde{u}(t,x) dx-\int_\Omega u(a,x) \cdot \widetilde{u}(a,x) \, dx \\
- \int_a^t \! \int_\Omega  \int_{\R^{2}} f(s,x,v) (v-u(s,x)) \cdot \widetilde{u}(s,x) \, dv \, dx \, ds.
\end{multline*}
Since $u_p$ is a stationary solution of Navier-Stokes, 
we infer after some integration by parts (using $\text{div}\,u = 0$)
\begin{multline}  \label{eq:nrjflu}
\frac{1}{2} \|\widetilde{u}(t)\|_2^2 + \int_a^t \|\nabla \widetilde{u}(s)\|_2^2 \, ds
= \int_a^t \! \int_{\Omega} (j_f -(m_0 f)u )(s,x)\cdot \widetilde{u}(s,x) \, dx \, ds \\
-\int_a^t \! \int_\Omega \big[(\widetilde{u} \cdot\nabla) u_p \big]\cdot \widetilde{u}(s,x) \,dx \, ds+ \frac{1}{2}\|\widetilde{u}(a)\|_2^2.
\end{multline}
Now it may be checked  that $(t,x,v)\mapsto f(a+t,x,v)$ is solution of \eqref{Kinetic} in the sense of Definition~\ref{def:weak} with initial condition $f(a)\in L^\infty(\Omega\times\R^2)$ (notice that $M_2 f(a) <\infty$) and with the vector field $(t,x)\mapsto u(a+t,x)$. We may invoke the moments estimate of Theorem~\ref{thm:vla} (with $\chi(z)=z$) to get
\begin{equation} \label{eq:momentkin1}
M_0 f (t) = M_0 f(a) - \int_a^t \int_{\Gamma^l} \psi(s,x,v) v\cdot n(x) \,dv\,dx\,ds,
\end{equation}
and (taking $\alpha=2$)
\begin{multline} \label{eq:momentkin2}
\frac{1}{2}M_2 f(t)  + \frac{1}{2} \int_a^t M_2f(s) \, ds  = \int_a^t \int_{\Omega\times\R^2} j_f \cdot u(s,x) \, dx \, ds + \frac{1}{2}M_2 f_a \\
- \frac{1}{2}\int_a^t \int_{\Gamma^l} \psi(s,x)|v|^2 v\cdot n(x) \, dx \, dv.
\end{multline}
%
Summing  \eqref{eq:nrjflu} and \eqref{eq:momentkin2} we get for all $t\in[a,b]$
\begin{multline*}
E(f(t),u(t)) +\int_a^t \|\nabla \widetilde{u}(s)\|_2^2 \, ds + \frac{1}{2}\int_a^t \int_{\Omega\times\R^2} f(s,x,v)|u(s,x)-v|^2 \, dv \, dx \, ds \\
= \int_a^t \! \int_{\Omega} ((m_0 f) u -j_f)(s,x) \cdot u_p(x) \, dx \, ds 
 - \int_a^t \! \int_\Omega \big[(\widetilde{u} \cdot\nabla) u_p\big] \cdot \widetilde{u}(s,x) \, dx \, ds \\
 - \frac{1}{2} \int_a^t \! \int_{\Gamma^l} \psi(s,x,v)|v|^2 v\cdot n(x) \, dx \, dv
 + E(f(a),\widetilde{u}(a)),
\end{multline*}
where $2 E(f(t),u(t)) = \|\widetilde{u}(t)\|_2^2 + M_2 f(t)$. Recalling that  $\text{Supp}_v(\psi)\subset B(0,R)$ and that $u_p$ is Lipschitz, we obtain
\begin{multline*}
E(f(t),u(t)) +\frac{1}{2}\int_a^t \|\nabla \widetilde{u}(s)\|_2^2 \, ds 
+ \frac{1}{2} \int_a^t \int_{\Omega\times\R^2} f(s,x,v) |u(s,x)-v|^2 \, dv \, dx \, ds \\
\leq C_{\Omega,T,R}(\|\psi\|_\infty) + E(f(a),\widetilde{u}(a)) 
+ \|\nabla u_p\|_\infty  \int_a^t \| \widetilde{u} \|_{2}^2 \, ds \\
+ \int_a^t \int_{\Omega} ((m_0 f) u -j_f)(s,x) \cdot u_p(x) \, dx \, ds.
\end{multline*}
Now using Young's inequality together with the nonnegativity of $f$, we have 
\begin{equation*}
f(u-v)  \cdot u_p  \leq \frac{1}{4} f|u-v|^2  + {2}f |u_p|^2,
\end{equation*} 
so that
\begin{multline*}
E(f(t),u(t)) + \frac{1}{2} \int_a^t \|\nabla \widetilde{u}(s)\|_2^2 \, ds 
+ \frac{1}{4} \int_a^t \! \int_{\Omega\times\R^2} f(s,x,v)|u(s,x)-v|^2 \, dv \, dx \, ds \\
\leq C_{\Omega,T,R}(\|\psi\|_\infty) + E(f(a),\widetilde{u}(a)) + \|\nabla u_p\|_\infty\int_a^t \| \widetilde{u} \|_{2}^2 \,ds \\
+ {2} \|u_p\|_\infty \int_a^t \! \int_\Omega m_0 f(s,x) \, dx \, ds.
\end{multline*}
But using \eqref{eq:momentkin1} we get
\begin{equation*}
M_0f(t) \, = M_0 f(a)  -  \int_a^t \int_{\Gamma^l} \psi(s,x,v) v\cdot n(x) \, dx  \, dv,
\end{equation*}
and since $M_0 f(a) \leq C \|f(a)\|_\infty + M_2 f(a)$, changing the definition of $C_{\Omega,T,R}$ we may write 
\begin{multline*}
E(f(t),u(t)) + \frac{1}{2} \int_a^t \|\nabla \widetilde{u}(s)\|_2^2 \, ds 
+ \frac{1}{4}\int_a^t \int_{\Omega\times\R^2} f(s,x,v)|u(s,x)-v|^2 \, dv \, dx \, ds \\
\leq C_{\Omega,T,R,u_p}(\|\psi\|_\infty,\|f(a)\|_\infty,M_2 f(a),\|\widetilde{u}(a)\|_2) 
+ \|\nabla u_p\|_\infty \int_a^t \| \widetilde{u} \|_{2}^2 \,ds,
\end{multline*}
so that \eqref{ineq:nrj} follows from Gronwall's lemma. \par
\ \par
\noindent $\bullet$ Recall that $(t,x,v)\mapsto f(a+t,x,v)$ solves the Vlasov equation with field $(t,x)\mapsto u(a+t,x)$ and initial condition $f(a)$.
In particular, estimate \eqref{ineq:max} is a direct consequence of the maximum principle on $[0,T]$, and estimate of Corollary~\ref{coro:vla} rewrites here as (since $M_4 f(a) <\infty$)
\begin{equation*}
M_4 f(t) \leq C_{\Omega,T,R}(M_4 f(a),\|f(a)\|_\infty, \|\psi\|_\infty)  D_T(\|u\|_{L^2(a,b;L^6(\Omega))}),
\end{equation*}
so that using the estimate \eqref{ineq:nrj} that we have just proved (with the Sobolev embedding $H^1(\Omega)\hookrightarrow L^6(\Omega)$),
we recover \eqref{ineq:moments} noticing that $M_2 f(a) \leq C \|f(a)\|_\infty + M_4f(a)$. \par
\ \par
\noindent $\bullet$ It now remains to treat \eqref{ineq:ell}. Note that $\widetilde{u}$ solves 
\begin{equation*}
\partial_{t} \widetilde{u} - \Delta \widetilde{u} + \nabla \widetilde{p}
= - (u_p \cdot \nabla) \widetilde{u} - (\widetilde{u} \cdot \nabla) u_p + j_{f} - (m_0 f) u =:F,
\end{equation*}
where $\widetilde{p} = p-q$ with $q$ standing for the pressure associated to the Poiseuille flow $u_p$.
Using a time-translation argument, we infer from Theorem~\ref{thm:parabo} of the Appendix that
\begin{equation}
\label{eq-un}
\int_c^b (s-c) \|\widetilde{u}(s)\|_{H^2(\Omega)}^2 \, ds \leq C_{T,\Omega}\Big(\|u(a)\|_{L^2(\Omega)},\|F\|_{L^2(c,b;L^2(\Omega))}\Big).
\end{equation}
Let us assume for one moment that 
\begin{align}
\label{ineq:assF} \|F\|_{L^2(c,b;L^2(\Omega))} \leq C_{\Omega,T,R,u_p}(\|f(c)\|_\infty,M_4 f(c),\|\widetilde{u}(c)\|_2, \|\psi\|_\infty).
\end{align}
Then from estimate \eqref{ineq:nrj}, we infer a similar control for $\|u(a)\|_{L^2(\Omega)}$, so that from \eqref{eq-un} we get 
\begin{align*}
\int_c^b (s-c) \|\widetilde{u}(s)\|_{H^2(\Omega)}^2 \, ds \leq C_{\Omega,T,R,u_p}(\|f(c)\|_\infty,M_4 f(c),\|\widetilde{u}(c)\|_2, \|\psi\|_\infty).
\end{align*}
If $a>T/2$, that is $c=a-T/2$, estimate \eqref{ineq:ell} follows directly because 
\begin{align*}
\int_a^b \|\widetilde{u}(s)\|_{H^2(\Omega)}^2 ds \leq \frac{2}{T}\int_a^b(s-c) \|\widetilde{u}(s)\|_{H^2(\Omega)}^2 ds \leq  \frac{2}{T}C_{\Omega,T,R,u_p}(\|f(c)\|_\infty,M_4 f(c),\|\widetilde{u}(c)\|_2, \|\psi\|_\infty),
\end{align*}
and we get \eqref{ineq:ell} by the (two-dimensional) Sobolev embedding $H^2(\Omega)\hookrightarrow L^\infty(\Omega)$ and Cauchy-Schwarz inequality. If $a\leq T/2$, we have $c=0$ and we can proceed as follow. Thanks to the Br\'ezis-Gallo\"uet inequality (see Lemma \ref{lem:bg} of the Appendix), we have for almost all $s\in(a,b)$ 
\begin{align*}
\|\widetilde{u}(s)\|_{L^\infty(\Omega)} &\leq C_\Omega \|\widetilde{u}(s)\|_{H^1(\Omega)}\Big[1+\sqrt{\log(1+2\|\widetilde{u}(s)\|_{H^2(\Omega)})-\log\|\widetilde{u}(s)\|_{H^1(\Omega)}}\,\Big]\\
&\leq  C_\Omega \|\widetilde{u}(s)\|_{H^1(\Omega)}\Big[1+\sqrt{\log(1+2\|\widetilde{u}(s)\|_{H^2(\Omega)})}+\sqrt{|\log\|\widetilde{u}(s)\|_{H^1(\Omega)}|}\,\Big].
\end{align*}
Since there is a constant $C>0$ such that $z\sqrt{|\log(z)|}\leq C(z^2+\sqrt{z})$ for all $z \geq 0$, changing  the constant $C_\Omega$ if necessary eventually leads to
\begin{align*}
\|\widetilde{u}(s)\|_{L^\infty(\Omega)} &\leq C_\Omega \|\widetilde{u}(s)\|_{H^1(\Omega)}\Big[1+\sqrt{\log(1+2\|\widetilde{u}(s)\|_{H^2(\Omega)})}\,\Big] + C_\Omega(\|\widetilde{u}(s)\|_{H^1(\Omega)}^2 + \|\widetilde{u}(s)\|_{H^1(\Omega)}^{1/2}).
\end{align*}
Using Cauchy-Schwarz inequality, we thus get
\begin{align*}
\|\widetilde{u}\|_{L^1(a,b;L^\infty(\Omega))} \leq  C_{\Omega,T}(\|\widetilde{u}\|_{L^2(a,b;H^1(\Omega))})\left[1+\left(\int_a^b \log(1+2 \|\widetilde{u}(s)\|_{H^2(\Omega)}) ds\right)^{1/2}\right],
\end{align*}
for some increasing continuous function $C_{\Omega,T}$ vanishing at $0$. 
Now,  write by concavity of the logarithm
\begin{align*}
\int_a^b \log(1+2\|\widetilde{u}(s)\|_{H^2(\Omega)}) ds &=\int_a^b \log(s+2s\|\widetilde{u}(s)\|_{H^2(\Omega)}) ds -\int_a^b \log(s) ds\\
&\leq T \log\left(\frac{1}{T}\int_a^b(s+2s\|\widetilde{u}(s)\|_{H^2(\Omega)}) ds \right) -\int_a^b \log(s) ds\\
&\leq C_T\log\left (C_T + \int_0^{b}\|s\widetilde{u}(s)\|_{H^2(\Omega)}^2 ds\right) +C_T,
\end{align*}
where we used $a\leq T/2$ whence $b\leq 3T/2$. Using Theorem~\ref{thm:parabo} we (replacing $T$ by $b\leq 3T/2$ in the statement) hence get 
\begin{align*}
\int_a^b \log(1+2\|\widetilde{u}(s)\|_{H^2(\Omega)}) ds \leq D_{T,\Omega}(\|u(0)\|_{L^2(\Omega)},\|F\|_{L^2((c,b)\times\Omega)}),
\end{align*}
where $D_{T,\Omega}$ is some continuous function nondecreasing with respect to each of its arguments. Using the (yet to prove) estimate \eqref{ineq:assF}, estimate \eqref{ineq:ell} is then straightforward. 

\vspace{2mm}

To conclude, let us prove \eqref{ineq:assF}. We first have 
\begin{equation*}
\|(u_p \cdot \nabla)\widetilde{u} + (\widetilde{u}\cdot\nabla)u_p\|_{L^2(c,b;L^2(\Omega))}
\leq C_{T,\Omega}(\|u_p\|_{W^{1,\infty}}) \|\widetilde{u}\|_{L^2(c,b;H^1(\Omega))},
\end{equation*}
so that using \eqref{ineq:nrj} (with the convention $C_{\frac{3T}{2},\Omega,R} \simeq C_{T,\Omega,R}$), we get 
\begin{equation}
\label{eq-deux}
\|(u_p \cdot \nabla)\widetilde{u}+(\widetilde{u}\cdot\nabla)u_p\|_{L^2(c,b;L^2(\Omega))}
\leq D_{\Omega,T}(\|u_p\|_{W^{1,\infty}}) C_{\Omega,T,R}(\|f(c)\|_\infty,M_2f(c),\|\widetilde{u}(c)\|_2, \|\psi\|_\infty).
\end{equation}
Recall estimate \eqref{ineq:sourcel2} that we invoke here on $[c,b]$ to get 
\begin{equation*}
\|j_f - (m_0 f)u\|_{L^2(c,b;L^2(\Omega))} \leq C_\Omega(\|M_4 f\|_{L^\infty(c,b)},\|f\|_{L^\infty(c,b;L^\infty(\Omega\times\R^2))}) ( 1+ \|u\|_{L^2(c,b;L^6(\Omega))}).
\end{equation*}
Using the already proved estimates \eqref{ineq:max} and \eqref{ineq:moments} we may write 
\begin{equation} \label{ineq:sourcel2bis}
\|j_f - (m_0 f)u\|_{L^2(c,b;L^2(\Omega))}
\leq C_{\Omega,T,R,u_p}(M_4f(c),\|f(c)\|_\infty,\|\widetilde{u}(c))\|_2, \|\psi\|_\infty)  ( 1+ \|u\|_{L^2(c,b;L^6(\Omega))}).
\end{equation}
On the other hand, thanks to estimate \eqref{ineq:nrj} and the Sobolev embedding $H^1(\Omega) \hookrightarrow L^6(\Omega)$, we have 
\begin{align*}
\|u\|_{L^2(c,b;L^6(\Omega))} & \leq \|\widetilde{u}\|_{L^2(c,b;L^6(\Omega))} + D_{\Omega,T}(\|u_p\|_{W^{1,\infty}}) \\
&\leq D_{\Omega,T}(\|u_p\|_{W^{1,\infty}}) \Big[C_{\Omega,T,R}(\|f(c)\|_\infty,M_2f(c),\|\widetilde{u}(c)\|_2, \|\psi\|_\infty) + 1 \Big] \\
&\leq D_{\Omega,T}(\|u_p\|_{W^{1,\infty}}) \Big[C_{\Omega,T,R}(\|f(c)\|_\infty,M_4f(c),\|\widetilde{u}(c)\|_2, \|\psi\|_\infty) + 1 \Big],
\end{align*}
where in the last inequality we used once more $M_2f(c)\leq C \|f(c)\|_\infty + M_4f(c)$. Plugging the last estimate in \eqref{ineq:sourcel2bis} and using the already proved estimates \eqref{ineq:max} and \eqref{ineq:moments}, we 
end up with an estimate for $j_f - (m_0 f)u$ which allows to justify \eqref{ineq:assF}.
\end{proof}
The main result of this section is the following theorem.
\begin{thm}[Existence of Leray solutions] \label{thm:weak}
Let $R>0$. 
Fix $\psi \in L^{\infty}(\R_+\times \Gamma^l )$ a nonnegative entering distribution, with support in velocity included in $B(0,R)$. Consider $u_0 \in L^2(\Omega)$ with $\div \, u_0 =0$ and $f_0\in L^\infty(\Omega \times \R^2)$, a nonnegative distribution function such that $M_4 f_0 <\infty$. Then there exists a  Leray solution $(u,f)$ to the system \eqref{Kinetic}-\eqref{NavierStokes2} and boundary conditions \eqref{CBu1}-\eqref{CBf2} with  initial data $(u_0,f_0)$. 
\end{thm}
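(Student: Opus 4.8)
The plan is to construct the solution by a fixed-point/compactness scheme in which the Vlasov equation and the Navier-Stokes equation are solved alternately, with a regularization of the coupling that is removed at the end. First I would set up an iterative scheme: given a divergence-free vector field $u^{(n)}$ (say in $L^2_{\textnormal{loc}}(\R_+;H^1_0(\Omega)) + u_p$ after lifting, and with the regularity furnished by the parabolic theory of Theorem~\ref{thm:parabo}), solve the linear transport equation \eqref{Kinetic} with force field $u^{(n)}$, boundary data $\psi$ on $\Gamma^l$, homogeneous on the rest of $\Sigma^-$, and initial datum $f_0$; this is exactly the linear Vlasov problem whose well-posedness, maximum principle, moment propagation ($M_4$) and the moment identities \eqref{eq:momentkin1}--\eqref{eq:momentkin2} are recalled in the Appendix (Theorem~\ref{thm:vla}, Corollary~\ref{coro:vla}). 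This produces $f^{(n+1)} \geq 0$ with $\|f^{(n+1)}\|_\infty \leq e^{2T}(\|f_0\|_\infty + \|\psi\|_\infty)$ and $M_4 f^{(n+1)}$ controlled by $M_4 f_0$, $\|f_0\|_\infty$, $\|\psi\|_\infty$ and $\|u^{(n)}\|_{L^2(0,T;L^6)}$ via Corollary~\ref{coro:vla}. Then form the drag force $F^{(n+1)} := j_{f^{(n+1)}} - (m_0 f^{(n+1)})\,u$, which by the interpolation Lemma~\ref{interpol} and estimate \eqref{ineq:sourcel2} lies in $L^2_{\textnormal{loc}}(\R_+;L^2(\Omega))$, and solve the Navier-Stokes system \eqref{NavierStokes1}--\eqref{NavierStokes2} with boundary data $u_p$, initial datum $u_0$, and this source, obtaining $u^{(n+1)}$. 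In dimension $2$ Navier-Stokes with an $L^2_tL^2_x$ source is well-posed (Leray + uniqueness in 2D, plus the parabolic smoothing of Theorem~\ref{thm:parabo}), so the map $u^{(n)} \mapsto u^{(n+1)}$ is well defined. A subtlety: to keep $m_0 f^{(n+1)}\, u$ linear and well-controlled one may prefer to solve Navier-Stokes with the friction term $\int f^{(n+1)}(v - u)\,dv$ treated implicitly in the zeroth-order part, i.e. add $-(m_0 f^{(n+1)}) u$ to the left-hand side as a nonnegative damping; this is harmless and actually helps the energy estimate.

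The second step is to derive uniform-in-$n$ a priori bounds independent of the iteration. The key is to reproduce, at the level of the scheme, the energy computation already carried out in the proof of Proposition~\ref{propo:nrj-def}: summing the fluid energy identity \eqref{eq:nrjflu} and the kinetic second-moment identity \eqref{eq:momentkin2}, using Young's inequality on the cross term $f(u-v)\cdot u_p$ and Gronwall's lemma, yields the energy estimate \eqref{ineq:nrj-def} with a constant $C_{\Omega,T,R,u_p}$ depending only on the data, uniformly in $n$. Combined with the maximum principle and the $M_4$ propagation, this gives: $\widetilde{u}^{(n)}$ bounded in $L^\infty(0,T;L^2)\cap L^2(0,T;H^1_0)$, $f^{(n)}$ bounded in $L^\infty((0,T)\times\Omega\times\R^2)$ with $M_4 f^{(n)}$ bounded in $L^\infty(0,T)$, and hence (again via Lemma~\ref{interpol} and \eqref{ineq:sourcel2}) $F^{(n)}$ bounded in $L^2(0,T;L^2)$, which feeds back into Theorem~\ref{thm:parabo} to give $u^{(n)}$ bounded in $L^2_{\textnormal{loc}}(\R_+^*;H^2)\cap \mathscr{C}^0(\R_+^*;H^1)$ and $\partial_t u^{(n)}$ bounded in $L^2_{\textnormal{loc}}(\R_+^*;L^2)$.

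The third step is passage to the limit by compactness. From the bounds above, Aubin–Lions gives strong convergence of (a subsequence of) $u^{(n)}$ in $L^2_{\textnormal{loc}}(\R_+;L^2(\Omega))$ and, with the $H^2$ bound away from $t=0$, in $L^2_{\textnormal{loc}}(\R_+^*;H^1(\Omega))$ hence (2D Sobolev) in $L^2_{\textnormal{loc}}(\R_+^*;L^q)$ for all $q<\infty$ and even $L^1_{\textnormal{loc}}(\R_+^*;L^\infty)$ via the Br\'ezis–Gallou\"et argument used in Proposition~\ref{propo:nrj-def}. For the kinetic part, the uniform $L^\infty$ and $M_4$ bounds give weak-$\star$ compactness of $f^{(n)}$ in $L^\infty$, and the transport structure together with velocity-moment control provides (via averaging lemmas, or more elementarily via the explicit characteristic representation once $u^{(n)}\to u$ strongly in $L^1_tL^\infty_x$) strong convergence of $f^{(n)}$ in $\mathscr{C}^0(\R_+;L^p_{\textnormal{loc}})$ and of the moments $m_0 f^{(n)}, j_{f^{(n)}}$ in $L^2_{\textnormal{loc}}$. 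These convergences are precisely what is needed to pass to the limit in the nonlinear terms $u^{(n)}\otimes u^{(n)}$, $(m_0 f^{(n)})u^{(n)}$, $j_{f^{(n)}}$ and in the weak formulation of the Vlasov equation (Definition~\ref{def:weak}), including the boundary term on $\Gamma^l$, which is linear in $\psi$ and untouched by the limit. The limit $(u,f)$ then satisfies both weak formulations of Definition~\ref{def:leray}; nonnegativity of $f$ passes to the limit; and the energy estimate \eqref{ineq:nrj-def} holds by lower semicontinuity of the norms under weak convergence (and Fatou for $M_2 f$ and the dissipation terms). Finally the continuity-in-time statements ($\widetilde{u}\in\mathscr{C}^0(\R_+;L^2)$, $f\in\mathscr{C}^0(\R_+;L^p)$) follow from the uniform bounds plus the equations in the usual way.

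The main obstacle I expect is the treatment of the kinetic boundary conditions in the compactness argument: on an absorbing boundary the trace of $f$ on $\Sigma^+$ is not a priori controlled, and one must be careful that no mass concentrates on the grazing/singular set $\Sigma^s$ and that the outgoing trace passes to the limit (or is simply not needed, the weak formulation of Definition~\ref{def:weak} only involving test functions vanishing on $\Sigma^+$). Handling this rigorously — establishing well-posedness and the moment identities \eqref{eq:momentkin1}--\eqref{eq:momentkin2} for the linear Vlasov problem with these mixed incoming/absorbing conditions, with constants uniform in the force field — is exactly what is delegated to Theorem~\ref{thm:vla} and Corollary~\ref{coro:vla} in the Appendix; granting those, the remaining difficulty is the usual 2D Navier-Stokes strong-compactness bookkeeping near $t=0$, which the parabolic smoothing of Theorem~\ref{thm:parabo} and the Br\'ezis–Gallou\"et inequality (Lemma~\ref{lem:bg}) already let us control, as in the proof of Proposition~\ref{propo:nrj-def}.
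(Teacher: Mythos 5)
Your overall architecture (decoupling into the linear Vlasov problem of Theorem~\ref{thm:vla} and the Navier-Stokes theory of Theorems~\ref{th:exNSpol} and~\ref{thm:parabo}, then Aubin-Lions compactness and lower semicontinuity for the energy inequality) matches the paper's, but the way you glue the two equations together has a genuine gap. You run a Picard-type alternation $u^{(n)} \mapsto f^{(n+1)} \mapsto u^{(n+1)}$ and claim that the coupled energy computation of Proposition~\ref{propo:nrj-def} can be ``reproduced at the level of the scheme'' to give bounds uniform in $n$. It cannot: that computation relies on the cancellation between the work of the drag force, $\int (j_f-(m_0f)u)\cdot u$, and the kinetic-energy flux $\int j_f\cdot u$ coming from the $\alpha=2$ moment identity, which together produce the signed dissipation $-\int f|u-v|^2$. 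In your scheme the kinetic equation is driven by $u^{(n)}$ while the fluid equation is driven by $u^{(n+1)}$, so the two cross terms involve different velocity fields and do not combine; estimating them separately (via Lemma~\ref{interpol} and Corollary~\ref{coro:vla}) only yields a recursion of the form $A_{n+1}\le C(T,\mathrm{data},A_n)$ with $C$ increasing in $A_n$, not a uniform bound. Moreover, even granting uniform bounds, a compactness limit of the iterates need not solve the coupled system: extracting a subsequence with $u^{(n_k)}\to u$ and $u^{(n_k+1)}\to u'$, the limit relations read ``$f$ solves Vlasov with field $u$'' and ``$u'$ solves Navier-Stokes with source built from $f$'', and nothing forces $u=u'$ unless the whole sequence converges (e.g. by a contraction estimate), which you do not establish.

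The paper circumvents both issues by applying Schaefer's fixed point theorem to a genuinely coupled but regularized problem: a velocity cut-off $\chi$ (odd, $\chi(z)z\ge 0$, $|\chi(z)|\le |z|$) is inserted in both the Vlasov friction and the drag force, so the coupled energy identity survives, and the energy bound is required only for actual fixed points of $\Lambda_\sigma$, i.e. for solutions of the regularized coupled system, exactly where the cancellation is available; continuity and compactness of the map $u\mapsto \Theta(\sigma u_0,\sigma S(u))+\sigma u_p$ on $L^2(0,T;H^1)$ are what the Vlasov stability and Aubin-Lions arguments are used for. The cut-off also keeps $f$ compactly supported in velocity (the initial data being regularized as well), so that the source lies in $L^2$ for the strong Navier-Stokes theory with $H^1_0$ data, and it is removed at the end ($\chi_\ep\to \mathrm{Id}$, $T_\ep\to\infty$) by the compactness-and-lower-semicontinuity argument you describe. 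To repair your proof you would need either to recast your alternating map as a Schauder/Schaefer fixed point on a suitable convex set (facing the same well-definedness, continuity and a priori bound issues, which the cut-off is precisely designed to settle), or to prove convergence of the whole iteration by a contraction argument; neither is supplied in your proposal.
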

\begin{proof}[Proof of Theorem~\ref{thm:weak}]
We prove the existence of Leray solutions by an approximation procedure relying on a fixed point scheme. We fix an odd function $\chi\in\mathscr{D}(\R)$ such that $\chi(z)z\geq 0$, $|\chi(z)|\leq |z|$ and focus on the following regularized problem
\begin{gather} \label{approx:Kinetic}
\partial_{t} f + v \cdot \nabla_{x} f + \div_{v} ( \chi(u-v) f) = 0,\\
\label{approx:NavierStokes1}
\partial_{t} u + (u \cdot \nabla) u - \Delta u + \nabla p =  \int_{\R^2} f\chi(v-u) \, dv,\\
\label{approx:NavierStokes2}
\div\, u=0,
\end{gather}
where for a vector $v=(v_1,v_2)$, $\chi(v)$ means $(\chi(v_1),\chi(v_2))$.  The unknowns are $u$ and $f$, equation \eqref{approx:Kinetic} is considered on $(0,T)\times\Omega\times\R^2$ with boundary conditions \eqref{CBf1}-\eqref{CBf2}, equation \eqref{approx:NavierStokes1} is considered on  $(0,T)\times\Omega$ with boundary conditions $u=u_p$ on $\partial\Omega$. The original initial conditions are here replaced by regular approximations (still denoted $f_0$ and $u_0$). The presence of the cut-off function $\chi$ allows to write a fixed point scheme leading to the following existence result.
\begin{lem}\label{lem:approx:ex}
  Fix $T>0$. If $u_0\in H^1_0(\Omega)$ and $f_0$ is compactly supported in velocity, there exists $f\in \mathscr{C}^0([0,T]; L^\infty(\Omega \times \R^2)-w\star) \cap \mathscr{C}^0([0,T]; L^q(\Omega\times\R^2))$ for any $q<\infty$, $u\in \mathscr{C}^0([0,T];H^1(\Omega))\cap L^2(0,T;H^2(\Omega))$, $p\in L^2(0,T;H^1(\Omega))$ such that $f$ satisfies \eqref{approx:Kinetic} in the sense of Definition~\ref{def:weak} (for $t\in [0,T]$) with initial condition $f_0$, and such that $(u,p)$ solves  \eqref{approx:NavierStokes1} -- \eqref{approx:NavierStokes2} a.e. on $[0,T]\times\Omega$ and $u(0)=u_0$. This solution satisfies furthermore the estimate for any $t\in[0,T]$
\begin{multline} \label{ineq:approx:nrj}
\frac{1}{2}M_2f(t) + \frac{1}{2}\|\widetilde{u}(t)\|_2^2  +  \int_0^t \|\nabla \widetilde{u}(s)\|_2^2 \, ds
+ \frac{1}{2} \int_0^t \! \int_{\Omega\times\R^2} f \chi(u-v) \cdot (u-v) \, dv \, dx \, ds \\
\leq C_{\Omega,R,u_p}(t,\|f_0\|_\infty, M_2 f_0,\|u(0)-u_p\|_2,\|\psi\|_\infty),
\end{multline}
where $\widetilde{u}=u-u_p$ and $C_{\Omega,R,u_p}$ is a positive continuous function nondecreasing with respect to each of its arguments.
\end{lem}
\begin{proof}[Proof of Lemma~\ref{lem:approx:ex}]
Let us describe our fixed point procedure, which is quite similar to the one used in \cite{bou-gran-mou}. We start with $u\in L^2(0,T;H^{1}(\Omega))$ and define, thanks to Theorem~\ref{thm:vla} (see the Appendix), $f_u$ as the unique element of $L^\infty(0,T;L^\infty \cap L^1(\Omega\times\R^2))$ that is solution of \eqref{approx:Kinetic} together with boundary conditions \eqref{CBf1}-\eqref{CBf2} in the sense of Definition~\ref{def:weak}. Notice that since the vector field $\chi(u-v)$ is bounded and $\psi,f_0$ are compactly supported, so is $f$. One checks that the vector field 
\begin{equation*}
S(u):= \int_{\R^2} f \chi(v-u) \, dv -(u_p \cdot \nabla)u -(u\cdot\nabla)u_p ,
\end{equation*}
belongs to $L^2(0,T;L^2(\Omega))$, and we hence infer from Theorem~\ref{th:exNSpol} in the Appendix the existence of a unique solenoidal solution $\widetilde{u} \in\mathscr{C}^0([0,T];H^1_0(\Omega))\cap L^2(0,T;H^2(\Omega))\cap H^1([0,T]\times\Omega)$ such that $\widetilde{u}(0)=u_0$ and solving equation
\begin{align} \label{eq:convreg}
\partial_{t} \widetilde{u} + (\widetilde{u} \cdot \nabla) \widetilde{u} - \Delta \widetilde{u} + \nabla \widetilde{p}  = S(u).
\end{align}
The extra terms involving $u_p$ in the right-hand side $S(u)$ of the equation \eqref{eq:convreg} are added to ultimately enforce the boundary condition $u=u_p$ on $\partial\Omega$, because $\widetilde{u}$ satisfies homogeneous Dirichlet boundary conditions. Indeed, if the map $u\mapsto \widetilde{u}+u_p$ has a fixed point $u$, using  
\begin{align*}
\partial_t u_p = (u_p \cdot \nabla)u_p = -\Delta u_p + \nabla q = 0,
\end{align*}
 we see that $u$ solves \eqref{approx:NavierStokes1} with $p:=\widetilde{p}+ q$. To keep track of the dependence with respect to $u$ and $u_0$ we denote the solution $\widetilde{u}$ by
$$\Theta(u_0,S(u)):=\widetilde{u}.$$  
Recall that we have also the following estimate for $\widetilde{u}$ (see again Theorem~\ref{th:exNSpol})
\begin{multline} \label{ineq:convreg}
\|\widetilde{u}\|_{L^\infty([0,T];H^1(\Omega))}^2 + \|\widetilde{u}\|_{L^2([0,T];H^2(\Omega))}^2 
+ \|\partial_t \widetilde{u}\|_{L^2([0,T]\times \Omega)}^2 \\
\leq C_{T,\Omega}\Big(\|u_0\|_{H^1(\Omega)},\|S(u)\|_{L^2((0,T)\times\Omega))}\Big).
\end{multline}
We plan to use Schaefer's fixed point Theorem which we recall here for the reader's convenience (for a proof, see e.g. \cite{GT}):
\begin{thm}[Schaefer] \label{thm:schaefer}
Let $E$ be a Banach space and $\Lambda :E\times[0,1]\rightarrow E$ a continuous mapping sending bounded subsets of $E\times[0,1]$ on relatively compact subsets of $E$. Denoting $\Lambda_{\sigma}:=\Lambda(\cdot,\sigma)$, if $\Lambda_0=0$ and the set $\bigcup_{\sigma\in[0,1]}\textnormal{Fix}(\Lambda_\sigma)$ is bounded in $E$, then $\textnormal{Fix} (\Lambda_1) \neq \emptyset$.
\end{thm}
We consider here $E:=L^2(0,T;H^{1}(\Omega))$ and define $\Lambda$ in the following way
\begin{align*}
\Lambda : E\times\sigma &\longrightarrow E \\
 (u,\sigma) & \longmapsto  \Theta (\sigma u_0, \sigma S(u))+\sigma u_p.
\end{align*}
Here $\Lambda_0 = 0$ because of the uniqueness property of Theorem~\ref{th:exNSpol} so that we have to check the following three properties for the mapping $\Lambda$. 
%
%
\bigskip
\par
\noindent $\bullet$ \emph{$\Lambda$ sends bounded subsets on relatively compact subsets:} Starting from $(u_n)$ bounded in $E$ and $(\sigma_n) \in [0,1]^{\N}$, the corresponding sequence $(S(\sigma_n u_n))$ is bounded in $L^2(0,T;L^2(\Omega))$ (using that $(f_n)$ is uniformly bounded and compactly supported). Thanks to estimate \eqref{ineq:convreg} and the Aubin-Lions Lemma we get  that  $(\Theta(\sigma_n u_0, \sigma_n S(u_n))$ is relatively compact in $E$.
\bigskip
\par
\noindent $\bullet$  \emph{$\Lambda$ is continuous:} Assume that
$$\displaystyle(u_n,\sigma_n) \operatorname*{\longrightarrow}_{n\rightarrow +\infty} (u,\sigma) \text{ in } E\times[0,1].$$
Thanks to the previous step we know that $(\widetilde{u_n}):=(\Theta(\sigma_n u_0,\sigma_n S(u_n))$ is a relatively compact sequence in $L^2(0,T;H^1(\Omega))$ so that it just remains to prove that the only possible limit point of this sequence in this space is $\Theta(\sigma u_0,\sigma S(u))$. Consider hence $z \in E$ such a limit point. Because of estimate \eqref{ineq:convreg} we know that $(\widetilde{u_n})$ is bounded in $L^\infty(0,T;H^1(\Omega))\cap L^2(0,T;H^2(\Omega))$ and that $(\partial_t \widetilde{u_n})$ is bounded in $L^2(0,T;L^2(\Omega))$  so that, by weak (or weak$-\star$) compactness, $z$ belongs necessarily to these three spaces. On the other hand, since
$$u_n  \operatorname*{\longrightarrow}_{n\rightarrow +\infty} u \text{ in } L^1([0,T]\times\Omega),$$
(as well as for any subsequence of $(u_n)$), we get by the stability property of Theorem~\ref{thm:vla},  the strong convergence
$$f_{u_n}  \operatorname*{\longrightarrow}_{n\rightarrow +\infty} f_{u} \text{ in } L^p(0,T;L^p(\Omega\times\R^2))$$ 
for all finite values of $p$. This is sufficient to pass weakly to the limit in the equation defining $\widetilde{u_n}$, so that $z$ is eventually a solution of the equation defining $\Theta(\sigma u_0,\sigma S(u))$ and we get 
$$z=\Theta(\sigma u_0,\sigma S(u)),$$ 
by the uniqueness property stated in Theorem~\ref{th:exNSpol} of the Appendix.
\bigskip
\par
\noindent $\bullet$ \emph{$\bigcup_{\sigma\in[0,1]}\textnormal{Fix}(\Lambda_\sigma)$ is bounded:} Assume $u=\Lambda_\sigma(u)$. If $\widetilde{u}:=\Theta(\sigma u_0,\sigma S(u))$, this means that $u=\widetilde{u}+\sigma u_p$ and $\widetilde{u}$ solves the following system (we use here $(u_p \cdot \nabla)u_p = 0$)
\begin{gather*}
\partial_{t} f + v \cdot \nabla_{x} f + \div_{v} ( \chi(u -v)  f) = 0, \\
\partial_{t} \widetilde{u} + (\widetilde{u} \cdot \nabla) \widetilde{u} - \Delta \widetilde{u} + \nabla q  = \sigma \int_{\R^2} f\chi(v-u) \, dv - \sigma((u_p \cdot \nabla) \widetilde{u} +(\widetilde{u}\cdot\nabla)u_p) ,
\end{gather*}
so that, multiplying the first equation by $\sigma$,
\begin{gather*}
\partial_{t} g + v \cdot \nabla_{x} g + \div_{v} ( \chi(u -v)  g) =  0, \\
\partial_{t} \widetilde{u} + (\widetilde{u}\cdot \nabla) \widetilde{u} - \Delta \widetilde{u} + \nabla \widetilde{p}= \int_{\R^2} g\chi(v-u)) \, dv- \sigma((u_p \cdot \nabla) \widetilde{u} +(\widetilde{u}\cdot\nabla)u_p) ,
\end{gather*}
where $g:=\sigma f$. Since $\widetilde{u} \in \mathscr{C}^0([0,T];H^1_0(\Omega))$ and $\partial_t \widetilde{u} \in L^2(0,T;L^2(\Omega))$, we have enough regularity to multiply this equation  by $ \widetilde{u}$ and perform the usual integration by parts (using that both $u_p$ and $\widetilde{u}$ are solenoidal):
\begin{multline*} 
\frac{1}{2}  \|\widetilde{u}(t)\|_2^2 + \int_0^t  \|\nabla \widetilde{u}(s)\|_2^2 \, ds 
= \int_0^t \! \int_{\Omega} \int_{\R^2} g\chi(v- u)\cdot  \widetilde{u} \, dv\, dx \, ds \\
-\sigma \int_0^t \! \int_\Omega \big[(\widetilde{u} \cdot\nabla) u_p \big]\cdot  \widetilde{u}\,dx\, ds + \frac{1}{2} \|\widetilde{u}(0)\|_2^2.
\end{multline*}
Since $u_p$ is Lipschitz and $ \widetilde{u} = u- \sigma u_p$ with $\sigma\in[0,1]$, we obtain
\begin{multline} \label{eq:nrjflu:fixed}
\frac{1}{2}\|\widetilde{u}(t)\|_2^2 + \int_0^t \|\nabla \widetilde{u}(s)\|_2^2 \, ds
\leq \int_0^t \! \int_{\Omega} \int_{\R^2} g\chi(v- u)\cdot u  \, dv\, dx \, ds \\
- \sigma\int_0^t \! \int_{\Omega} \int_{\R^2} g\chi(v- u)\cdot  u_p  \, dv\, dx \, ds 
+ \|\nabla u_p\|_\infty\int_0^t \|\widetilde{u}(s)\|_2^2 \, ds + \frac{1}{2} \|\widetilde{u}(0)\|_2^2 .
\end{multline}
Now $g$ is solution of the Vlasov equation defined by the field $\chi(u-v)$ in the sense of Definition~\ref{def:weak}: Theorem~\ref{thm:vla}  applies and one gets the following estimate for the second moment of $g$
\begin{multline} \label{eq:nrjkin:fixed}
\frac{1}{2}M_2 g(t) \, ds
\leq  \int_0^t \! \int_{\Omega}\int_{\R^2} v\cdot \chi(u-v) g  \, dv \, dx \, ds  + \frac{1}{2}M_2 g_0 
- \frac{1}{2} \int_0^t \! \int_{\Gamma^l} \psi(s,x,v) |v|^2 v\cdot n(x) \,dx \, dv.
\end{multline}
Summing  \eqref{eq:nrjflu:fixed} and \eqref{eq:nrjkin:fixed} we get (using that $\chi$ is an odd function and $\text{Supp}_v \psi \subset B(0,R)$) on $[0,T]$
\begin{align*}
E(g(t),\widetilde{u}(t)) +\int_0^t \|\nabla \widetilde{u}(s)\|_2^2 ds &+ \int_0^t\int_{\Omega\times\R^2} g \chi(u-v)\cdot (u-v)  \, dv \, dx \, ds\\
&\leq\sigma \int_0^t \int_{\Omega\times\R^2} g \chi(u-v)\cdot\,u_p  \, dv \, dx \, ds \\
& +\|\nabla u_p\|_\infty\int_0^t \|\widetilde{u}(s)\|_2^2 ds  +C_{\Omega,R}(t,\|\psi\|_\infty) +  E(g(0),\widetilde{u}(0)),
\end{align*}
where $2 E(g(t),\widetilde{u}(t)) = \|\widetilde{u}(t)\|_2^2 + M_2 g(t)$.
Recall that $\chi(z)\cdot z \geq 0$ and $|\chi(z)|\leq |z|$.
Thus, using again Young's inequality together with the nonnegativity of $g$ we get
\begin{align*}
| g \chi(u-v) \cdot u_p|  &\leq \frac{1}{2} g|\chi(u-v)|^2 + \frac{1}{2}g |u_p|^2
\leq \frac{1}{2} g \chi(u-v)\cdot (u-v)   +\frac{1}{2}g |u_p|^2.
\end{align*} 
Since $\sigma\in[0,1]$, we eventually infer 
\begin{multline*}
E(g(t),\widetilde{u}(t)) + \int_0^t  \|\nabla \widetilde{u}(s)\|_2^2 \, ds 
+ \frac{1}{2} \int_0^t \! \int_{\Omega\times\R^2} g \chi(u-v)\cdot (u-v)  \, dv \, dx \, ds \\
\leq \frac{1}{2}\|u_p\|_\infty^2 M_0 g(t)
+ \|\nabla u_p\|_\infty \int_0^t  \|\widetilde{u}(s)\|_2^2 \, ds + C_{\Omega,R}(t,\|\psi\|_\infty) +  E(g(0),\widetilde{u}(0)).
\end{multline*}
We now use the moment estimate stated in Theorem~\ref{thm:vla}:
\begin{equation*}
M_0 g(t) = M_0 g(0) - \int_0^t \! \int_{\Gamma^l} \psi(s,x,v) v\cdot n(x) \,dv \,dx \,ds,
\end{equation*}
so that we deduce
\begin{multline*}
E(g(t),\widetilde{u}(t)) + \int_0^t \|\nabla \widetilde{u}(s)\|_2^2 \,ds 
 + \frac{1}{2} \int_0^t \! \int_{\Omega \times \R^2} g \chi(u-v)\cdot (u-v)  \, dv \, dx \, ds \\
 \leq C_{\Omega,R,u_p}(t,\|g(0)\|_\infty,M_2 g_0,\|\widetilde{u}(0)\|_2,\|\psi\|_\infty) 
 + \|\nabla u_p\|_\infty \int_0^t \|\widetilde{u}(s)\|_2^2 \, ds.
\end{multline*}
Thanks to Gronwall's lemma we thus get on  $[0,T]$
\begin{align} \label{ineq:nrj-fix}
\nonumber E(g(t),\widetilde{u}(t)) +\int_0^t \|\nabla \widetilde{u}(s)\|_2^2 ds &+ \frac{1}{2}\int_0^t\int_{\Omega\times\R^2} g \chi(u-v)\cdot(u-v)  \, dv \, dx \, ds\\
&\leq C_{\Omega,R,u_p}(t,\|g(0)\|_\infty,M_2 g_0,\|\widetilde{u}(0)\|_2,\|\psi\|_\infty).
\end{align}
Recall that $\sigma \in [0,1]$ and $g=\sigma f$, $\widetilde{u}(0) = \sigma(u_0-u_p)$. In particular $g(0)=\sigma f_0 \leq f_0$ and $\|\widetilde{u}(0)\|_2 \leq \|u_0-u_p\|_2$. Since $u- u_p= \widetilde{u}$, we have for $t\in[0,T]$
\begin{align} \label{ineq:nrj-fixu}
\hspace{-0.5cm}\nonumber  E(\sigma f,u- u_p)(t) +  \int_0^t \|\nabla (u- u_p)(s)\|_2^2 ds &+ \frac{1}{2}\int_0^t\int_{\Omega\times\R^2} \sigma f \chi(u-v)\cdot (u-v)  \, dv \, dx \, ds \\
&\leq C_{\Omega,R,u_p}(t,\|f_0\|_\infty, M_2 f_0,\|u(0)-u_p\|_2,\|\psi\|_\infty).
\end{align}
In particular, since $\chi(z) \cdot z\geq 0$ and due to the increasingness of $C_{\Omega,R,u_p}$ with respect to its first argument, we obtain a uniform (in $\sigma$) bound on $u$ in $E=L^2(0,T;H^1(\Omega))$. 
%
\ \par
In conclusion, we may apply Schaefer's fixed point Theorem to obtain the existence of a fixed point for the map $u\mapsto \Lambda(u,1)$. For such a fixed point, the previous estimate \eqref{ineq:nrj-fixu} is satisfied with $\sigma=1$ so that we indeed recover \eqref{ineq:approx:nrj}.
\end{proof}
\par
We aim now at considering the following asymptotics
$$ (T_\ep)_\ep \nearrow +\infty, \quad (\chi_\ep)_\ep \rightarrow \textnormal{Id}_\R, \quad  (u_0^\ep)_\ep \operatorname*{\rightarrow}^{L^2} u_0, \quad (f_0^\ep)_\ep \rightarrow f_0,$$
where the last convergence is more precisely described by $f_0^\ep = \eta^\ep f_0$, with $(\eta_\ep)$ a family of compactly supported in the velocity variable, such that $0\leq \eta^\ep \leq 1$ and increasing to the constant function $1$. For each fixed $\ep>0$, Lemma \ref{lem:approx:ex} gives us the existence of $(f_\ep,u_\ep,p_\ep)$ defined only for $(t,x,v) \in [0,T_\ep]\times\Omega\times\R^2$ and $(t,x)\in[0,T_\ep]\times\Omega$, of the following system
\begin{gather}
\label{approx:Kinetic:ep}
\partial_{t} f_\ep + v \cdot \nabla_{x} f_\ep + \div_{v} ( \chi_\ep( \eta_\ep\,u_\ep-v) f_\ep) =  0,\\
\label{approx:NavierStokes1:ep}
\partial_{t} u_\ep + (u_\ep\cdot \nabla) u_\ep - \Delta u_\ep + \nabla p_\ep = \int_{\R^2} \chi_\ep(v-\eta_\ep u_\ep) f_\ep  \, dv,\\
\label{approx:NavierStokes2:ep}
\div\, u_\ep=0, \\
(f_\varepsilon, u_\varepsilon)|_{t=0}=(f_0, u_0^\varepsilon).
\end{gather}%
These solutions satisfy furthermore estimate \eqref{ineq:approx:nrj}. We extend $(f_\ep,u_\ep,p_\ep)$ by $0$ for $t>T_\ep$. In this way $(u_\ep)$ is bounded in $L^\infty_{\textnormal{loc}}(\R_+;L^2(\Omega))\cap L^2_{\textnormal{loc}}(\R_+;H^1(\Omega))$ and $(f_\ep)$ in $\Ll^\infty(\R_+;L^\infty(\Omega\times\R^2))$. Notice however that for each $\ep>0$ the equations are only satisfied on $[0,T_\ep]$, but this is of no importance since the weak formulation of Definition~\ref{def:leray} has only to be checked on each finite interval $[0,T]$. We have weak-$\star$ compactness for $(f_\ep)_\ep$ in $\Ll^\infty(\R_+;L^\infty(\Omega\times\R^2))$ and strong compactness for $(u_\ep)$ in $L^{2}_{\textnormal{loc}}(\R_{+}; L^{2}(\Omega))$ is obtained as usual thanks to Aubin-Lions Lemma. After a diagonal extraction along the intervals $[0,n]$, we can pass to the limit in each nonlinear term to recover the weak formulation of the Leray solutions. This strong compactness together with the usual lower semi-continuity estimate for weak limits allows in the same time to get estimate \eqref{ineq:nrj} from estimate \eqref{ineq:approx:nrj} and hence check that the corresponding cluster point $(f,u)$ is indeed a Leray solution.
\end{proof}
%
%
%
%
%

%
%
%
%
%
%
%
\section{Exit geometric conditions}
\label{Sec:EGC}
\subsection{Definition of the lateral EGC}
We introduce in this section the  key geometric conditions of this paper. They  bear on the geometry of the characteristics associated to a smooth vector field $u(t,x)$ defined on $I\times \Omega$, where $I=\R_+$ or $I=\R$. \par
First, it is convenient to fix a linear extension operator $P$, continuous from $L^\infty(\Omega)$ to $L^\infty(\R^2)$ and from $\Lip(\Omega)$ to $\Lip(\R^2)$, such that
\begin{equation*}
\Supp P u \subset \mathcal{K}, \ \ \forall u \in L^{\infty}(\Omega),
\end{equation*}
where $\mathcal{K}$ is a fixed compact set of $\R^2$, containing $\Omega$. We can furthermore assume that the norm of $P$ is less than $2$. 
Also, for a regular time-dependent vector field $u(t,x)$ defined on $I \times \Omega$, we use the convention
$$
(P u) (t,\cdot) = P(u(t, \cdot)).
$$
Now, given a vector field $u(t,x)$ on $I\times \Omega$, say in $\mathscr{C}^0(I;\Lip(\Omega;\R^{2}))$, and its extension $P u$, we define the characteristics  as the solution of the following system of ordinary differential equations:
\begin{equation} \label{Characteristics}
\left\{ \begin{array}{l}
	\dot{X} = V, \\
	\dot{V} = (P u) (t,X) - V, \\
	X_{|t=s}=x, \, V_{|t=s}=v,
\end{array} \right.	
\end{equation}
with $(x,v) \in \overline{\Omega} \times \R^2$.
By the Cauchy-Lipschitz theorem, given the value $(x,v) \in \overline{\Omega} \times \R^d$ at time $s\in I$ this previous system admits a unique global solution $(X,V) \in \R^2 \times \R^2$. 
More precisely, we denote by $(X(s,t,x,v), V(s,t,x,v))$ the value of this solution at time $t$.

Now we introduce for $(x,v) \in \overline\Omega \times \R^2$ and $s \in \R_{+}$, 
\begin{align}
\label{DefTau-}
\tau_-(s,x,v) &:= \sup\{ t \in (-\infty,s)\cap I \, : \, X(s,t,x,v) \notin \overline{\Omega} \},\\
\label{DefTau+}
\tau_+(s,x,v) &:= \inf \{ t \in (s,+\infty) \,:\, X(s,t,x,v) \notin \overline{\Omega} \}.
\end{align}
The corresponding interval of times $t$ during which $X(s,t,x,v)$ remains in $\overline{\Omega}$ is therefore
$$
{\mathcal I} = [ \tau_-(s,x,v),\tau_+(s,x,v) ] .
$$
Note that this depends only on $u$ (and not on the extension operator $P$). 
Moreover, if $\tau_-(s,x,v) \neq 0$ (resp. if $\tau_+(s,x,v) < +\infty$), we have necessarily
$$
X(s,\tau_-(s,x,v),x,v) \in \partial \Omega
\quad
\text{ (resp. } X(s,\tau_+(s,x,v),x,v) \in \partial \Omega \text{)}.
$$
%
%
%
%
We are now in position to define the \emph{lateral exit geometric condition} in time $T>0$ with respect to a compact set of $K$ of $\Gamma^l$ on an time interval $J$.
%
\begin{defi}\label{Def:GC}
Let $K$ be a compact set of $\Gamma^l$ and $J$ a subinterval of $I$. 
We say that $u$ satisfies the \emph{lateral exit geometric condition} (lateral EGC) in time $T$ with respect to $K$ on $J$, if
\begin{equation} \label{def:exit1}
 \sup_{(s,x,v) \in  J \times K} (\tau_+(s,x,v)-s) <  T,
\end{equation}
and furthermore, for all $(s,x,v)\in J\times K$,  $(X,V)(s,\tau_{+}(s,x,v),x,v)\in \Sigma^+$. When $J=I$, we simply speak of the lateral EGC in time $T$ with respect to $K$.
%
\end{defi}
Loosely speaking, this definition means that all trajectories issued from a compact subset $K$ of the lateral boundary have a maximal lifetime  in $\overline{\Omega} \times \R^{2}$ that is less than $T$ and leave the domain \emph{transversally}. Note also that even if the lateral EGC is satisfied, it does not forbid some trajectories to be trapped inside $\Omega$, since it only concerns those that are issued from $K$. 
Finally we notice that this definition does not depend on the extension operator $P$. 

This geometric condition is reminiscent of the celebrated Geometric Control Condition (GCC) of Bardos, Lebeau and Rauch \cite{BLR}, which appears in the context of controllability and stabilization of the wave equation. We also mention that several GCC were recently introduced in other contexts of kinetic theory, see \cite{GHK}, \cite{BS}, \cite{HKL}. The main difference with all these geometric conditions stems from the friction term in~\eqref{Characteristics}, which has an important effect on the dynamics.


\subsection{The case of the Poiseuille flow}
\label{ex-poi}
An important particular case of vector fields we intend to consider is that of Poiseuille flows \eqref{Poiseuille}. One can indeed observe that there are compact sets $K \subset \Gamma^{l}$, such that the Poiseuille flow satisfies the lateral EGC in some time $T>0$ with respect to $K$.  Since Poiseuille flows are stationary, we may use $J=\R_+$ here. We have the following property.
\begin{lem}
\label{poigeo}
A Poiseuille flow $u_p$ satisfies the lateral EGC in some time $T>0$ with respect to a compact $K \subset \Gamma^{l}$ on $\R_+$  if $K$ satisfies the property
\begin{equation}
\label{propK}
\forall (x,v) \in K, \quad |x_2 + v_2| \neq 1.
\end{equation}
\end{lem}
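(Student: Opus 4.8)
The plan is to integrate the characteristic system \eqref{Characteristics} explicitly for the Poiseuille flow, since the field is stationary and the equations decouple in the two coordinates. Writing $u_p(x) = (1-x_2^2)u_{\max} e_1$, the extension $P u_p$ agrees with $u_p$ on $\overline\Omega$, and what matters is the behavior of the trajectory while $X(t) \in \overline\Omega$, i.e. only the values of $u_p$ on $\overline\Omega$ enter. The crucial observation is that the second component satisfies the \emph{autonomous linear} ODE $\dot X_2 = V_2$, $\dot V_2 = (Pu_p)_2(t,X) - V_2 = 0 - V_2$ as long as we are in $\overline\Omega$ — more precisely, $(Pu_p)_2 \equiv 0$ everywhere since $u_p$ is horizontal and $P$ can be chosen to respect this, or at any rate the $e_2$-component vanishes on $\overline\Omega$. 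Hence, starting from $(x,v)\in K\subset\Gamma^l$ at time $s$, one gets $V_2(t) = v_2 e^{-(t-s)}$ and $X_2(t) = x_2 + v_2(1-e^{-(t-s)})$ for $t$ in the interval where the trajectory stays in $\overline\Omega$. In particular $X_2(t)$ is monotone in $t$ and converges to the limit $x_2+v_2$ as $t\to+\infty$.

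Next I would use the hypothesis \eqref{propK}: $|x_2+v_2|\neq 1$. Two cases. If $|x_2+v_2|>1$, then since $X_2(t)\to x_2+v_2$ monotonically from the initial value $x_2\in(-1,1)$, the trajectory reaches $\{x_2 = \pm1\}$ in finite time; one computes explicitly the exit time $t_\ast$ from $x_2+v_2(1-e^{-(t_\ast-s)}) = \pm 1$, giving $t_\ast - s = -\log\bigl(1 - \tfrac{\pm1 - x_2}{v_2}\bigr)$ (with sign chosen according to $\mathrm{sgn}(v_2)$), and at that time $V_2(t_\ast) = v_2 e^{-(t_\ast-s)} \neq 0$ has the correct sign so that $n\cdot V < 0$ fails to vanish, i.e. the exit through the horizontal walls $\Gamma^u$ or $\Gamma^d$ (going out, hence landing in $\Sigma^+$) is transversal. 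If instead $|x_2+v_2|<1$, the vertical coordinate stays strictly inside $(-1,1)$ forever, so the trajectory can only leave through the vertical walls $\{x_1 = \pm L\}$; here I invoke the behavior of the first component. Since $(x,v)\in\Gamma^l$ means $v_1>0$ and the drag term pushes $V_1(t)$ toward the (bounded, and on the relevant strip strictly positive) value $(1-X_2^2)u_{\max}$, one shows $V_1(t)$ stays bounded below by a positive constant (depending only on $K$ via the uniform bound $\inf_K (1-(x_2+v_2)^2) =: c_0 > 0$ once $t-s$ is large enough, and on the initial sign for small times), so $X_1(t)$ increases and reaches $L$ in finite, uniformly bounded, time, and the exit is transversal through $\Gamma^r\subset\Sigma^+$.

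The remaining point is uniformity: I must produce a single $T>0$ with $\sup_{(s,x,v)\in\R_+\times K}(\tau_+(s,x,v)-s) < T$, and this is where a compactness argument is needed. Because the system is autonomous, $\tau_+(s,x,v)-s$ is independent of $s$, so it suffices to bound $\sup_{(x,v)\in K} \tau_+(0,x,v)$. Then I would argue that $(x,v)\mapsto \tau_+(0,x,v)$ is, on the set where \eqref{propK} holds, finite and upper semicontinuous (or use the explicit formulas above, which show it is continuous on each of the two relatively open pieces $\{|x_2+v_2|>1\}\cap K$ and $\{|x_2+v_2|<1\}\cap K$, with a uniform bound on each), hence bounded on the compact $K$. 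The mild subtlety — and the main thing to be careful about — is the matching/behavior near the boundary of these two regimes; but \eqref{propK} excludes exactly $|x_2+v_2|=1$, so $K$ is partitioned into two compact pieces $K\cap\{|x_2+v_2|\geq 1+\delta\}$ and $K\cap\{|x_2+v_2|\leq 1-\delta\}$ for some $\delta>0$ (by compactness of $K$ and continuity of $(x,v)\mapsto x_2+v_2$), on each of which the explicit exit-time bound is uniform; taking the max and adding a bit gives the desired $T$. The main obstacle is thus not any single estimate but organizing the case distinction cleanly and extracting the uniform time bound, together with checking transversality of the exit in each case (that $V$ at the exit time is genuinely non-tangent, which again follows from the explicit formulas since $V_2(t_\ast)\neq0$ in the horizontal-exit case and $V_1(t_\ast)>0$ in the vertical-exit case).
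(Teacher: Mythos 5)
Your proof is correct and follows essentially the same route as the paper: the paper's (sketched) argument also integrates the vertical characteristics explicitly (so that $x_2+v_2$ is conserved and $V_2(t)=v_2e^{-(t-s)}$), uses compactness of $K$ to split into the two uniform regimes $|x_2+v_2|\leq 1-\delta$ and $|x_2+v_2|\geq 1+\delta$, and in the first regime bounds the horizontal Poiseuille component (hence $V_1$) below along the trajectories to reach $\Gamma^r$ in uniform time, while in the second the trajectories reach the horizontal walls transversally. Your write-up is, if anything, more detailed than the paper's sketch (explicit exit times, the uniform lower bound on $V_1$, and the upper bound on the exit time over $K$).
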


\begin{proof}[Proof of Lemma~\ref{poigeo}]

Let us briefly sketch the proof of this result. Assume that \eqref{propK} is verified.
The set $K$ being compact, it means that there are $\eta_1, \eta_2>0$ such that 
\begin{equation}
\label{pppp}
\forall (x,v) \in K, \quad |x_2 + v_2| \leq 1 -\eta_1 \text{ or  Ê}Ê |x_2 + v_2| \geq 1 +\eta_2.
\end{equation}
Let us now observe  from~\eqref{Characteristics} and the fact that a Poiseuille flow has a zero vertical component, that the quantity $x_2 + v_2$ is conserved along the characteristic curves. Moreover the equation on $v_2$ can be solved explicitly. 
%
The first case in~\eqref{pppp} corresponds to the scenario where the characteristics issued from $K$ stay at a positive distance of the horizontal parts of the boundary; it follows from a view of~\eqref{Poiseuille} that the horizontal component of the Poiseuille flow along all such trajectories is bounded below in norm by a positive constant. As a consequence, all such trajectories can reach the right part of the boundary in some uniform time.
\par
The second case in~\eqref{pppp} corresponds to the scenario where the characteristics issued from $K$ can reach the horizontal parts of the boundary.

\end{proof}
\begin{rem}
\label{BC1}
With the other choice of boundary condition~\eqref{Poiautre} described in Remark~\ref{BC0}, we see that the Poiseuille flow $u'_p$ automatically satisfies the lateral EGC in some finite time with respect to \emph{any} compact set of $\Gamma^l$ on $\R_+$. Indeed, since the modulus of $u'_p$ is uniformly bounded below, all characteristics are ``uniformly'' driven to the right side.
\end{rem}
\subsection{Properties}
In this subsection, we give elementary continuity properties related to the lateral EGC.
\par
For the sake of clarity in the coming lines we will use the notation: for $(x,v) \in \overline{\Omega} \times \R^2$, 
\begin{equation}
\label{eq:not:tauxv}
\tau_{x,v}^\pm:=\tau_{\pm}(0,x,v),
\end{equation}
and also (when the latter is finite)
\begin{equation}\label{eq:not:XVxv}
(X_{x,v}^\pm,V_{x,v}^\pm):=(X(0,\tau_{x,v}^\pm,x,v),V(0,\tau_{x,v}^\pm,x,v)).
\end{equation}
We start with a lemma concerning the regularity of the entering and exit times of a particle inside $\Omega$. We recall that $\Sigma^s$ was defined in \eqref{Eq:DefSigmas}. The following result concerns only stationary vector fields (and we thus consider the case $I=\R$)
\begin{lem}\label{propo:tau}
Consider $u \in \Lip(\overline{\Omega})$ and $K$ a compact subset of $\Gamma^l$ 
for which we define 
\begin{equation}\label{AK}
A_K:=\{ (x,v) \in \overline{\Omega} \times \R^{2} \,:\, \tau_{x,v}^- \neq -\infty \text{ and } (X_{x,v}^-,V_{x,v}^-) \in K\}.
\end{equation}
We have the following properties
\begin{itemize}
\item[(i)] The maps $(x,v)\mapsto \tau_{x,v}^-$ and $(x,v) \mapsto (X_{x,v}^-,V_{x,v}^-)$ are continuous at any point $(x,v)$ such that $\tau_{x,v}^-\neq -\infty$ and $(X_{x,v}^-,V_{x,v}^-) \notin \Sigma^s$.
\item[(ii)] The maps $(x,v)\mapsto \tau_{x,v}^+$ and $(x,v) \mapsto (X_{x,v}^+,V_{x,v}^+)$ are continuous at any point $(x,v)$ such that $\tau_{x,v}^+\neq +\infty$ and $(X_{x,v}^+,V_{x,v}^+) \notin \Sigma^s$.
\item[(iii)] If  $u$ satisfies the lateral EGC with respect to $K$ in a finite time, then $A_K$ is at positive distance of the set $\{ (x,v) \in \overline{\Omega} \times \R^{2} \,:\, \tau_{x,v}^- = -\infty \}$.
\end{itemize}
\end{lem}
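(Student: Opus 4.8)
The plan is to treat the three items in sequence, deriving (i) and (ii) from continuous dependence on initial conditions for the ODE \eqref{Characteristics} together with a transversality argument at the exit point, and then deducing (iii) from a compactness argument combined with the lateral EGC. For item (ii), fix $(x_0,v_0)$ with $\tau^+_{x_0,v_0}=:T_0<+\infty$ and $(X^+_{x_0,v_0},V^+_{x_0,v_0})\notin\Sigma^s$. Since $Pu\in\Lip(\R^2)$, the flow $(s,t,x,v)\mapsto (X,V)(s,t,x,v)$ is continuous (indeed locally Lipschitz) in $(x,v)$, uniformly for $t$ in compact time intervals. The key point is that because $(X^+,V^+)\notin\Sigma^s$, at the exit point the normal $n(X^+_{x_0,v_0})$ is well defined and $n(X^+_{x_0,v_0})\cdot V^+_{x_0,v_0}>0$ (it cannot be $<0$, since then the trajectory would have been outside $\overline\Omega$ just before $T_0$, contradicting the definition of $\tau^+$; it cannot be $0$, as that is exactly the tangency case excluded by $\Sigma^s$). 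Writing $\Omega$ locally near $X^+_{x_0,v_0}$ as $\{\rho<0\}$ for a smooth $\rho$ with $\nabla\rho\neq 0$, the function $t\mapsto \rho(X(0,t,x,v))$ has, for $(x,v)$ near $(x_0,v_0)$, a strictly positive derivative at $t$ near $T_0$ (by the transversality just noted and continuity of the flow and of $\dot X=V$), so it crosses $0$ transversally; the implicit function theorem then gives continuity (in fact Lipschitz regularity) of $(x,v)\mapsto\tau^+_{x,v}$ near $(x_0,v_0)$, and continuity of $(X^+_{x,v},V^+_{x,v})$ follows by composition with the flow. Item (i) is identical after reversing time in \eqref{Characteristics} (the friction term only changes sign, which does not affect the argument).

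For item (iii), assume $u$ satisfies the lateral EGC with respect to $K$ in some finite time $T$, and suppose for contradiction that $\dist(A_K,\{\tau^-_{x,v}=-\infty\})=0$. Then there exist sequences $(x_n,v_n)\in A_K$ and $(y_n,w_n)$ with $\tau^-_{y_n,w_n}=-\infty$ and $|(x_n,v_n)-(y_n,w_n)|\to 0$. By definition of $A_K$, the backward trajectory from $(x_n,v_n)$ hits $K\subset\Gamma^l$ at time $\tau^-_{x_n,v_n}$; running time forward, $(x_n,v_n)=(X,V)(0,0,x_n,v_n)$ lies on the forward trajectory issued from a point of $K$, and by the EGC this forward trajectory leaves $\overline\Omega$ transversally through $\Sigma^+$ at a time $\tau_+ - \tau^-_{x_n,v_n}\le T$ after entering. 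In particular $\tau^+_{x_n,v_n}\le T + |\tau^-_{x_n,v_n}|$ is finite and $(X^+_{x_n,v_n},V^+_{x_n,v_n})\in\Sigma^+\subset(\partial\Omega\times\R^2)\setminus\Sigma^s$, so item (ii) applies: $(x,v)\mapsto\tau^+_{x,v}$ is continuous at $(x_n,v_n)$. But $\tau^-_{y_n,w_n}=-\infty$ means the backward trajectory from $(y_n,w_n)$ stays in $\overline\Omega$ for all negative times; there is no immediate contradiction at this level, so I instead argue on the \emph{entering} data: passing to the limit. Extract a subsequence so that the entering points $(X^-_{x_n,v_n},V^-_{x_n,v_n})\in K$ converge (by compactness of $K$) to some $(x_\ast,v_\ast)\in K$, and the entering times $\tau^-_{x_n,v_n}$ are bounded below by $-T'$ for some $T'<\infty$ (this is the content one must extract from the EGC: trajectories issued from $K$ spend a uniformly bounded time in $\overline\Omega$, hence the ``age'' $-\tau^-$ of any point of $A_K$ is $\le$ that uniform bound); then extract further so $\tau^-_{x_n,v_n}\to -t_\ast\in[-T',0]$. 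By continuous dependence of the flow on initial data and on the starting time, $(x_n,v_n)=(X,V)(\tau^-_{x_n,v_n},0,X^-_{x_n,v_n},V^-_{x_n,v_n})\to (X,V)(-t_\ast,0,x_\ast,v_\ast)=: (x_\ast',v_\ast')$, which by the EGC satisfies $\tau^+_{x_\ast',v_\ast'}<+\infty$ — hence also $(y_n,w_n)\to (x_\ast',v_\ast')$. Applying item (ii) at $(x_\ast',v_\ast')$ gives $\tau^+_{y_n,w_n}\to \tau^+_{x_\ast',v_\ast'}<+\infty$ for large $n$, contradicting $\tau^+_{y_n,w_n}\ge$ ... — here I use instead that $\tau^-$ is bounded below near $(x_\ast',v_\ast')$ by applying item (i) at $(x_\ast',v_\ast')$ (its backward entering point is $(x_\ast,v_\ast)\in K\not\subset\Sigma^s$, using that $\Gamma^l$ excludes corners and transversal entering means $n\cdot v<0$, so $\Gamma^l\cap\Sigma^s=\emptyset$), which forces $\tau^-_{y_n,w_n}$ to be finite and close to $-t_\ast$ for large $n$ — contradicting $\tau^-_{y_n,w_n}=-\infty$.

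The main obstacle is the transversality bookkeeping in item (iii): one must carefully extract from the EGC the uniform bound on the in-domain sojourn time of trajectories issued from $K$ (so that the ``ages'' $-\tau^-$ of points of $A_K$ form a bounded set), and then pass to the limit along the entering data rather than along $(x_n,v_n)$ directly, because a priori the limit of $(x_n,v_n)$ could land on $\Sigma^s$ where continuity fails. Using $K\subset\Gamma^l$ (no corners, $n\cdot v<0$ strictly, so $K$ is disjoint from $\Sigma^s$) guarantees item (i) applies at the limiting entering configuration and yields the finiteness of $\tau^-$ in a neighbourhood, which is precisely what is incompatible with $\tau^-_{y_n,w_n}=-\infty$. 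Items (i) and (ii) themselves are routine applications of Cauchy--Lipschitz continuous dependence plus the implicit function theorem, once the sign of $n\cdot V^\pm$ at a non-$\Sigma^s$ boundary crossing is pinned down.
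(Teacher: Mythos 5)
Your items (i) and (ii) follow essentially the same route as the paper: Gronwall/continuous dependence for the flow of \eqref{Characteristics} combined with transversality of the boundary crossing at a non-$\Sigma^s$ point (the paper phrases the transversality simply as $V\cdot n<0$ near the crossing time instead of invoking the implicit function theorem, but the content is the same). For (iii), however, you take a genuinely different route. The paper argues directly and quantitatively: since $K\subset\{v_1\geq\alpha\}$ and, by the lateral EGC, $|\tau^-_{x,v}|\leq T$ for $(x,v)\in A_K$, the relation $|e^tV_1(t)-e^sV_1(s)|\leq\|Pu\|_\infty|t-s|$ gives a uniform $\delta>0$ with $V_1\geq\alpha/2$ on a $\delta$-window around the entering time, hence $X_1(0,\tau^-_{x,v}-\delta,x,v)\leq-L-\tfrac{\alpha}{2}\delta$ for every $(x,v)\in A_K$; a single application of the Gronwall estimate then yields a uniform $\eta>0$ such that every point within $\eta$ of $A_K$ also exits backwards, which is precisely the positive-distance claim. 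You instead argue by contradiction and compactness: using the uniform age bound $|\tau^-|\leq T$ from the EGC and the compactness of $K$, you pass to the limit along the \emph{entering} data and then apply item (i) at the limiting point, whose entering datum lies in $K\subset\Gamma^l$ and hence off $\Sigma^s$. This works, and passing through the entering data (rather than through the points $(x_n,v_n)$ themselves, whose limiting trajectory could a priori meet $\Sigma^s$) is the right device; what the paper's argument buys is a uniform, explicit $\eta$ and independence from item (i), while yours is softer and non-quantitative. Two points you should make explicit to close the write-up: the assertion that the backward entering datum of $(x_\ast',v_\ast')$ is exactly $(x_\ast,v_\ast)$ at time $-t_\ast$ requires a short verification (the limit trajectory stays in $\overline\Omega$ on $[-t_\ast,0]$ because each approximating trajectory does and $\overline\Omega$ is closed, and it is strictly outside just before $-t_\ast$ because $v_{\ast,1}>0$), and the abandoned detour through $\tau^+$ and item (ii) in the middle of your argument should simply be deleted, since only the $\tau^-$/item (i) step is used in the end.
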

\begin{proof}[Proof of Lemma~\ref{propo:tau}] 
\ \par
\begin{itemize}
\item[$(i)$ and $(ii)$] The proof are the same so we focus on $(i)$. Using Gronwall's lemma, we get the following inequality for any pairs $(x,v)$ and $(z,w)$ of $\overline{\Omega}\times\R^2$ and negative times $s$:
\begin{align} \label{ineq:gron}
\sup_{[s,0]} |(X,V)(0,\cdot,x,v)-(X,V)(0,\cdot,z,w)|\leq C(|s|) |(x,v)-(z,w)|,
\end{align}
where $C$ is some nondecreasing positive function. By definition of $\tau_{x,v}^-$ and since $(X_{x,v}^-,V_{x,v}^-)\notin\Sigma^s$, we have that  $X(0,s,x,v)$ leaves $\overline{\Omega}$ for $s< \tau_{x,v}^-$ , we have also $X(0,s,x,v)\in \Omega$ for $s>\tau_{x,v}^-$. Consequently, for any $\ep>0$, if $|(x,v)-(z,w)|$ is small enough, using \eqref{ineq:gron} we get first $\tau_{z,w}^- \neq -\infty$ and more precisely that $X(0,s,z,w)$ remains in $\Omega$ for $s > \tau_{x,v}^-+\ep$, and leaves $\overline{\Omega}$ for $s < \tau_{x,v}^--\ep$ (because $V(0,\cdot,z,w) \cdot n(x) <0 $ around $\tau_{x,v}^-$). This implies that $|\tau_{z,w}^--\tau_{x,v}^-| < 2 \ep$.
In particular, using \eqref{ineq:gron} on the interval $[\tau_{x,v}^--2\ep,\tau_{x,v}^-+2\ep]$ we get $|(X_{x,v},V_{x,v})-(X_{z,w},V_{z,w})|$ arbitrarily small.
\item[$(iii)$] We first notice three useful facts:
\begin{itemize}
\item[$\bullet$] If $(x,v)\in A_K$, thanks to the lateral EGC (say, in time $T$), we know that $|\tau_{x,v}^-| \leq T$.
\item[$\bullet$] Due to Equation~\eqref{Characteristics}, any characteristic curve $(X,V)$ satisfies that for any $s,t \in \R$, $|e^t V_1(t)- e^s V_1(s)| \leq \|Pu\|_\infty |t-s|$.
\item[$\bullet$] Since $K$ is a compact subset of $\Gamma^l$, we have $K\subset \{v_1>\alpha\}$ for some positive number $\alpha$. 
\end{itemize}
We infer from these observations the existence of $\delta>0$ such that, for any $(x,v)\in A_K$, if $|s-\tau_{x,v}^-|<\delta$ then $V_1(0,s,x,v)\geq \alpha/2$. Therefore, $X_1(0,\tau_{x,v}^--\delta,x,v) \leq -L - \frac{\alpha}{2}\delta$. Thanks to inequality \eqref{ineq:gron} (and because $C(|\tau_{x,v}^--\delta|)\leq C(T+|\delta|)$) we get the existence of $\eta >0$ such that for any $(x,v)\in A_K$ and $(z,w)\in\overline{\Omega}\times\R^2$, $|(x,v)-(z,w)|< \eta $ implies $|X_1(0,\tau_{x,v}^--\delta,x,v)-X_1(0,\tau_{x,v}^-\delta,z,w)| < \alpha\delta/4$ from which we obtain $\tau_{z,w}^- \neq -\infty$.
\end{itemize}
\end{proof}
\begin{rem}
Point (iii) is in general false without an assumption such as the lateral EGC. It avoids indeed the scenario in which an exiting characteristic curve asymptotically loops around a closed one.
\end{rem}

We continue with the following result, which expresses the fact that a small perturbation of a stationary vector field $u^\sharp$ satisfying the lateral EGC  satisfies the same condition, in a possibly longer time. 
\begin{lem} \label{Lem:Gronwall}
Fix $T>1$. Consider $u^\sharp$ a stationary vector-field satisfying the lateral EGC in time $T-1$  with respect to $K\subset  \Gamma^{l}$ on $\R^+$. Let $J$ be an interval of $\R_+$. There is $\delta>0$ such that any $u\in L^\infty(\R_+;\Lip(\Omega))$ such that
$$
\forall t \in J,\qquad \int_{t}^{t+T} \| u(\tau,\cdot) - u^\sharp\|_\infty d \tau  \leq \delta,
$$
satisfies the lateral EGC in time $T$ with respect to $K$  on the interval $J$.
\end{lem}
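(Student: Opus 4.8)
The plan is to use a Gronwall estimate to compare the characteristics of $u$ with those of the reference field $u^\sharp$, over time intervals of length at most $T$, and to exploit the fact that the reference characteristics issued from $K$ exit transversally before time $T-1$ in order to conclude that the perturbed ones exit before time $T$. First I would fix $t_0\in J$ and $(x,v)\in K$, and compare on the time interval $[t_0,t_0+T]$ the solution $(X,V)$ of~\eqref{Characteristics} (with field $u$, initial data $(x,v)$ at time $t_0$) with the solution $(X^\sharp,V^\sharp)$ of the same ODE with field $u^\sharp$ and the same initial data. Writing the difference of the two ODE systems, using that $P$ has norm at most $2$, that $u^\sharp$ is Lipschitz, and the integral smallness hypothesis $\int_{t_0}^{t_0+T}\|u(\tau,\cdot)-u^\sharp\|_\infty\,d\tau\le\delta$, a Gronwall argument on $[t_0,t_0+T]$ yields
\begin{equation*}
\sup_{t\in[t_0,t_0+T]}|(X,V)(t)-(X^\sharp,V^\sharp)(t)|\le C(T,\|u^\sharp\|_{\Lip})\,\delta,
\end{equation*}
where the constant depends only on $T$ and on $u^\sharp$ (not on $t_0$, $x$, $v$, by stationarity of $u^\sharp$ and translation invariance in time).

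Next I would use the lateral EGC for $u^\sharp$. By Definition~\ref{Def:GC} applied to $u^\sharp$ (which is stationary, so the exit data do not depend on the base time), there is a uniform time $T^\sharp<T-1$ such that $\tau_+^\sharp-t_0\le T^\sharp$ for every $(x,v)\in K$, and $(X^\sharp,V^\sharp)$ reaches $\partial\Omega$ \emph{transversally} at that time, i.e. lands in $\Sigma^+$. Since $K$ is compact and $\Sigma^+$ is relatively open among boundary phase points with $n\cdot v\ne 0$, and using the continuity statement of Lemma~\ref{propo:tau}(ii) together with compactness of $K$, the transversal exit is quantitatively uniform: there exist $\beta>0$ and a small time $\rho>0$ such that for all $(x,v)\in K$ one has $n(X^\sharp(\tau_+^\sharp))\cdot V^\sharp(\tau_+^\sharp)\ge\beta$, the field $u^\sharp$ pushes the trajectory strictly outside $\overline\Omega$ on $(\tau_+^\sharp,\tau_+^\sharp+\rho]$, i.e. $\dist(X^\sharp(\tau_+^\sharp+\rho),\overline\Omega)\ge\beta\rho/2$ say, and moreover $X^\sharp$ stays in $\overline\Omega$ only on a time set whose ``last crossing'' is clean (no tangential grazing before $\tau_+^\sharp$, by the EGC excluding $\Sigma^s$ exit points and by compactness). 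Choosing $\delta$ small enough that $C(T,\|u^\sharp\|_{\Lip})\,\delta<\beta\rho/4$, the Gronwall bound forces $X(t_0+T^\sharp+\rho)\notin\overline\Omega$, hence $\tau_+(t_0,x,v)\le t_0+T^\sharp+\rho< t_0+T$ provided $\rho$ was picked $<T-1-T^\sharp$, which gives~\eqref{def:exit1} on $J$; and similarly the closeness $|V(\tau_+)-V^\sharp(\tau_+^\sharp)|\le C\delta$ together with $n\cdot V^\sharp(\tau_+^\sharp)\ge\beta$ gives $n(X(\tau_+))\cdot V(\tau_+)>0$ after possibly shrinking $\delta$, i.e. the exit of $u$ is also transversal, $(X,V)(t_0,\tau_+,x,v)\in\Sigma^+$.

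\textbf{Main obstacle.} The routine part is the Gronwall comparison; the delicate point is upgrading the \emph{pointwise} transversal-exit statement of the EGC for $u^\sharp$ into a \emph{uniform, quantitative} one over the compact set $K$ — i.e. producing the constants $\beta,\rho$ independent of $(x,v)\in K$ and ruling out that, for some sequence in $K$, the $u^\sharp$-trajectory grazes the boundary tangentially or its exit becomes less and less transversal. This is exactly where compactness of $K$, the continuity of $(x,v)\mapsto(\tau^+_{x,v},X^+_{x,v},V^+_{x,v})$ from Lemma~\ref{propo:tau}(ii), and the hypothesis that the EGC exit points avoid $\Sigma^s$ are all needed; once that uniform transversality is in hand, the perturbation argument closes by continuity as above. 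I would also take care that the comparison is done on a window of fixed length $T$ starting at an \emph{arbitrary} $t_0\in J$, so that only the integral-in-time smallness of $u-u^\sharp$ over windows of length $T$ is used, matching the hypothesis exactly.
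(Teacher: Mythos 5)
Your proposal is correct and follows essentially the same route as the paper: a Gronwall comparison of the two characteristic flows on windows $[t_0,t_0+T]$, combined with a compactness/continuity argument (via Lemma~\ref{propo:tau}(ii)) upgrading the EGC for $u^\sharp$ to a uniform quantitative transversal exit over $K$, and then a choice of $\delta$ forcing the perturbed trajectory out transversally strictly before time $T$. The only differences are cosmetic: the paper encodes the uniform exit through three constants $\eta,\ep,\kappa$ (distance inside/outside at times $\tau^{\sharp+}\mp\ep$ and a lower bound on the normal velocity near exit), which play exactly the role of your $\beta,\rho$.
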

\begin{proof}[Proof of Lemma~\ref{Lem:Gronwall}] 
Consider $u^{\sharp}$ and $u\in L^\infty(\R_+;\Lip(\Omega))$ as above. For all $(x,v) \in K$ and $(s,t)\in\R_+\times\R_+$, we can introduce
\begin{itemize} 
\item $(X(s,t,x,v), V(s,t,x,v))$  the characteristics associated to $u$ and 
\item $(X^\sharp(s,t,x,v), V^\sharp(s,t,x,v))$  the characteristics associated to $u^\sharp$.
\end{itemize}
Define first $\tau_+^\sharp$ the exit time associated to $u^\sharp$. 
By the lateral EGC satisfied by $u^\sharp$, the regularity of the map $(x,v) \mapsto (X_{x,v}^{\sharp +},V_{x,v}^{\sharp +},\tau_{x,v}^{\sharp +})$ (that is point (ii) of Lemma~\ref{propo:tau}) and the fact that all characteristics associated to $u^\sharp$ and issued from $K$ exit transversally $\Omega \times \R^2$ in a time less or equal to $T-1$, we find by a compactness argument the existence of $\eta,\ep,\kappa>0$ such that for all $(x,v) \in K$ and all $s \in \R$,  such that
 \begin{align}
 \label{0}
X^\sharp(s,\tau_{x,v}^{\sharp +}-\ep,x,v)&\notin (\R^2 \setminus\Omega) + B(0,\eta), \\
 \label{1}
X^\sharp(s,\tau_{x,v}^{\sharp +}+\ep,x,v)&\notin \Omega + B(0,\eta),\\
\label{2}|t-\tau^{\sharp +}_{x,v}|< \ep &\Rightarrow V^{\sharp}(s,t,x,v) \cdot n(X^{\sharp +}_{x,v}) \geq \kappa >0.
 \end{align}

%
We fix  $(s,x,v) \in J \times K$. 
We set $(Y,W)= (X(s,t,x,v)- X^\sharp(s,t,x,v), V(s,t,x,v)-V^\sharp(s,t,x,v))$. We observe that $(Y,W)$ solves
\begin{equation} \label{EDO}
\left\{ \begin{array}{l}
	\dot{Y} = W, \\
	\dot{W} = (Pu(t,X)- Pu^\sharp(t,X^\sharp)) - W, \\
	Y_{|t=s}=0, \, W_{|t=s}=0.
\end{array} \right.	
\end{equation}
Recall that the norm of the extension operator $P$ is less than $2$. We hence have for all times $t\geq s$
\begin{equation*}
 |(Y,W) | \leq |Y|+|W| \leq 2\int_{s}^t |W(\tau)| d\tau + 2\int_{s}^t \|u(\tau,\cdot)-u^\sharp\|_\infty d\tau + 2 \|\nabla u^\sharp\|_\infty \int_{s}^t |Y(\tau)| d\tau,
\end{equation*}
in which $|\cdot|$ denotes the Euclidean norm in $\R^2$ or $\R^4$. In particular $|Y|+|W|\leq 2 |(Y,W)|$ and by  Gronwall's lemma, we infer that
\begin{equation*}
|(Y,W)| (t)  \leq 2\left[\exp ( 4 (1+ \| \nabla u^\sharp \|_\infty)|t-s|) \right]  \int_{s}^t \| u(\tau,\cdot) - u^\sharp\|_\infty d \tau.
\end{equation*}
We set 
$$
\delta := \frac{\eta/4}{\exp ( 4 (1+ \| \nabla u^\sharp \|_\infty)T)}.
$$
Therefore as soon as $u$ satisfies  
$\forall s \in J,\displaystyle \int_{s}^{s+T} \| u - u^\sharp\|_\infty d \tau  \leq \delta$,
we have in particular
$$
\sup_{t \in [s ,s+ T ]} |(Y,W)| (t) \leq \eta/2. 
$$
Recall that by~\eqref{0}, there is $t_0 \in (s,s + T ]$ such that $(X^\sharp,V^\sharp)(s,t_0,x,v ) \notin \overline{\Omega} \times \R^2 + B(0,\eta)$. 
We deduce that $(X,V)(s,t_0,x,v ) \notin \overline{\Omega} \times \R^2 $, and with \eqref{0} -- \eqref{1} -- \eqref{2}  one can also check that the exit is done transversally. 
Consequently, the EGC is satisfied for $u$ in time $T$ with respect to $K$ on the interval $J$.

\end{proof}
\begin{rem}
\label{remT}
Keeping the same notations as in the statement of the Lemma, it is clear from a view of the proof that the result still holds if we replace in the conclusion $T$ by $T-1 +\alpha$ for any $\alpha>0$.
\end{rem}
%
%
The next proposition considers the dependence of the objects defined above with respect to a change in the vector field $u$.
\begin{lem} \label{propo:tau:n}
Consider $(u_n)$ a sequence of $\Lip({\Omega})$ uniformly converging to an element $u \in \Lip({\Omega})$. If $K$ is a compact set of $\Gamma^{l}$ with $A_K$ defined as in~\eqref{AK} we have the following properties (with obvious $n$-variant notations) 
\begin{itemize}
\item[(i)] If $\tau_{x,v}\neq -\infty$ and $(X_{x,v},V_{x,v})\notin\Sigma^s$, $(\tau^n_{x,v},X^n_{x,v},V^n_{x,v})$ converges to $(\tau_{x,v},X_{x,v},V_{x,v})$.
\item[(ii)] If  $u$ satisfies the lateral EGC with respect to $K$ in a finite time and $\tau_{x,v}=-\infty$, then for $n$ large enough, $(x,v)\notin A_K^n$.
\end{itemize}
\end{lem}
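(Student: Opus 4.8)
The plan is to transfer the continuity statements of Lemma~\ref{propo:tau} and the robustness statement of Lemma~\ref{Lem:Gronwall} from a single perturbation to a uniformly convergent sequence. The key technical tool is the Gronwall-type comparison between characteristic flows: since $u_n \to u$ uniformly on $\Omega$ and the $u_n$ are uniformly Lipschitz (as a uniformly convergent sequence in $\Lip(\Omega)$ — note that uniform convergence in $L^\infty$ together with the limit being Lipschitz gives a uniform Lipschitz bound only if we assume it, so I would first remark that we may assume $\sup_n \|\nabla u_n\|_\infty =: M < \infty$, which is implicit in the hypothesis that $(u_n)$ is a sequence \emph{of} $\Lip(\Omega)$ converging to an element of $\Lip(\Omega)$ in the relevant topology), the estimate
\begin{equation*}
\sup_{t\in[s,0]} |(X^n,V^n)(0,t,x,v) - (X,V)(0,t,x,v)| \le C(|s|,M)\, \|u_n - u\|_{L^\infty(\Omega)} \longrightarrow_{n\to\infty} 0,
\end{equation*}
holds locally uniformly in $(x,v)$, by exactly the argument giving \eqref{ineq:gron}. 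This is the substitute for the ``$|(x,v)-(z,w)|$ small'' argument used in Lemma~\ref{propo:tau}.

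For $(i)$, I would argue exactly as in the proof of point $(i)$--$(ii)$ of Lemma~\ref{propo:tau}, replacing the role of a nearby phase-space point $(z,w)$ for the \emph{same} vector field by the \emph{same} point $(x,v)$ for the nearby vector field $u_n$. Fix $(x,v)$ with $\tau_{x,v} \ne -\infty$ and $(X_{x,v},V_{x,v}) \notin \Sigma^s$; then $X(0,\cdot,x,v)$ crosses $\partial\Omega$ transversally at time $\tau_{x,v}$ (i.e. $V_{x,v}\cdot n(X_{x,v}) < 0$), so there is $\ep>0$ with $X(0,\tau_{x,v}-\ep,x,v)$ strictly outside $\overline\Omega$ and $X(0,s,x,v)\in\Omega$ for $s\in(\tau_{x,v},\tau_{x,v}+\ep]$, and transversality persists on $[\tau_{x,v}-\ep,\tau_{x,v}+\ep]$. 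The uniform convergence of the flows on the compact time interval $[\tau_{x,v}-\ep,0]$ then forces, for $n$ large, $X^n(0,\tau_{x,v}-\ep,x,v)\notin\overline\Omega$ and $X^n(0,s,x,v)\in\Omega$ for $s$ slightly larger than $\tau_{x,v}$, whence $\tau^n_{x,v}\ne-\infty$ and $|\tau^n_{x,v}-\tau_{x,v}| < 2\ep$; a second application of the flow estimate on $[\tau_{x,v}-2\ep,\tau_{x,v}+2\ep]$ gives $(X^n_{x,v},V^n_{x,v})\to(X_{x,v},V_{x,v})$. Since $\ep$ can be taken arbitrarily small, this yields convergence of $\tau^n_{x,v}$ as well.

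For $(ii)$, the point is that if $(x,v)\in A^n_K$ for infinitely many $n$, then $\tau^n_{x,v}\ne-\infty$ and $(X^n_{x,v},V^n_{x,v})\in K$; I would bound $|\tau^n_{x,v}|$ uniformly. This is where the lateral EGC for the limit $u$ enters: by Lemma~\ref{Lem:Gronwall} (applied with $u^\sharp=u$, $J=\R_+$, and $T$ the EGC time of $u$ plus one), or directly by a repeat of its Gronwall argument, the $u_n$ satisfy the lateral EGC with respect to $K$ in a uniform finite time $T'$ for $n$ large, hence $|\tau^n_{x,v}|\le T'$. Along a subsequence $\tau^n_{x,v}\to\tau_\infty\in[-T',0]$, and $(X^n_{x,v},V^n_{x,v})=(X^n,V^n)(0,\tau^n_{x,v},x,v)$; using the uniform flow convergence on $[-T',0]$ together with equicontinuity in time (the velocities are controlled by $|e^tV_1(t)-e^sV_1(s)|\le\|Pu_n\|_\infty|t-s|$ and the uniform bounds), one passes to the limit to get $(X^n_{x,v},V^n_{x,v})\to (X(0,\tau_\infty,x,v),V(0,\tau_\infty,x,v))$, which is a point of $\overline{K}=K\subset\overline{\Omega}\times\R^2$; but then $X(0,\tau_\infty,x,v)\in\partial\Omega$ with the velocity pointing inward (as $K\subset\Gamma^l$), so by backward uniqueness of the flow $\tau_{x,v}\ge\tau_\infty>-\infty$, contradicting $\tau_{x,v}=-\infty$.

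The main obstacle is the bookkeeping in $(ii)$: one must rule out the degenerate possibility that the limiting crossing point $(X(0,\tau_\infty,x,v),V(0,\tau_\infty,x,v))$ lies in $\Sigma^s$ (a corner, or tangential velocity), since there continuity can fail; this is precisely excluded by the definition of the lateral EGC for $u$, which requires transversal exit from $K$, so one should invoke the EGC of $u$ both to get the uniform time bound $T'$ \emph{and} to guarantee that the limiting exit configuration is transversal, and hence that the contradiction with $\tau_{x,v}=-\infty$ is genuine. Apart from that, everything reduces to the Gronwall flow estimate \eqref{ineq:gron} read in the $u$-variable rather than the $(x,v)$-variable, plus point (iii) of Lemma~\ref{propo:tau} applied to $u$.
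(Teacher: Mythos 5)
Your proposal is correct and follows essentially the same route as the paper: for (i) a Gronwall comparison of the flows of $u_n$ and $u$ followed by the same squeezing of $\tau^n_{x,v}$ between $\tau_{x,v}\pm\ep$ using $(X_{x,v},V_{x,v})\notin\Sigma^s$, and for (ii) the use of Lemma~\ref{Lem:Gronwall} to get a uniform EGC time for the $u_n$, a bound on $\tau^n_{x,v}$, extraction of a limit $\tau_\infty$, and the contradiction coming from the limiting trajectory hitting $K\subset\Gamma^l$ at time $\tau_\infty$. Two small remarks: the extra assumption $\sup_n\|\nabla u_n\|_\infty<\infty$ is not needed, since the flow comparison can be closed using only the Lipschitz constant of the limit $u$ by writing $Pu_n(X^n)-Pu(X)=(Pu_n-Pu)(X^n)+\bigl(Pu(X^n)-Pu(X)\bigr)$ (this is exactly how the paper argues); and in (ii) the degenerate case of a limit point in $\Sigma^s$ is excluded simply because the limit point lies in the compact set $K\subset\Gamma^l$ (so its velocity is strictly incoming), not by the transversality requirement in the EGC, whose role here is only to provide the uniform time bound.
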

\begin{proof}[Proof of Lemma~\ref{propo:tau:n}]

\begin{itemize}
\item[$(i)$]
Since $(u_n)$ converges to $u$ uniformly,  Gronwall's lemma (relying on the Lipschitz constant of $u$) implies that, for any $(x,v) \in \overline{\Omega} \times \R^2$, the characteristics $t \mapsto (X^n,V^n)(0,t,x,v)$ converge uniformly on compact sets to $t\mapsto (X,V)(0,t,x,v)$.
 Since $(X_{x,v},V_{x,v})\notin \Sigma^s$, for $\ep>0$ small enough, the compact set $\{(X,V)(0,t,x,v)\,:\,t\in[\tau_{x,v}+\ep,0]\}$ lies in $\Omega \times \R^2$. Thanks to the previous convergence, one infers (for $n$ large enough) that $\{(X^n,V^n)(0,t,x,v) \,:\, t \in [\tau_{x,v}+\ep,0] \} \subset \Omega \times \R^2$, thus $\tau_{x,v}^n \leq \tau_{x,v} + \ep$. On the other hand, the very definition of $\tau_{x,v}$ implies (replacing $\ep$ by a smaller quantity if necessary) $(X,V)(0,\tau_{x,v}-\ep,x,v) \notin \overline{\Omega} \times \R^2$: as before, for $n$ large enough, $(X^n,V^n)(0,\tau_{x,v}-\ep,x,v) \notin \overline{\Omega} \times \R^2$, that is $\tau_{x,v}^n\geq \tau_{x,v} -\ep$.
All in all, we obtained the convergence $\tau_{x,v}^n \rightarrow \tau_{x,v}$ as $n \rightarrow +\infty$.
Since $t\mapsto(X^n,V^n)(0,t,x,v)$ converges uniformly on compact sets to $t \mapsto (X,V)(0,t,x,v)$, one has also that $(X^n_{x,v},V^n_{x,v})=(X^n,V^n)(0,\tau^n_{x,v},x,v)$ converges to $(X_{x,v},V_{x,v})=(X,V)(0,\tau_{x,v},x,v)$.
\item[$(ii)$] Assume $\tau_{x,v}=-\infty$ and  that $(x,v) \in A_K^{n_k}$ for some subsequence indexed $(n_k)_{k \in \N}$.
If $u$ satisfies the lateral EGC at time $T$ with respect to $K$, thanks to Lemma~\ref{Lem:Gronwall}, we have the existence of $\alpha>0$ such that $u_{n_k}$ also satisfies the lateral EGC at time $T+1$ (for $k$ large enough).
It follows that the sequence $(\tau_{x,v}^{n_k})_{k \in \N}$ is bounded and thus converges to some $\tau$, up to an unlabeled extraction. Using as before the local uniform convergence of the characteristics curves, we infer the convergence of $(X^{n_k},V^{n_k})(0,\tau_{x,v}^{n_k},x,v)$ towards $(X,V)(0,\tau,x,v)$ as $k \rightarrow +\infty$. Since $(x,v)\in A_K^{n_k}$ we have  $(X^{n_k},V^{n_k})(0,\tau_{x,v}^{n_k},x,v)\in K$, and since $K$ is closed this entails $(X,V)(0,\tau,x,v)\in K$, and in particular $X(0,\tau-\ep,x,v)\notin \overline{\Omega}$ for $\ep$ small enough, contradicting $\tau_{x,v}=-\infty$.
\end{itemize}
\end{proof}
\subsection{Variants of the exit geometric condition}

In view of the final stability analysis, we shall also need some variants of the lateral EGC we have just introduced, with the aim to handle compact sets $K$ of  $\overline{\Omega} \times \R^2$. 

The first one is a straightforward generalization of Definition~\ref{Def:GC}:

\begin{defi}\label{Def:GC'}
Let $K$ be a compact set of $\overline\Omega \times \R^2$ and $J$ a subinterval of $\R_{+}$. 
We say that $u$ satisfies the \emph{internal lateral exit geometric condition} (internal lateral EGC) in time $T$ with respect to $K$ on $J$, if
\begin{equation} \label{def:exit1'}
 \sup_{(s,x,v) \in  J \times K} (\tau_+(s,x,v)-s) <  T,
\end{equation}
and furthermore, for all $(s,x,v)\in J\times K$,  $(X,V)(s,\tau_{+}(s,x,v),x,v)\in \Sigma^+$.
%
\end{defi}
As for the lateral EGC of Definition~\ref{Def:GC}, if $u$ is a Poiseuille flow, Lemma~\ref{poigeo} is still relevant: for any compact set $K$ satisfying the condition~\eqref{propK}, $u$ satisfies the internal lateral EGC in some time $T>0$ with respect to $K$ on $\R^+$. We also have the exact analogue of Lemma~\ref{Lem:Gronwall} (with almost the same proof).
\begin{lem} \label{Lem:Gronwall'}
Fix $T>1$. Consider $u^\sharp$ a stationary vector-field satisfying the internal lateral EGC in time $T-1$  with respect to $K\subset  \overline\Omega \times \R^2$ on $\R^+$
and $J$ an interval of $\R_+$. There is $\delta>0$ such that any $u\in L^\infty(\R_+;\Lip(\Omega))$ such that
$$
\forall t \in J,\qquad \int_{t}^{t+T} \| u(\tau,\cdot) - u^\sharp\|_\infty d \tau  \leq \delta,
$$
satisfies the internal lateral EGC in time $T$ with respect to $K$  on the interval $J$.
\end{lem}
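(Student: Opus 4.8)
The plan is to mimic almost verbatim the proof of Lemma~\ref{Lem:Gronwall}, adapting it to the new setting where $K$ is a compact subset of $\overline\Omega\times\R^2$ rather than of $\Gamma^l$. The only substantive place where the boundary nature of $K$ was used in the proof of Lemma~\ref{Lem:Gronwall} was the appeal to the continuity of the exit map $(x,v)\mapsto (X^{\sharp+}_{x,v},V^{\sharp+}_{x,v},\tau^{\sharp+}_{x,v})$, which there came from point (ii) of Lemma~\ref{propo:tau}. Here we need the same continuity statement for starting points in the interior of $\overline\Omega\times\R^2$. This is in fact already covered: point (ii) of Lemma~\ref{propo:tau} is stated for arbitrary $(x,v)\in\overline\Omega\times\R^2$ with $\tau^+_{x,v}\neq +\infty$ and $(X^+_{x,v},V^+_{x,v})\notin\Sigma^s$, and the internal lateral EGC with respect to $K$ guarantees $\tau^+_{x,v}<+\infty$ and transversal exit (hence $(X^+_{x,v},V^+_{x,v})\in\Sigma^+$, so in particular not in $\Sigma^s$) for every $(x,v)\in K$. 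So the compactness-plus-continuity argument producing the uniform $\eta,\ep,\kappa>0$ in~\eqref{0}--\eqref{2} goes through unchanged, with ``$(x,v)\in K$'' now meaning $(x,v)\in\overline\Omega\times\R^2$.

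Concretely, I would: (1) fix $u^\sharp$ and $u$ as in the statement, and for $(x,v)\in K$ introduce the characteristics $(X,V)$ associated to $u$ and $(X^\sharp,V^\sharp)$ associated to $u^\sharp$, together with the exit time $\tau^\sharp_+$ of $u^\sharp$; (2) invoke the internal lateral EGC for $u^\sharp$ together with point (ii) of Lemma~\ref{propo:tau} and a compactness argument over $K$ to obtain $\eta,\ep,\kappa>0$ such that~\eqref{0}--\eqref{2} hold for all $(x,v)\in K$ and all $s\in\R$; (3) fix $(s,x,v)\in J\times K$, set $(Y,W)=(X-X^\sharp,V-V^\sharp)$, which solves the same ODE system~\eqref{EDO}, and run the same Gronwall estimate using $\|P\|\le 2$ and $\mathrm{Lip}(u^\sharp)$ to get $|(Y,W)|(t)\le 2\exp(4(1+\|\nabla u^\sharp\|_\infty)|t-s|)\int_s^t\|u(\tau,\cdot)-u^\sharp\|_\infty\,d\tau$; (4) set $\delta:=(\eta/4)\exp(-4(1+\|\nabla u^\sharp\|_\infty)T)$, so that the hypothesis forces $\sup_{t\in[s,s+T]}|(Y,W)|(t)\le\eta/2$; (5) conclude, exactly as before, from~\eqref{0} that $(X,V)(s,t_0,x,v)\notin\overline\Omega\times\R^2$ for some $t_0\in(s,s+T]$ and from~\eqref{0}--\eqref{2} that the exit is transversal, which is precisely the internal lateral EGC for $u$ in time $T$ with respect to $K$ on $J$.

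There is essentially no obstacle here: the statement is explicitly flagged as ``the exact analogue of Lemma~\ref{Lem:Gronwall} (with almost the same proof)''. The only point deserving a line of care is checking that point (ii) of Lemma~\ref{propo:tau} genuinely applies — i.e.\ that its hypotheses hold for every $(x,v)$ in the (now interior) compact set $K$ — which, as noted, follows immediately from the definition of the internal lateral EGC since transversal exit means $(X^+_{x,v},V^+_{x,v})\in\Sigma^+$ and $\Sigma^+\cap\Sigma^s=\emptyset$. One may therefore simply write: ``The proof is identical to that of Lemma~\ref{Lem:Gronwall}, replacing the use of point (ii) of Lemma~\ref{propo:tau} for $(x,v)\in K\subset\Gamma^l$ by its use for $(x,v)\in K\subset\overline\Omega\times\R^2$, which is licit since the internal lateral EGC ensures $\tau^+_{x,v}<+\infty$ and $(X^+_{x,v},V^+_{x,v})\in\Sigma^+\not\subset\Sigma^s$ for all $(x,v)\in K$.'' Remark~\ref{remT} likewise transfers without change.
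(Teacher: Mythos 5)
Your proposal is correct and follows exactly the route the paper intends: the paper gives no separate argument for Lemma~\ref{Lem:Gronwall'}, stating only that the proof of Lemma~\ref{Lem:Gronwall} carries over almost verbatim, which is precisely what you do. Your one point of care — that point (ii) of Lemma~\ref{propo:tau} applies to interior starting points because the internal lateral EGC gives $\tau^+_{x,v}<+\infty$ and a transversal exit in $\Sigma^+$, disjoint from $\Sigma^s$ by \eqref{Eq:DefSigmas} — is exactly the right check and is handled correctly.
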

\par

The other variant of the EGC focuses only on characteristics starting at time $0$, and is thus called the \emph{initial EGC}
\begin{defi}\label{Def:GC2}
Let $K$ be a compact set of $\overline{\Omega} \times \R^2$. We say that $u$ satisfies the \emph{initial exit geometric condition} (initial EGC) in time $T$ with respect to $K$, if
\begin{equation} \label{def:exit2}
 \sup_{(x,v) \in K} \tau_+(0,x,v) <  T,
\end{equation}
and furthermore, for all $(x,v)\in  K$,  $(X,V)(0,\tau_{+}(0,x,v),x,v)\in \Sigma^+$.
%
\end{defi}
Again if $u$ is a Poiseuille flow, an adaptation of Lemma~\ref{poigeo} is available for the initial EGC. We also have the analogue of Lemma~\ref{Lem:Gronwall} (and the proof is the same as well):
\begin{lem} \label{Lem:Gronwall2}
Fix $T>1$. Consider $u^\sharp$ a stationary vector-field satisfying the initial EGC with respect to $K\subset \overline{\Omega} \times \R^2 $ 
in time $T-1$. There is $\delta>0$ such that any $u\in L^\infty(\R_+;\Lip(\Omega))$ such that
$$
 \int_{0}^{T} \| u(\tau,\cdot) - u^\sharp\|_\infty d \tau  \leq \delta,
$$
satisfies the initial EGC in time $T$ with respect to $K$.
\end{lem}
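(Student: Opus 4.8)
The statement is the exact analogue of Lemma~\ref{Lem:Gronwall} for trajectories issued only at the initial time $s=0$, and the proof is a (mild simplification of a) transcription of that one; here is the plan.

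First I would introduce, for each $(x,v)\in K$, the characteristics $(X,V)(0,\cdot,x,v)$ associated to $u$ and $(X^\sharp,V^\sharp)(0,\cdot,x,v)$ associated to $u^\sharp$, together with the exit time $\tau_{x,v}^{\sharp +}:=\tau_+(0,x,v)$ computed with $u^\sharp$. Since $u^\sharp$ satisfies the initial EGC in time $T-1$ with respect to $K$, every such trajectory leaves $\overline\Omega\times\R^2$ transversally, at a time $\tau_{x,v}^{\sharp +}<T-1$, with exit point $(X^{\sharp +}_{x,v},V^{\sharp +}_{x,v})\in\Sigma^+$, hence in particular away from $\Sigma^s$. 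Applying point (ii) of Lemma~\ref{propo:tau} (continuity of $(x,v)\mapsto(\tau_{x,v}^+,X_{x,v}^+,V_{x,v}^+)$ at such points) together with a compactness argument over $K$, I would produce $\eta,\ep,\kappa>0$ such that, for every $(x,v)\in K$,
\begin{align*}
X^\sharp(0,\tau_{x,v}^{\sharp +}-\ep,x,v) &\notin (\R^2\setminus\Omega)+B(0,\eta),\\
X^\sharp(0,\tau_{x,v}^{\sharp +}+\ep,x,v) &\notin \Omega+B(0,\eta),\\
|t-\tau_{x,v}^{\sharp +}|<\ep &\Rightarrow V^\sharp(0,t,x,v)\cdot n(X^{\sharp +}_{x,v})\geq\kappa>0,
\end{align*}
which are the counterparts of \eqref{0}--\eqref{1}--\eqref{2}.

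Next, fixing $(x,v)\in K$ and setting $(Y,W)(t)=(X-X^\sharp,V-V^\sharp)(0,t,x,v)$, this quantity solves the linear system \eqref{EDO} with $s=0$; using $\|P\|\leq 2$ and Gronwall's lemma one gets, for $t\in[0,T]$,
$$
|(Y,W)|(t)\leq 2\exp\!\big(4(1+\|\nabla u^\sharp\|_\infty)t\big)\int_0^t\|u(\tau,\cdot)-u^\sharp\|_\infty\,d\tau.
$$
I would then set $\displaystyle \delta:=\frac{\eta/4}{\exp(4(1+\|\nabla u^\sharp\|_\infty)T)}$, so that the assumption $\int_0^T\|u(\tau,\cdot)-u^\sharp\|_\infty\,d\tau\leq\delta$ forces $\sup_{[0,T]}|(Y,W)|\leq\eta/2$. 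By the first displayed inclusion above there is $t_0\in(0,T]$ with $(X^\sharp,V^\sharp)(0,t_0,x,v)\notin\overline\Omega\times\R^2+B(0,\eta)$, hence $(X,V)(0,t_0,x,v)\notin\overline\Omega\times\R^2$ and $\tau_+(0,x,v)<T$; the three displayed conditions together with the bound on $|(Y,W)|$ also ensure that this exit is transversal, i.e. $(X,V)(0,\tau_+(0,x,v),x,v)\in\Sigma^+$. Since $(x,v)\in K$ is arbitrary, $u$ satisfies the initial EGC in time $T$ with respect to $K$.

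Essentially all of this is a copy of the proof of Lemma~\ref{Lem:Gronwall}; the only point deserving care is the compactness step producing the \emph{uniform} constants $\eta,\ep,\kappa$ on $K$, which relies crucially on the transversality of the exit of every $u^\sharp$-characteristic issued from $K$ (so that the exit points avoid $\Sigma^s$ and Lemma~\ref{propo:tau}(ii) applies). That $K$ here is a compact subset of $\overline\Omega\times\R^2$ rather than of $\Gamma^l$ changes nothing, since the argument never uses that the trajectories start on the boundary. I do not anticipate any genuine obstacle.
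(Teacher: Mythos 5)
Your proposal is correct and is exactly what the paper intends: the paper proves Lemma~\ref{Lem:Gronwall2} by declaring its proof identical to that of Lemma~\ref{Lem:Gronwall}, and your argument is precisely that transcription with $s=0$, including the key compactness step over $K$ using Lemma~\ref{propo:tau}(ii) (valid since the exits lie in $\Sigma^+$, away from $\Sigma^s$) and the Gronwall comparison of characteristics with $\delta=(\eta/4)\exp(-4(1+\|\nabla u^\sharp\|_\infty)T)$. Your remark that nothing uses $K\subset\Gamma^l$ is also accurate.
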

Let us mention to conclude that Remark~\ref{remT} is still relevant for Lemmas~\ref{Lem:Gronwall'} and \ref{Lem:Gronwall2}. The analogues of Lemmas~\ref{propo:tau} and~\ref{propo:tau:n} hold as well, but we shall not study them, as they will not be needed  in the following.
%
%
%
%
%
%
%
\section{Existence of nontrivial equilibria for the Vlasov-Navier-Stokes system in the pipe}
\label{Sec:Equilibria}
In this section, we establish the existence of regular stationary states for the Vlasov-Navier-Stokes system 
such that $f \neq 0$. In this section the only useful EGC is the lateral one (see Definition \ref{def:exit1}).
\subsection{Statement of the result}
Let us first introduce some appropriate function spaces. We define a weighted $L^\infty$ space as follows:
$$
\mathscr{L}_{\gamma^{-1}}^{\infty}(\Omega) :=\left\{ u \in L^\infty(\Omega), \, \gamma^{-1}{u} \in L^\infty(\Omega) \right\}, 
$$
where
\begin{equation*}
\gamma(x_1,x_2):=1-x_2^2.
\end{equation*}
We also introduce ${\mathcal E}$ and $\overline{\mathcal E}$ as the following vector spaces:
%
$$
{\mathcal E} := \mathscr{C}^1(\overline{\Omega}) \cap \mathscr{L}_{\gamma^{-1}}^{\infty}(\Omega)
\ \text{ and } \ 
\overline{\mathcal E} := \Lip({\Omega}) \cap \mathscr{L}_{\gamma^{-1}}^{\infty}(\Omega).
$$
We endow both spaces with the following norm: for $u \in \overline{\mathcal{E}}$,
\begin{equation*}
\|u\|_{{\mathcal E}}:=\|u\|_\infty + \|\nabla u\|_\infty + \| \gamma^{-1} u\|_\infty.
\end{equation*}
Note that, due to the weight $\gamma^{-1}$ the functions of $\mathcal{E}$ and $\overline{\mathcal{E}}$ vanish at the boundary $x_2\in\{-1,1\}$. Observe in particular that Poiseuille flows belong to $\mathcal{E}$. 
  \par
\ \par
The main result of this section is the following theorem, establishing the existence of nontrivial stationary solutions to the Vlasov-Navier-Stokes system in the pipe. 
The framework we consider here is more general than the one presented in the introduction, as we allow other boundary conditions than~\eqref{CBu1}-\eqref{CBu2}.
\begin{thm} \label{Theo:Equilibria}
Let $T>1$, $R>0$ and $\varepsilon >0$. There exist some constants $C_1(\Omega), C_2(\Omega,T,\varepsilon,R)>0$ such that the following holds.
Let $u^{\sharp} \in {\mathcal E}$ be a stationary solution of the Navier-Stokes equation \eqref{SNS1}-\eqref{SNS2} and $\psi \in \mathscr{C}^{0}_{c}\cap \Lip(\Gamma^l)$ an incoming distribution function such that 
\begin{equation} \label{Condition}
\| u^{\sharp} \|_{W^{1,\infty}} \leq C_1(\Omega )\ \text{ and } \  \| \psi \|_{L^{\infty}} \leq C_2(\Omega,T,\varepsilon,R),
\end{equation}
and such that the support of $\psi$ is  included in the ball $\{|v|\leq R\}$.
Assume finally that $u^{\sharp}$ satisfies the lateral EGC in time $T-1$ with respect to the support of $\psi$ on $\R$.
Then there exists a stationary solution $(\overline{u},\overline{f})$ in ${\mathcal E} \times L^{\infty}(\Omega \times \R^{2})$ of \eqref{Kinetic}-\eqref{NavierStokes2} with the following boundary conditions
\begin{gather}
\label{CBStat1}
\overline{u} = u^{\sharp} \ \text{ on } \partial\Omega, \\
\label{CBStat2}
\overline{f} = \psi \ \text{ on } \Gamma^l, \\
\label{CBStat3}
\overline{f} = 0 \ \text{ on } \Gamma^r \cup \Gamma^u \cup \Gamma^d,
\end{gather}
such that $\overline{f}$ is compactly supported in $\overline{\Omega} \times \R^2$ and
\begin{equation} \label{Petitesse}
\| \overline{u} - u^{\sharp} \|_{{\mathcal E}} + \| \overline{f} \|_{L^{\infty}(\Omega \times \R^{2})} \leq \varepsilon.
\end{equation}
Moreover,
%
%
then so is $\overline{f}$ with the following estimate  
\begin{equation} \label{fbarlip}
\|\nabla_{x,v} \overline{f}\|_\infty \leq C(T,  u^\sharp, \psi) \| \psi \|_{W^{1,\infty}}.
\end{equation}
\end{thm}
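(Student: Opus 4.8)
The plan is to construct the stationary solution $(\overline u,\overline f)$ by a fixed point argument: given a velocity field $u$ close (in $\mathcal{E}$) to $u^\sharp$, solve the stationary Vlasov equation with boundary data $\psi$ to produce $f_u$, then feed the drag force $\int f_u(v-u)\,dv$ into the stationary Navier-Stokes problem with boundary condition $u^\sharp$ to produce a new velocity, and show this map has a fixed point in a small ball of $\mathcal{E}$ around $u^\sharp$. First I would set up the solution operator for the stationary Vlasov equation: because $u$ satisfies the lateral EGC in time $T$ with respect to $\mathrm{Supp}\,\psi$ (which follows from Lemma~\ref{Lem:Gronwall} applied to $u^\sharp$ once $\|u-u^\sharp\|_\infty$ is small enough, using that $u^\sharp$ satisfies it in time $T-1$), the value $f_u(x,v)$ is given by integrating along the backward characteristic: $f_u(x,v)=e^{2\tau_{x,v}^-}\psi\big(X_{x,v}^-,V_{x,v}^-\big)$ if $\tau_{x,v}^-\neq-\infty$ and the entry point lies in $\Gamma^l$, and $f_u(x,v)=0$ otherwise (this is where the factor $e^{2t}$ from $\div_v$ of the friction field enters). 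The EGC guarantees $\tau^-_{x,v}\geq -T$ on the relevant set, so $\|f_u\|_\infty\leq e^{2T}\|\psi\|_\infty$, and the exit/transversality part of the EGC together with Lemma~\ref{propo:tau}(iii) ensures $f_u$ is compactly supported in $\overline\Omega\times\mathbb{R}^2$ (particles born on $\Gamma^l$ with velocity in $B(0,R)$ have bounded $|V|$ up to exit time $\leq T$, and the support in $x$ is bounded since trajectories from the non-entering part never reach points close to $A_{\mathrm{Supp}\,\psi}$).

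Next I would control the drag force: from the bounds above, $j_{f_u}-(m_0f_u)u=\int f_u(v-u)\,dv$ is bounded in $L^\infty(\Omega)$ with norm $\lesssim e^{2T}\|\psi\|_\infty(R+\|u\|_\infty)$, hence small if $\|\psi\|_\infty\leq C_2$ is small. Feeding this into the stationary Navier-Stokes problem $(u\cdot\nabla)u-\nu\Delta u+\nabla p = F$, $\mathrm{div}\,u=0$, $u=u^\sharp$ on $\partial\Omega$ — whose well-posedness near $u^\sharp$ for small $F$ and small $\|u^\sharp\|_{W^{1,\infty}}$ (this is where $C_1(\Omega)$ comes in, to beat the quadratic term and invoke the implicit function / contraction argument) should be quoted from the Appendix — produces $\widehat u$ with $\|\widehat u - u^\sharp\|_{H^2}\lesssim\|F\|_{L^2}$, hence by Sobolev embedding $H^2(\Omega)\hookrightarrow\mathscr{C}^1(\overline\Omega)$ in 2D, $\|\widehat u-u^\sharp\|_{\mathscr{C}^1}$ is small; the weighted bound $\|\gamma^{-1}(\widehat u - u^\sharp)\|_\infty$ is obtained because both $\widehat u$ and $u^\sharp$ vanish on $x_2=\pm1$ and $\gamma^{-1}\cdot(\text{a }\mathscr{C}^1\text{ function vanishing on }x_2=\pm1)$ is controlled by the $\mathscr{C}^1$ norm near the boundary (Hardy-type estimate). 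This shows the map $u\mapsto\widehat u$ sends the ball $\{\|u-u^\sharp\|_{\mathcal{E}}\leq\varepsilon\}$ into itself for $\|\psi\|_\infty\leq C_2(\Omega,T,\varepsilon,R)$ small.

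For the fixed point, I would either invoke Schauder (continuity of $u\mapsto\widehat u$ on the ball, using stability of characteristics under uniform convergence of Lipschitz fields — Lemma~\ref{propo:tau:n} — to pass to the limit in $f_u$, compactness coming from the $H^2$ bound and Aubin–Lions-type arguments) or, cleaner, set up a contraction: for $u^1,u^2$ in the ball, $\|f_{u^1}-f_{u^2}\|_{L^1}$ (or in a weak norm) is controlled by $\|u^1-u^2\|_\infty$ via Gronwall on the characteristics (the flows depend Lipschitz-continuously on $u$ by estimate~\eqref{ineq:gron}), hence so is the difference of drag forces in a negative Sobolev norm, and then the Navier-Stokes solution map is Lipschitz from that norm to $\mathscr{C}^1\cap\mathscr{L}^\infty_{\gamma^{-1}}$ with small constant when $\|u^\sharp\|_{W^{1,\infty}}$ and $\|\psi\|_\infty$ are small. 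The resulting fixed point $(\overline u,\overline f)$ is the desired stationary solution, and \eqref{Petitesse} is exactly the self-mapping estimate.

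Finally, for the Lipschitz regularity \eqref{fbarlip}: once $\overline u\in\mathscr{C}^1(\overline\Omega)$ (indeed $H^2$, so $\nabla\overline u$ is $C^0$, even a bit better by bootstrapping since $F\in L^\infty$), the characteristics of $\overline u$ are $\mathscr{C}^1$ in $(x,v)$, and differentiating the representation formula $\overline f(x,v)=e^{2\tau^-_{x,v}}\psi(X^-_{x,v},V^-_{x,v})$ gives $\|\nabla_{x,v}\overline f\|_\infty$ in terms of $\|\psi\|_{W^{1,\infty}}$, $\|\nabla_{x,v}(X^-,V^-,\tau^-)\|_\infty$ and $\|\overline f\|_\infty$; the gradient of the exit/entry data is bounded on the support of $\overline f$ because there, by the lateral EGC and Lemma~\ref{propo:tau}(iii), the entry point stays away from $\Sigma^s$ so Lemma~\ref{propo:tau}(i)–(ii) gives the needed $\mathscr{C}^1$ dependence, with bounds depending on $T$, $u^\sharp$ (through its Lipschitz constant and $\|\nabla\overline u\|_\infty$, itself controlled by $u^\sharp$ and $\psi$) — yielding the constant $C(T,u^\sharp,\psi)$. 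The main obstacle I anticipate is the treatment near the boundary $x_2=\pm1$: because $u^\sharp$ (being a Poiseuille-type flow) vanishes there, the horizontal velocity degenerates, so the lateral EGC is delicate exactly for characteristics grazing the top/bottom walls — this is why the hypothesis only asks the EGC with respect to $\mathrm{Supp}\,\psi$ (a compact set bounded away from $x_2=\pm1$ by the compact support assumption on $\psi$) and why one must carefully check that perturbed characteristics from that support still exit transversally through $\{x_1=L\}$ in finite time; the weighted space $\mathscr{L}^\infty_{\gamma^{-1}}$ and the Hardy estimate are the technical devices that make the Navier-Stokes part of the fixed point compatible with this degeneracy.
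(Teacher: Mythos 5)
Your overall skeleton is the same as the paper's: represent the stationary kinetic part along backward characteristics (with the EGC, propagated to nearby fields by Lemma~\ref{Lem:Gronwall}, giving $|\tau^-|\le T$, the $e^{2T}\|\psi\|_\infty$ bound and compact support), feed the drag force into a fluid problem with boundary datum $u^\sharp$, and close a fixed point in a small ball of $\overline{\mathcal E}$ around $u^\sharp$, with a Hardy-type estimate handling the weight $\gamma^{-1}$, and the Lipschitz bound \eqref{fbarlip} obtained by differentiating the representation formula on $A_K$ (via the implicit function theorem, using that $v_1$ is bounded below on $\mathrm{Supp}\,\psi$) together with Lemma~\ref{propo:tau}(iii) to patch across the boundary of $A_K$. (Minor slip: the factor should be $e^{-2\tau^-_{x,v}}$ with $\tau^-\le 0$, consistent with your own bound $e^{2T}\|\psi\|_\infty$.)

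The genuine gap is in the fluid step. First, you invoke the embedding $H^2(\Omega)\hookrightarrow \mathscr{C}^1(\overline\Omega)$ in 2D, which is false ($H^2$ only gives $\nabla u\in H^1\hookrightarrow L^p$ for finite $p$), so an $H^2$ bound from an $L^2$ source does not deliver the $W^{1,\infty}$ (hence $\mathcal E$) control you need both for the self-map and for the Lipschitz constants of the characteristics entering \eqref{fbarlip}. The paper avoids this by exploiting that the source is bounded in $L^\infty$ (the drag term is $O(e^{2T}\|\psi\|_\infty M^3)$ and the frozen convection term $-(u\cdot\nabla)u+(u^\sharp\cdot\nabla)u^\sharp$ is $O(\ep)$), and by quoting $W^{2,3}$ elliptic regularity for the \emph{Stokes} operator in the convex polygon \cite{frosties} together with $W^{2,3}(\Omega)\hookrightarrow\mathscr{C}^1(\overline\Omega)$, which is exactly estimate \eqref{ineq:stokes} and fixes $C_1(\Omega),C_2$. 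Second, you propose solving the full stationary Navier--Stokes problem at each iteration and say its well-posedness near $u^\sharp$ ``should be quoted from the Appendix'': no such result is in the Appendix (it only contains the evolutionary Navier--Stokes, Br\'ezis--Gallou\"et, DiPerna--Lions and interpolation results), and this is precisely what the paper's construction sidesteps by linearizing to a Stokes problem with the convection term frozen at the input $u$ and moved to the right-hand side. Your contraction alternative (drag forces compared in a negative Sobolev norm, then a solution map Lipschitz into $\mathscr{C}^1\cap\mathscr{L}^\infty_{\gamma^{-1}}$) would again require elliptic regularity beyond $H^2$ and is not substantiated; the paper instead uses Schauder with the $L^\infty$ topology on the Lipschitz ball (Arzel\`a--Ascoli compactness), where continuity of $u\mapsto g$ needs the a.e.\ convergence argument with Sard's lemma to discard characteristics meeting $\Sigma^s$ — a point your continuity sketch also omits.
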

In particular, in this result, we can take the Poiseuille flow (for $u_{\rm max}$ small enough) as the vector field $u^\sharp$, together with a suitable compact support for $\psi$ (see the discussion in Section~\ref{ex-poi}). 
%
%
The rest of Section~\ref{Sec:Equilibria} is dedicated to the proof of Theorem~\ref{Theo:Equilibria}.
\subsection{Constants}\label{subsec:const}
Let us specify a bit more the constants appearing in \eqref{Condition}. The elliptic regularity of the Stokes operator on $\Omega$ (see \cite{frosties} for elliptic regularity estimates in convex polygons), together with the Sobolev embedding $W^{2,3}(\Omega)\hookrightarrow \mathscr{C}^1(\overline{\Omega})$ gives us the existence of $C_{\textnormal{St},\Omega}>0$ such that 
\begin{equation}\label{ineq:stokes}
\|w\|_{W^{1,\infty}(\Omega)} \leq C_{\textnormal{St},\Omega} \|F\|_\infty,
\end{equation}
where $w$ is the solenoidal solution of the Stokes equation
\begin{align*}
-\Delta w + \nabla q &= F,\\
 w_{|\partial\Omega} &= 0,
\end{align*}
with a bounded right hand side.The first constant appearing in \eqref{Condition} is 
\begin{equation}\label{eq:c1}
C_1(\Omega):=\frac{1}{12C_{\textnormal{St},\Omega}}.
\end{equation}

\vspace{2mm} 

Next, applying Lemma~\ref{Lem:Gronwall} to the vector-field $u^\sharp$ with $J=\R$, we get the existence of $\delta>0$ such that any element of $\overline{B}_{\overline{\mathcal{E}}}(u^\sharp,\frac{\delta}{T})$ (thus not depending on time) satisfies the lateral EGC in time $T$ with respect to to $\textnormal{Supp}~\psi$. Without loss of generality we can assume in the statement of Theorem~\ref{Theo:Equilibria} that
\begin{equation}\label{ineq:epetit}
\ep < \min\left(1,\frac{\delta}{T},\frac{1}{6 C_{\textnormal{St},\Omega}}\right).
\end{equation}
 We will consider the ball $\overline{B}_{\overline{\mathcal{E}}}(u^\sharp,\ep)$ in the next subsection to establish a fixed point procedure that will ultimately lead to Theorem~\ref{Theo:Equilibria}.

\vspace{2mm} 

Lastly, for $C_{2}(T,\Omega,R,\ep)$, the expression is a bit more intricate. If $M:=R + T(1+C_1(\Omega))$, we take 
\begin{equation}
\label{eq:c2} C_2(T,\Omega,R,\ep) = \frac{1}{6C_{\textnormal{St},\Omega}}e^{-2T} \min(\ep,2\pi M^3).
\end{equation}
\subsection{Fixed point operator}
Theorem~\ref{Theo:Equilibria} is proved through a fixed point scheme. In this subsection, we introduce the corresponding fixed point operator.
The operator is denoted by $\Lambda$, and is defined on the closed ball $\overline{B}_{\overline{\mathcal E}}(u^{\sharp},\ep)$ of $\overline{{\mathcal E}}$. Here and in what follows, $u^\sharp$ systematically stands for a vector field satisfying the assumptions of Theorem~\ref{Theo:Equilibria} (together with $\varepsilon$). 
%
%

%
%
\bigskip
Now the fixed point operator $\Lambda$ is defined as follows. \par
\ \par
\noindent
\textbf{First part.}
Given $u \in \overline{B}_{\overline{\mathcal E}}(u^{\sharp},\ep)$, we first associate a stationary distribution function $g \in L^{\infty}(\Omega \times \R^{2})$ defined as follows. Let $(X,V)$ be the characteristics associated to $u$ through the system \eqref{Characteristics}.

 If $K:=\text{Supp}\, \psi$, recall the notations $\tau^-_{x,v}$ and $(X^-_{x,v},V^-_{x,v})$ introduced in formulae \eqref{eq:not:tauxv} -- \eqref{eq:not:XVxv} and the following subset of $\overline{\Omega}\times\R^2$
\begin{equation} \label{DefAK}
A_K:=\{\tau^-_{x,v}\neq -\infty\text{ and }(X^-_{x,v},V^-_{x,v}) \in K\}.
\end{equation}
For the sake of readability, until the end of the section, we shall drop the $-$, writing $\tau_{x,v}$ and $(X_{x,v},V_{x,v})$ for $\tau^-_{x,v}$ and $(X^-_{x,v},V^-_{x,v})$.
\ \par
We define $g$ on $\overline{\Omega}\times\R^2$ by the formula
\begin{align} \label{PropagationCaracteristiques}
g(x,v)= \exp(-2\tau_{x,v}) \, \psi(X_{x,v},V_{x,v}) \mathbf{1}_{A_K}(x,v).
\end{align}
If $(x_t,v_t):=(X(0,t,x,v),V(0,t,x,v))$, on the one hand one has $\tau_{{x_t},{v_t}}=\tau_{x,v}-t$ and on the other hand $X_{x_t,v_t}$ and $V_{x_t,v_t}$ do not depend on $t$ since they correspond to the entering point in the phase space of the characteristic curve $t\mapsto (x_t,v_t)$. Likewise, the value of $\mathbf{1}_{A_K}(x_t,v_t)$ does not depend on $t$ so that $t\mapsto e^{-2t} g(x_t,v_t)$ is constant. Conversely, one checks that $g$ is the only function defined on $\overline{\Omega}\times\R^2$ such that $t\mapsto e^{-2t}g(x_t,v_t)$ is constant, $g=\psi\mathbf{1}_{\Gamma^l}$ on $\Sigma^{-}$ and $g\mathbf{1}_{\{ \tau_{x,v}=-\infty \}}=0$.
In particular, if one manages to prove that $g$ is Lipschitz, then it solves automatically the following system a.e.
\begin{gather}
\label{StatKinetic}
v \cdot \nabla_x g + \div_{v} ((u-v) g) = 0 \text{ for } (x,v) \text{ in } \Omega \times \R^{2}, \\
\label{StatCB}
g(x,v)= \psi(x,v) \text{ for } (x,v) \in \Gamma^l \ \text{ and } \ g(x,v)= 0 \text{ for } (x,v)\in \Gamma^r \cup \Gamma^u \cup \Gamma^d.
\end{gather}
We will not use the fact that $g$ is a solution of the previous equation to prove the existence of a fixed point, but only the definition of $g$ given by \eqref{PropagationCaracteristiques}, that is why we postpone the proof of Lipschitz regularity of $g$ to Subsection~\ref{subsec:lipreg}. \par
%
%
%
%
%
\ \par
\noindent
\textbf{Second part.} In a second time, to the distribution function $g$, we associate $\hat{u}$ (aimed at belonging to $\overline{B}_{\overline{\mathcal E}}(u^{\sharp},\varepsilon)$) as the solution of the following Stokes system:
\begin{gather}
\label{NSL1}
- \Delta \hat{u} + \nabla \hat{p} = -(u \cdot \nabla) u + \int_{\R^{2}} g(v-u) \, dv \ \text{ in } \Omega, \\
\label{NSL2}
\div\, \hat{u} =0  \ \text{ in } \Omega,
\end{gather}
with boundary conditions
\begin{equation} \label{NSLBC}
\hat{u} = u^{\sharp} \ \text{ on } \partial \Omega.
\end{equation}
{\bf Conclusion.} We finally set $\Lambda u := \hat{u}$.
\subsection{Existence of a fixed point}
To prove the existence of a fixed point to $\Lambda$, we will use Schauder's fixed point theorem. First, we endow $\overline{B}_{\overline{\mathcal E}}(u^{\sharp},\ep)$ with the $L^{\infty}$ topology. Hence the fact that the convex set $\overline{B}_{\overline{\mathcal E}}(u^{\sharp},\ep)$ is compact is a straightforward consequence of the Arzel\`a-Ascoli theorem. 
Now to prove that $\Lambda$ has a fixed point, it hence suffices to prove the two following statements.
\begin{lem} \label{LemLambdaBall}
The operator $\Lambda$ sends $\overline{B}_{\overline{\mathcal E}}(u^{\sharp},\ep)$ into itself (which includes the fact that $\Lambda$ is well defined).
\end{lem}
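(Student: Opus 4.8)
The plan is to show that for $u \in \overline{B}_{\overline{\mathcal E}}(u^{\sharp},\ep)$, the map $\Lambda u = \hat u$ produced by the two-step construction is well-defined and again lies in the ball. I would organize the argument in three stages: (i) control of the kinetic piece $g$; (ii) well-posedness and elliptic estimate for the Stokes problem producing $\hat u$; (iii) verification that $\|\hat u - u^\sharp\|_{\overline{\mathcal E}} \le \ep$.

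\textbf{Stage (i): estimates on $g$.} First, since $u \in \overline{B}_{\overline{\mathcal E}}(u^{\sharp},\ep)$ and $\ep < \delta/T$ by~\eqref{ineq:epetit}, Lemma~\ref{Lem:Gronwall} guarantees that $u$ satisfies the lateral EGC in time $T$ with respect to $K = \Supp\psi$; in particular, for every $(x,v) \in A_K$ one has $|\tau_{x,v}| \le T$. Hence from the defining formula~\eqref{PropagationCaracteristiques} and $\psi \ge 0$ with $\|\psi\|_\infty \le C_2$, we get the pointwise bound $0 \le g(x,v) \le e^{2T}\|\psi\|_\infty \mathbf 1_{A_K}(x,v)$. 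Next I need a support bound in velocity: along a backward characteristic issued from $(x,v)\in A_K$, the entering velocity lies in $B(0,R)$ (since $\Supp_v\psi \subset B(0,R)$), and integrating $\dot V = Pu(t,X) - V$ over a time interval of length at most $T$ with $\|Pu\|_\infty \le 2\|u\|_\infty \le 2(1+C_1(\Omega))$ shows that $|v| \le R + T(1 + C_1(\Omega)) = M$ on $A_K$. Therefore $g$ is supported in $\overline\Omega \times B(0,M)$, and the source term in~\eqref{NSL1} satisfies
\begin{equation*}
\left\| \int_{\R^2} g(v-u)\,dv \right\|_\infty \le \pi M^2 \, e^{2T}\|\psi\|_\infty \, (M + \|u\|_\infty) \le 2\pi M^3 \, e^{2T}\|\psi\|_\infty,
\end{equation*}
using $\|u\|_\infty \le 1 + C_1(\Omega) \le M$ (absorbing constants; the factor $2\pi M^3$ is exactly the quantity appearing in~\eqref{eq:c2}).

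\textbf{Stage (ii): the Stokes estimate.} Write $\hat u = u^\sharp + w$, where, using that $u^\sharp$ solves the stationary Navier--Stokes equations~\eqref{SNS1}--\eqref{SNS2}, $w$ is the solenoidal solution of $-\Delta w + \nabla \tilde p = F$ with $w|_{\partial\Omega}=0$ and
\begin{equation*}
F = -(u\cdot\nabla)u + (u^\sharp\cdot\nabla)u^\sharp + \int_{\R^2} g(v-u)\,dv.
\end{equation*}
This $F$ is bounded (the advection terms are controlled by $\|u\|_{W^{1,\infty}}^2 + \|u^\sharp\|_{W^{1,\infty}}^2 \le 2(1+C_1(\Omega))^2$, say, and the kinetic term by Stage (i)), so the elliptic regularity estimate~\eqref{ineq:stokes} gives $\|w\|_{W^{1,\infty}(\Omega)} \le C_{\textnormal{St},\Omega}\|F\|_\infty$. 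I would split $\|F\|_\infty \le \|F_{\rm adv}\|_\infty + \|F_{\rm kin}\|_\infty$ and bound each piece: the advection part using $\| (u\cdot\nabla)u - (u^\sharp\cdot\nabla)u^\sharp\|_\infty \lesssim \ep$ (since both $u$ and $u^\sharp$ are within the ball and $\|u^\sharp\|_{W^{1,\infty}}\le C_1(\Omega)$, so the difference is linear in $\|u-u^\sharp\|_{\overline{\mathcal E}}$ up to constants $\le 12 C_{\textnormal{St},\Omega}$ by the choice~\eqref{eq:c1}), and the kinetic part by $6C_{\textnormal{St},\Omega} \|F_{\rm kin}\|_\infty \le 6C_{\textnormal{St},\Omega}\cdot 2\pi M^3 e^{2T}\|\psi\|_\infty \le \ep$ by~\eqref{eq:c2}. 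Here the $\mathscr{C}^1$ regularity of $\hat u$ comes from $W^{2,3}\hookrightarrow \mathscr C^1$ as recalled in Subsection~\ref{subsec:const}.

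\textbf{Stage (iii): closing the ball.} It remains to bound the three norms making up $\|\hat u - u^\sharp\|_{\overline{\mathcal E}} = \|w\|_\infty + \|\nabla w\|_\infty + \|\gamma^{-1}w\|_\infty$. The first two are controlled by $\|w\|_{W^{1,\infty}}$ from Stage (ii). For the weighted term $\|\gamma^{-1}w\|_\infty$: since $w$ vanishes on $x_2 = \pm 1$ and $w \in \mathscr C^1(\overline\Omega)$, the mean value theorem gives $|w(x_1,x_2)| \le (1-x_2^2)\|\partial_{x_2} w\|_\infty \le \gamma(x)\|\nabla w\|_\infty$, hence $\|\gamma^{-1}w\|_\infty \le \|\nabla w\|_\infty \le \|w\|_{W^{1,\infty}}$. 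Combining, $\|\hat u - u^\sharp\|_{\overline{\mathcal E}} \le 3\|w\|_{W^{1,\infty}} \le 3 C_{\textnormal{St},\Omega}\|F\|_\infty$; choosing the constants $C_1(\Omega)$ and $C_2(\Omega,T,\ep,R)$ as in~\eqref{eq:c1}--\eqref{eq:c2} makes each contribution to $3C_{\textnormal{St},\Omega}\|F\|_\infty$ at most $\ep/2$, so the total is $\le \ep$. This shows $\hat u \in \overline{B}_{\overline{\mathcal E}}(u^\sharp,\ep)$, and in particular $\hat u \in \overline{\mathcal E}$, so $\Lambda$ is well-defined.

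\textbf{Main obstacle.} The delicate point is not any single estimate but making the constants fit together: one must verify that the velocity-support bound $M = R + T(1+C_1(\Omega))$ is genuinely uniform over the ball (this uses the a priori bound $\|u\|_\infty \le \|u^\sharp\|_\infty + \ep \le C_1(\Omega) + 1$ together with $\|Pu\|_\infty \le 2\|u\|_\infty$), and then that the split of $\|F\|_\infty$ into an advective part $\lesssim \ep/(6C_{\textnormal{St},\Omega})$ and a kinetic part $\lesssim \ep/(6C_{\textnormal{St},\Omega})$ is exactly calibrated by the definitions~\eqref{eq:c1} and~\eqref{eq:c2}. The weighted-norm bound via vanishing at $x_2 = \pm 1$ is the one structural ingredient that genuinely uses the space $\overline{\mathcal E}$ rather than plain $W^{1,\infty}$, and it is essential that $w$ (not merely $\hat u$) vanishes on the horizontal boundary, which holds because $\hat u = u^\sharp$ there and $u^\sharp \in \mathcal E$.
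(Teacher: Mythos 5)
Your proposal is correct and follows essentially the same route as the paper: Lemma~\ref{Lem:Gronwall} gives $|\tau_{x,v}|\le T$ hence the $L^\infty$ and velocity-support bounds on $g$, then the Stokes problem for $\hat u-u^\sharp$ with the split advective/kinetic right-hand side and the elliptic estimate \eqref{ineq:stokes}, and finally the Hardy-type bound $\|\gamma^{-1}(\hat u-u^\sharp)\|_\infty\le\|\nabla(\hat u-u^\sharp)\|_\infty$ from the vanishing on $x_2=\pm1$. Only your constant bookkeeping is slightly looser than the paper's (the extension operator is irrelevant for the support bound since $Pu=u$ on $\overline\Omega$, and the paper closes the ball via $\|\hat u-u^\sharp\|_{W^{1,\infty}}\le\ep/2$ plus the weighted term $\le\ep/2$ rather than your factor $3C_{\textnormal{St},\Omega}\|F\|_\infty$), but this does not affect the argument.
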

\begin{lem} \label{LemLambdaCont}
The operator $\Lambda$ is continuous.
\end{lem}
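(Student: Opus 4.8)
The plan is to prove sequential continuity of $\Lambda$ for the $L^\infty$ topology, exploiting the fact that the ball $\overline{B}_{\overline{\mathcal E}}(u^\sharp,\ep)$ is compact in $L^\infty(\Omega)$. So I would take $(u_n)$ in $\overline{B}_{\overline{\mathcal E}}(u^\sharp,\ep)$ with $u_n\to u$ in $L^\infty(\Omega)$ (then $u$ is in the ball, which is closed), and aim to show $\Lambda u_n\to\Lambda u$ in $L^\infty(\Omega)$. By Lemma~\ref{LemLambdaBall} the sequence $(\Lambda u_n)$ lies in $\overline{B}_{\overline{\mathcal E}}(u^\sharp,\ep)$, hence is bounded in $W^{1,\infty}(\Omega)$ and equicontinuous on $\overline\Omega$; Arzel\`a--Ascoli then gives relative compactness of $(\Lambda u_n)$ in $L^\infty(\Omega)$, so it suffices to prove that the only possible $L^\infty$-limit point is $\Lambda u$. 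I would therefore fix a subsequence (not relabelled) with $\Lambda u_n\to w$ in $L^\infty(\Omega)$, where $w\in\overline{B}_{\overline{\mathcal E}}(u^\sharp,\ep)$, and show $w=\Lambda u$.

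The core step is to establish that the kinetic densities $g_{u_n}$ defined by \eqref{PropagationCaracteristiques} converge to $g_u$ almost everywhere on $\overline\Omega\times\R^2$. Since $\ep<\delta/T$, every element of the ball — in particular $u$ and all the $u_n$ — satisfies the lateral EGC in time $T$ with respect to $K:=\Supp\psi$, so Lemma~\ref{propo:tau:n} is available. I would set $N:=\{(x,v)\in\overline\Omega\times\R^2:\tau_{x,v}\neq-\infty\text{ and }(X_{x,v},V_{x,v})\in\Sigma^s\}$ and argue that $N$ is Lebesgue-negligible, because $\Sigma^s$ is a finite union of pieces of $\partial\Omega\times\R^2$ of dimension at most $2$ and the Lipschitz backward flow maps $\Sigma^s$ together with the time variable into a set of Hausdorff dimension at most $3$ in $\R^4$. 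Then, for $(x,v)\notin N$, I would distinguish three cases: if $\tau_{x,v}=-\infty$ then $g_u(x,v)=0$ and Lemma~\ref{propo:tau:n}(ii) gives $(x,v)\notin A_K^n$ for $n$ large, so $g_{u_n}(x,v)=0$; if $\tau_{x,v}\neq-\infty$ but $(X_{x,v},V_{x,v})\notin K$, then $g_u(x,v)=0$ and, by Lemma~\ref{propo:tau:n}(i) and compactness of $K$, $(X^n_{x,v},V^n_{x,v})\notin K$ eventually, so $g_{u_n}(x,v)=0$; finally if $\tau_{x,v}\neq-\infty$ and $(X_{x,v},V_{x,v})\in K\setminus\Sigma^s$, Lemma~\ref{propo:tau:n}(i) and continuity of $\psi$ give $e^{-2\tau^n_{x,v}}\psi(X^n_{x,v},V^n_{x,v})\to e^{-2\tau_{x,v}}\psi(X_{x,v},V_{x,v})$, which forces $g_{u_n}(x,v)\to g_u(x,v)$ both when $\psi(X_{x,v},V_{x,v})=0$ (the indicator being bounded by $1$) and when $\psi(X_{x,v},V_{x,v})\neq0$ (the entry point then lying in the relative interior of $K$, so transversality yields $\mathbf{1}_{A_K^n}(x,v)=1$ for $n$ large).

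Once the a.e.\ convergence holds, I would upgrade it by domination: the $g_{u_n}$ are all supported in the fixed compact set $\overline\Omega\times\overline B(0,M)$ (the velocity bound $M$ coming from a Gronwall estimate on $\dot V=Pu(t,X)-V$ with $|V|\le R$ at the entry time and $\sup|\tau_{x,v}|\le T$) and satisfy $\|g_{u_n}\|_\infty\le e^{2T}\|\psi\|_\infty$, so dominated convergence gives $g_{u_n}\to g_u$ in $L^p(\overline\Omega\times\R^2)$ for every $p<\infty$, and therefore $m_0g_{u_n}\to m_0g_u$ and $j_{g_{u_n}}\to j_{g_u}$ in $L^2(\Omega)$ (using the uniform $L^\infty$-bound on these moments). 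On the fluid side, since $\div u_n=0$ one has $(u_n\cdot\nabla)u_n=\div(u_n\otimes u_n)$ with $u_n\otimes u_n\to u\otimes u$ in $L^1(\Omega)$, so $(u_n\cdot\nabla)u_n\to(u\cdot\nabla)u$ in $\mathscr D'(\Omega)$; likewise $(m_0g_{u_n})u_n\to(m_0g_u)u$ in $L^1(\Omega)$. Hence the Stokes right-hand side $F_n:=-(u_n\cdot\nabla)u_n+j_{g_{u_n}}-(m_0g_{u_n})u_n$ converges to $F:=-(u\cdot\nabla)u+j_{g_u}-(m_0g_u)u$ in $\mathscr D'(\Omega)$. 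Passing to the limit in the weak formulation of \eqref{NSL1}--\eqref{NSLBC} for $\Lambda u_n$ tested against divergence-free fields (so the pressures drop out), and using that $\Lambda u_n-u^\sharp$ is bounded in $H^1_0(\Omega)$ hence converges weakly to $w-u^\sharp\in H^1_0(\Omega)$, one gets that $w$ is a divergence-free solution of the Stokes system with source $F$ and boundary value $u^\sharp$; by uniqueness for \eqref{NSL1}--\eqref{NSLBC} this means $w=\Lambda u$, which concludes.

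The main obstacle is the a.e.\ convergence $g_{u_n}\to g_u$: it forces one to combine the stability of the entry time and entry point under uniform perturbation of the field (Lemma~\ref{propo:tau:n}, which itself relies on the EGC being preserved throughout the ball via the choice of $\ep<\delta/T$) with a careful handling of the two families of degenerate phase-space points — those whose backward characteristic terminates on the singular set $\Sigma^s$, which I expect to control only by a dimension/measure-zero argument, and those landing on the topological boundary of $\Supp\psi$, which are harmless precisely because $\psi$ is continuous and vanishes there. Everything downstream (domination, moment convergence, passage to the limit in the linear Stokes problem) is routine.
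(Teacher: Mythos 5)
Your proposal is correct and takes essentially the same route as the paper's proof: compactness of $\overline{B}_{\overline{\mathcal E}}(u^{\sharp},\ep)$ reduces the problem to identifying the unique cluster point, the a.e.\ convergence $g_{u_n}\to g_u$ is obtained from Lemma~\ref{propo:tau:n} after discarding the negligible set of characteristics whose entry point lies in $\Sigma^s$, and one concludes by dominated convergence and uniqueness for the Stokes problem \eqref{NSL1}--\eqref{NSLBC}. The only minor differences are that the paper disposes of the singular set by a Sard-type argument (citing Proposition~2.3 of \cite{bardos}) where you use a Lipschitz-image/Hausdorff-dimension bound, and it handles the convective term via weak-$\star$ $W^{1,\infty}$ compactness and weak-strong convergence in $L^2$ rather than writing it as $\div(u_n\otimes u_n)$; both variants are valid.
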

The rest of this subsection is dedicated to the proof of these two lemmas.
\begin{proof}[Proof of Lemma~\ref{LemLambdaBall}]
Let $u \in \overline{B}_{\overline{\mathcal E}}(u^{\sharp},\ep)$. We start by estimating the $L^\infty$ norm of $g$ as well as its support in velocity.
To this end, we use the very definition of $g$ in \eqref{PropagationCaracteristiques}. Using Lemma~\ref{Lem:Gronwall}, we know that (that was the purpose of \eqref{ineq:epetit}) for any $(x,v) \in \Omega \times \R^2$, $|\tau_-(0,x,v)|<T$. This implies that 
$$\|g \|_{\infty} \leq \exp(2T)\|\psi\|_{\infty} \leq \ep,$$ 
where we have used \eqref{Condition} with the explicit expression \eqref{eq:c2} for the second inequality. For what concerns the support, formula~\eqref{PropagationCaracteristiques} also implies that $\Supp_v g \subset \{|v| \leq R + T\| u\|_\infty\}$. Since $\|u\|_\infty \leq \ep + \|u^\sharp\|_\infty$, using $\ep<1$ and the assumption $\|u^\sharp\|_\infty \leq C_1(\Omega)$, we eventually have 
$$\Supp_v g \subset B(0,M),$$ 
where $M:=R +T(1+C_1(\Omega))$. 
We use again the notation $\hat{u}=\Lambda u$.
Observe now that $\hat{u}- u^\sharp$ satisfies the equation
$$
-\Delta (\hat{u}- u^\sharp) + \nabla (\hat{p}- p^\sharp) =  -(u\cdot \nabla) u + (u^\sharp\cdot \nabla) u^\sharp +  \int_{\R^{2}} g(v-u) \, dv.
$$
We then have the estimate
\begin{align*}
\left\|-(u \cdot \nabla) u + (u^{\sharp} \cdot \nabla) u^{\sharp} \right\|_{\infty} &= \left\|((u^\sharp-u) \cdot \nabla ) u^\sharp + (u \cdot \nabla ) (u^\sharp-u) \right\|_{\infty}\\
&  \leq  \ep(\ep +2 \|u^{\sharp}\|_{W^{1,\infty}(\Omega)}), 
\end{align*}
as well as
\begin{align*}
\left\| \int_{\R^{2}} g(v-u) \, dv\right\|_\infty &\leq \exp(2T)\|\psi\|_{\infty}  \left( \pi M^3 + \pi  M^2\|u\|_\infty\right)\\
&\leq 2 \pi M^3 \exp(2T)\|\psi\|_\infty.
\end{align*}
Using the assumption \eqref{Condition} with the explicit expressions \eqref{eq:c1} and \eqref{eq:c2}, the previous inequalities lead to
\begin{align*}
\left\|-(u \cdot \nabla) u + (u^{\sharp} \cdot \nabla) u^{\sharp} + \int_{\R^{2}} g(v-u) \, dv\right\|_\infty \leq  \frac{1}{2} \frac{\ep}{C_{\textnormal{St},\Omega}}.
\end{align*}
Recalling the elliptic estimate \eqref{ineq:stokes} involving the constant $C_{\textnormal{St},\Omega}$, we infer 
\begin{align*}
\| \hat{u}-u^{\sharp}\|_{W^{1,\infty}(\Omega)} \leq \frac{\ep}{2}.
\end{align*}
On the other hand, since $u-u^\sharp$ vanishes on $x_2=\pm 1$, a trivial form of Hardy's inequality leads to
\begin{align*}
\left\| \frac{\hat{u}-u^{\sharp}}{\gamma} \right\|_{\infty} \leq  \|\nabla(u-u^\sharp)\|_\infty,
\end{align*}
so that eventually we have obtained $\Lambda u = \hat{u} \in \overline{B}_{\overline{\mathcal E}}(u^{\sharp},\ep)$.
\end{proof}
\begin{proof}[Proof of Lemma~\ref{LemLambdaCont}]
Consider $(u_n) \in \overline{B}_{\overline{\mathcal E}}(u^{\sharp},\ep)^\N$ converging to $u$ in $L^\infty(\Omega)$. Since $(\Lambda(u_n))$ takes its values in $\overline{B}_{\overline{\mathcal E}}(u^{\sharp},\ep)$ which is compact for the $L^\infty(\Omega)$ topology, it is sufficient to prove that $\Lambda(u)$ is the only possible cluster point (in $L^\infty(\Omega)$) for $(\Lambda(u_n))$ to prove the desired convergence. Since the Stokes equation with a right hand side in $L^2(\Omega)$ has a unique weak solution in $L^2(\Omega)$, it is sufficient to prove that the corresponding sequence of right hand sides of \eqref{NSL1} has a unique cluster point in the weak $L^2(\Omega)$ topology.
But $(u_n)$ converges strongly to $u$ in $L^\infty(\Omega)$, and since $\overline{B}_{\overline{\mathcal E}}(u^{\sharp},\ep)$ is compact for the $W^{1,\infty}(\Omega)$ weak-$\star$ topology, $(\nabla u_n)$ converges to $\nabla u$ weakly-$\star$ in $L^\infty(\Omega)$. By weak-strong convergence, it follows that $(u_n \cdot \nabla u_n)$ converges to $u \cdot \nabla u$ in $L^{\infty}(\Omega)$ weak-$\star$ and consequently weakly in $L^2(\Omega)$. \par
It now remains to treat the second term in the right hand side of \eqref{NSL1}. Since the sequence $(u_n)$ is bounded in $L^\infty(\Omega)$, one readily checks that $g_n$ is uniformly (with respect to $n$) compactly supported in velocity. Since $(g_n)$ is bounded in $L^\infty(\Omega\times\R^2)$ it is sufficient to prove the a.e. convergence of $(g_n)$ to $g$: the conclusion will then follow using the dominated convergence Theorem. This a.e. convergence is established using Lemma~\ref{propo:tau:n} in the following way. If $\tau_{x,v}=-\infty$, we have $(x,v)\notin A_K$ so that $g(x,v) = 0$ and point $(ii)$ of Lemma~\ref{propo:tau:n} implies that $(x,v)\notin A_K^n$ for $n$ large enough, so that $g_n(x,v)=0$ for $n$ large enough and $(g_n(x,v))_n$ indeed converges to $g(x,v)$. Else, if $\tau_{x,v} \neq -\infty$, we can restrict ourselves to the case $(X_{x,v},V_{x,v})\notin\Sigma^s$: indeed, an application of Sard's lemma as in Proposition~2.3 of \cite{bardos} allows to see that the set of all characteristic curves crossing $\Sigma^s$ at some point is Lebesgue negligible. Since $\tau_{x,v}\neq -\infty$ and  $(X_{x,v},V_{x,v})\notin\Sigma^s$, we have $\tau_{x,v}^n \neq -\infty$  for $n$ large enough (thanks to point $(i)$ of Lemma~\ref{propo:tau:n}). For such points, the formula \eqref{PropagationCaracteristiques} simply becomes 
\begin{equation*}
g(x,v)= \exp(-2\tau_{x,v}) \, \psi(X_{x,v},V_{x,v}),
\end{equation*}
were $\psi$ is extended by $0$ on $\Gamma^r \cup \Gamma^u \cup \Gamma^d$, and similarly
\begin{equation*}
g_n(x,v)= \exp(-2\tau_{x,v}^n) \, \psi(X_{x,v}^n,V_{x,v}^n),
\end{equation*}
so that the expected convergence follows from point $(i)$ of Lemma\,\ref{propo:tau:n}.
\end{proof}
The existence of $(\overline{u},\overline{f})$ follows then from Schauder's fixed point theorem. Now that the velocity field of such a fixed point actually belongs to $\mathscr{C}^{1}$ can be seen from \eqref{NSL1}-\eqref{NSL2} and elliptic regularity for the stationary Stokes equation \cite{frosties} (as already mentioned in Subsection \ref{subsec:const}). The estimate~\eqref{Petitesse} holds by construction.
This concludes the proof of the first statement of Theorem~\ref{Theo:Equilibria}.
\subsection{Lipschitz regularity}
\label{subsec:lipreg}
It remains to prove estimate \eqref{fbarlip}.
Recall the notation \eqref{DefAK} where $K$ is the support of $\psi$ and that since $K$ is compact, we have $K \subset \{v_1 \geq a\}$ for some $a>0$.
Thanks to Lemma~\ref{propo:tau}, we know that $g$ is continuous on $A_{K}$. Moreover one can see that $g$ is equal to $0$ on $\partial A_{K}$: indeed, let $(x,v) \in \partial A_K$. If $(x,v)$ belongs to $A_{K}$ then $(X_{x,v},V_{x,v}) \in \partial K$ so $g(x,v)=\psi(X_{x,v},V_{x,v})=0$ or else if $(x,v) \notin A_{K}$ then by definition $g(x,v)=0$. Since $g=0$ outside of $A_{K}$, we will only need to prove its Lipschitz regularity on the interior of $A_{K}$, a task that we shall fulfill by checking that $g$ is differentiable, with bounded derivatives.
The characteristics satisfy the following estimate 
\begin{equation*}
\|\nabla_{x,v} (X,V)\|_\infty \leq \exp(T\|\nabla_x \overline{u}\|_\infty +T).
\end{equation*}
On the other hand $\tau_{x,v}:=\tau_-(0,x,v)$ is defined locally by the equation $X_1(0,\tau_{x,v},x,v)=-L$.
Since $(x,v) \in A_K$ we also have $V_1(0,\tau_{x,v},x,v) \geq a>0$ so that by the implicit function theorem, $\tau_-$ is differentiable at $(x,v)$ and 
\begin{align*}
\| \nabla_{x,v} \tau\|_\infty \leq \frac{1}{a} \|\nabla_{x,v}(X,V)\|_\infty.
\end{align*} 
All in all we eventually get that $|\nabla_{x,v} g (x,v)| \leq C(T,\|\nabla_x u\|_\infty,\psi) \|\nabla_{x,v} \psi\|_\infty$, so that $g$ is indeed Lipschitz on $A_K$. \par
%
%
%
To conclude, we use point $(ii)$ of Lemma~\ref{propo:tau}: the set $A_K$ is at positive distance $\eta>0$ of the set ${B:=\{\tau_{x,v}=-\infty\}}$. Now we pick $(x,v)$ and $(z,w)$ in $\overline{\Omega} \times \R^2$, and discuss according to the following cases:
\begin{itemize}
\item[$\bullet$] {\bf Case 1.} When both $(x,v)$ and $(z,w)$ belong to $B$, then by definition of $g$ one has $g(x,v)=g(z,w)=0$.
\item[$\bullet$] {\bf Case 2.} When both belong to $A_K$, then as shown above, $|g(x,v)-g(z,w)|\leq L|(x,v)-(z,w)|$ where $L$ is the Lipschitz constant of $g$ on $A_K$.
\item[$\bullet$] {\bf Case 3.} When one of them belongs to $A_K$ and the other one belongs to $B$, then $|(x,v)-(z,w)|\geq \eta>0$, so that $|g(x,v)-g(z,w)| =  |g(x,v)| \leq \frac{\|g\|_\infty}{\eta}|(x,v)-(z,w)|$.
\end{itemize}

This  concludes the proof of the Lipschitz regularity of $\overline{f}$ and of the associated estimate~\eqref{fbarlip}
%
%
%
%
%
%
%

%
%
%
\section{Local stability}
\label{Sec:Stab}
We begin with the definition of local stability that is considered in this work.
\begin{defi}[Local stability with respect to a class of perturbations]
\label{defi:locsta}  
Let 
$$
\mathcal{F}  \subset \left\{ (u,f)  \in  L^2(\Omega)  \times L^\infty(\Omega\times\R^2), \, \div\, u= 0\right\}
$$
be a set of admissible perturbations.
A stationary solution $(\overline{u},\overline{f})$ of system \eqref{Kinetic}-\eqref{NavierStokes2} with boundary conditions \eqref{CBu1}-\eqref{CBf2} is called \emph{locally stable} with respect to perturbations in the class $\mathcal{F}$ if, for any fixed $R>0$ there exists $\ep_1,D,\lambda>0$ such that, for any $(u_0,f_0)\in L^2(\Omega)\times L^\infty(\Omega\times\R^2)$ such that $(u_0-\bar{u},f_0-\bar{f})\in \mathcal{F}$  and
$$
\|f_0-\overline{f}\|_\infty + \|u_0-\bar{u}\|_2 < \ep_1, 
$$
any Leray solution $(u,f)$ to  \eqref{Kinetic}-\eqref{NavierStokes2} with boundary conditions \eqref{CBu1}-\eqref{CBf2} with initial conditions $(u_0,f_0)$ satisfies 
$$
\forall t \in \R_+, \quad \| f(t) - \overline{f} \|_{L^2_{x,v}} +  \|u(t)-\overline{u}\|_2 \leq D e^{-\lambda t}.
$$
\end{defi}
The local stability that we prove in this Section concerns stationary states that are sufficiently close to a reference state $(u_p,0)$. Our precise statement is as follows.
%
%
%
%

\begin{thm} \label{thm-expo}
There exists a constant $K_\Omega >0$ such that the following hold. 
Consider two compact sets $K_{1} \subset \Gamma^l$ and $K_{2} \subset \Omega \times \R^{2}$.
Fix  $\psi \in \mathscr{C}^0_c(\Gamma^l)$ such that $\Supp(\psi) \subset K_{1}$. If $u_p$ is a Poiseuille flow satisfying, for some time $T>1$,
\begin{itemize}
\item the lateral EGC in time $T-1$ with respect to $K_{1}$;
\item the initial EGC  in time $T-1$  with respect  to $K_{2}$;
\item  $\|\nabla u_p\|_\infty \leq K_\Omega$;
\end{itemize} 
then, there exists a neighborhood $O$ of $(u_p,0)$ in $\mathcal{E}\times W^{1,\infty}(\Omega\times\R^2) $ such that any stationary solution 
of the Vlasov-Navier-Stokes system in the pipe belonging to $O$ is locally stable with respect to the class of perturbations 
$$
 \mathcal{F}:= \left\{ (u,f)  \in  L^2(\Omega)  \times L^\infty(\Omega\times\R^2), \, \div\, u= 0, \, \Supp f \subset K_{2} \right\}.
 $$
%
\end{thm}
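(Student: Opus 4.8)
Fix $u_p$, $\psi$, $K_1=\Supp\psi$ and $K_2$ as in the statement, and let $(\bar u,\bar f)$ be a stationary solution lying in the neighbourhood $O$, which we shrink as needed. Since $\bar f$ satisfies the stationary Vlasov equation with the Lipschitz field $\bar u$ and the boundary conditions \eqref{CBStat2}-\eqref{CBStat3}, and since shrinking $O$ makes $\|\gamma^{-1}(\bar u-u_p)\|_\infty$ small so that $\bar u$ does not vanish inside $\Omega$ (hence has no trapped characteristic), $\bar f$ is necessarily the function given by the characteristic formula \eqref{PropagationCaracteristiques} of Section~\ref{Sec:Equilibria}; in particular $\Supp\bar f$ is a fixed compact subset of $\overline\Omega\times\R^2$, equal to the forward $\bar u$-flow-out of $K_1$, and $\|\nabla_{x,v}\bar f\|_\infty\le C\|\psi\|_{W^{1,\infty}}$ by \eqref{fbarlip}. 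Moreover $\|\bar u-u_p\|_{\mathcal E}$, $\|\psi\|_{L^\infty}$ and $\|\bar f\|_{W^{1,\infty}}$ can be made as small as we wish, and (adjusting $K_\Omega$) $\|\nabla\bar u\|_\infty$ is small compared to the Poincar\'e constant of $\Omega$. Write $w:=u-\bar u$ (which has homogeneous Dirichlet trace) and $g:=f-\bar f$, so that $g$ solves $\partial_t g+v\cdot\nabla_x g+\div_v((u-v)g)=-w\cdot\nabla_v\bar f$ with $g=0$ on $\Gamma^l\cup\Gamma^r\cup\Gamma^u\cup\Gamma^d$ (using $f=\psi=\bar f$ on $\Gamma^l$). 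The whole argument is a continuation (bootstrap) estimate: let $T^\star$ be the supremum of the $\theta\ge0$ such that $\|w(t)\|_2\le\ep_0$, $\|g(t)\|_\infty\le\ep_0$ and $M_4 f(t)\le\ep_0$ on $[0,\theta]$, for a small $\ep_0$ to be fixed; we must prove $T^\star=+\infty$.

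On $[0,T^\star]$ the smallness of $\|\psi\|_\infty$, $\|f(c)\|_\infty$, $M_4 f(c)$ and $\|\widetilde u(c)\|_2$ at the shifted times $c=\max(t-T/2,0)$, together with the regularity estimate \eqref{ineq:ell} of Proposition~\ref{propo:nrj-def}, gives $\int_t^{t+T}\|u(s)-\bar u\|_\infty\,ds\le\delta$ with $\delta$ as small as we want. Applying Lemma~\ref{Lem:Gronwall}, Lemma~\ref{Lem:Gronwall'} and Lemma~\ref{Lem:Gronwall2} (first to pass from $u_p$ to $\bar u$, then from $\bar u$ to $u$, using Remark~\ref{remT} to absorb the loss in the time) we obtain that, on $[0,T^\star]$, the vector field $u$ satisfies the lateral EGC with respect to $K_1$, the internal lateral EGC with respect to $\Supp\bar f$, and the initial EGC with respect to $K_2$, all in a time that we may take to be a fixed $T'$ slightly larger than $T$.

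With these three geometric conditions at hand, I would write the Duhamel formula for $g$ along the characteristics $(X,V)(\cdot,t,x,v)$ of $u$: because the source $w\cdot\nabla_v\bar f$ is supported in $\Supp\bar f$, every backward characteristic meeting the support of the source has entered $\Omega$ transversally at most a time $T'$ earlier, the boundary term vanishes (since the boundary data of $f$ and $\bar f$ coincide, $g$ vanishes on $\partial\Omega$ in phase space), and the contribution of the initial datum, supported in $K_2$, is killed by the initial EGC once $t\ge T'$. Hence, for $t\ge T'$,
\begin{equation*}
g(t,x,v)=-\int_{t-T'}^{t} e^{2(t-s)}\,w\big(s,X(s,t,x,v)\big)\cdot\nabla_v\bar f\big((X,V)(s,t,x,v)\big)\,ds .
\end{equation*}
A Cauchy--Schwarz inequality in $s$, the change of variables $(x,v)\mapsto(X,V)(s,t,x,v)$ (whose Jacobian is $e^{2(t-s)}\le e^{2T'}$ since $\div_{x,v}(v,u-v)=-2$), and the uniform bound on $\Supp_v g(t)$ then yield, for all $t\ge 0$,
\begin{equation*}
\|g(t)\|_{L^2_{x,v}}^2\le C(\Omega,T',R)\,\|\nabla_v\bar f\|_\infty^2\int_{(t-T')_+}^{t}\|w(s)\|_{L^2(\Omega)}^2\,ds\ +\ C(\Omega,T',K_2)\,\|f_0-\bar f\|_\infty^2\,\mathbf 1_{t\le T'} .
\end{equation*}
On the fluid side, subtracting the weak Navier--Stokes equations for $u$ and $\bar u$, testing with $w$, and using $\int(\bar u\cdot\nabla)w\cdot w=\int(w\cdot\nabla)w\cdot w=0$, $-\int (m_0\bar f)|w|^2\le0$, the bound $\|j_g-(m_0 g)u\|_{L^2_x}\le C(R,\|u\|_\infty)\|g(t)\|_{L^2_{x,v}}$, Young's inequality and Poincar\'e's inequality (to absorb $\|\nabla w\|_2^2$), one gets, once $\|\nabla\bar u\|_\infty$ is small enough, a coefficient $\mu_0>0$ and a constant $C'$ with
\begin{equation*}
\frac{d}{dt}\|w(t)\|_2^2+\mu_0\|w(t)\|_2^2\le C'\,\|g(t)\|_{L^2_{x,v}}^2 .
\end{equation*}

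Combining the last two displays, $y(t):=\|w(t)\|_2^2$ obeys a delayed differential inequality $y'+\mu_0 y\le\kappa\int_{(t-T')_+}^{t}y+\beta\,\mathbf 1_{t\le T'}$ with $\kappa\le C\|\nabla_v\bar f\|_\infty^2$ and $\beta\le C\|f_0-\bar f\|_\infty^2$, both as small as we wish. A Gronwall-type comparison lemma for such inequalities (the one announced in the introduction) shows that, as soon as $\kappa T'<\mu_0$, one has $y(t)\le D^2 e^{-2\lambda t}$ for some $\lambda\in(0,\mu_0/2)$ and $D$ controlled by $\|u_0-\bar u\|_2+\|f_0-\bar f\|_\infty$. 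Plugging this back into the representation of $g$ gives $\|g(t)\|_{L^2_{x,v}}\le D'e^{-\lambda t}$, and, using once more \eqref{ineq:ell} to control $\int\|w\|_\infty$ over windows of length $T'$, also $\|g(t)\|_\infty$ and $M_4 f(t)$ stay as small as we wish; choosing $\ep_1$, $\ep_0$ and $O$ appropriately, the three bootstrap quantities are strictly below $\ep_0$ on $[0,T^\star]$, so $T^\star=+\infty$. (All these manipulations on the Leray solution are justified first on suitable regularised approximations and then passed to the limit.) This is exactly the claimed exponential stability, and Corollary~\ref{cor:uni} follows by applying it with $(u_0,f_0)$ another stationary state.

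The main obstacle is to run this continuation without losing any constant along the way: the transfer of the exit geometric conditions to $u$ needs the time-integrated closeness $\int_t^{t+T}\|u-\bar u\|_\infty\le\delta$, which is only available through the parabolic estimate \eqref{ineq:ell} evaluated at the shifted time $c=\max(t-T/2,0)$, so one must propagate the smallness of $\|f(c)\|_\infty$, $M_4 f(c)$ and $\|\widetilde u(c)\|_2$; these in turn are controlled only through the representation of $g$, which itself requires the geometric conditions. One therefore has to verify that the chain ``small data on $[0,c]$ $\Rightarrow$ $\int\|w\|_\infty$ small on the next window $\Rightarrow$ EGC on that window $\Rightarrow$ representation of $g$ $\Rightarrow$ energy inequality $\Rightarrow$ delayed Gronwall $\Rightarrow$ improved smallness'' is not circular, each step invoking the bootstrap hypothesis only at strictly earlier times. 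The second delicate point is the delayed Gronwall lemma and its threshold $\kappa T'<\mu_0$: quantitatively this asks that $\|\nabla_v\bar f\|_\infty$, hence $\|\psi\|_{W^{1,\infty}}$, be small relative to the fluid dissipation rate $\mu_0$, and it is precisely to keep $\mu_0$ bounded below that the hypothesis $\|\nabla u_p\|_\infty\le K_\Omega$ is imposed.
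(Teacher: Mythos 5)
Your sketch is correct and follows essentially the same route as the paper's proof: transfer of the lateral, internal lateral and initial EGC from $u_p$ to $\overline{u}$ and then to the Leray velocity via Lemmas~\ref{Lem:Gronwall}, \ref{Lem:Gronwall'} and \ref{Lem:Gronwall2}, the Duhamel representation of $f-\overline{f}$ with the memory window reduced to length $T$, the Poincar\'e-based fluid estimate under $\|\nabla u_p\|_\infty\leq K_\Omega$, the delayed Gronwall lemma under the smallness threshold on $\|\nabla_v\overline{f}\|_\infty$, and a continuation argument fed by the parabolic estimate \eqref{ineq:ell} evaluated at shifted times. The only step to adjust is the bound $\|j_g-(m_0 g)u\|_{L^2_x}\leq C(R,\|u\|_\infty)\|g(t)\|_{L^2_{x,v}}$, since $\|u(t)\|_{L^\infty(\Omega)}$ is not pointwise-in-time controlled for a Leray solution; the paper instead writes the coupling as $j_{\hat f}-(m_0 f)\hat u-(m_0\hat f)\overline{u}$, discards the nonpositive contribution of $-(m_0 f)\hat u$ tested against $\hat u$, and bounds only the $\overline{u}$-weighted and $j_{\hat f}$ terms using the compact velocity support, which is the decomposition you should use (or, equivalently, split $u=w+\overline{u}$ and absorb the $(m_0 g)w$ term using the smallness of $\|g\|_\infty$).
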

\begin{rem}
The previous statement is of interest only if the assumptions can be matched and if there is a neighborhood $O$ that contains stationary solutions to our system. This is indeed the case, at least under appropriate conditions on $K_1$ and $K_2$, thanks to Lemma~\ref{poigeo} and Theorem~\ref{Theo:Equilibria}. In particular this requires $\psi$ to be Lipschitz with sufficiently small $W^{1,\infty}$ norm.
\end{rem}
\begin{rem}
It will be clear from the proof of this result that the exponential decay of the ${L^2_{x,v}}$ norm given in Definition \ref{defi:locsta} is also true for $L^p_{x,v}$ norms, for any finite value of $p$.
\end{rem}
\begin{rem}
\label{BC2}
With the other choice of boundary condition~\eqref{Poiautre} described in Remark~\ref{BC0}, because of Remark~\ref{BC1}, an analogue of Theorem~\ref{thm-expo} holds, with the following changes. No geometric assumption is needed on $K_1, K_2$ and $u_p$ and the local stability holds with respect to the larger class of perturbations 
$$
 \mathcal{F'}:= \left\{ (u,f)  \in  L^2(\Omega)  \times L^\infty(\Omega\times\R^2), \, \div\, u= 0, \, f \text{  is compactly supported}\right\}.
 $$
\end{rem}
As we will see, Theorem~\ref{thm-expo} is based on a kind of continuous induction argument, in which the solution is estimated by means of a delayed differential equation. This estimate itself relies on the (different) EGC satisfied by $u_p$: denoting $(\overline{u}, \overline{f})$ a stationary state in $O$, they are also satisfied by $\overline{u}$ and, in some sense, by the fluid velocity fields of Leray solutions whenever they are sufficiently close to $u_p$. %
\subsection{Local uniqueness}
Before starting the proof of the theorem, let us study one important consequence, that is a local uniqueness property of the stationary solution $(\overline{u},\overline{f})$ constructed in Theorem~\ref{Theo:Equilibria}.
Within the assumptions of Theorem~\ref{thm-expo}, it follows (see the proof of Lemma~\ref{suppfbar} below) that $u_p$ satisfies the EGC in some finite time  with respect to the support of $\overline{f}$, on $\R_+$. Also, consider a stationary solution  $(\mathfrak{u}, \mathfrak{f})$ close to $(\overline{u},\overline{f})$ for the topology of  $\mathcal{E}\times W^{1,\infty}(\Omega\times\R^2)$; likewise, it follows that $u_p$ satisfies the EGC in some finite time  with respect to the support of $\mathfrak{f}$, on $\R_+$.

We may therefore infer from Theorem \ref{thm-expo} the following uniqueness result.  
\begin{cor}\label{cor:uni}
The stationary solutions $(\overline{u},\overline{f})$ constructed in Theorem \ref{Theo:Equilibria} are locally unique in the following sense : there exists a $\mathcal{E}\times W^{1,\infty}(\Omega\times\R^2)$ neighboorhood of $(\overline{u},\overline{f})$ in which the latter is the only stationary solution. 
\end{cor}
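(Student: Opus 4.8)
The plan is to derive Corollary~\ref{cor:uni} as an essentially immediate consequence of the exponential stability established in Theorem~\ref{thm-expo}, using the fact that a stationary solution is its own time evolution. First I would fix the stationary solution $(\overline{u},\overline{f})$ produced by Theorem~\ref{Theo:Equilibria}, with reference Poiseuille flow $u_p$ and incoming datum $\psi$; as recalled in the paragraph preceding the corollary, within the hypotheses of Theorem~\ref{thm-expo} the flow $u_p$ satisfies the lateral EGC with respect to $\Supp \psi$ and the initial EGC with respect to $\Supp \overline{f}$ (this is the content of Lemma~\ref{suppfbar} invoked there), so Theorem~\ref{thm-expo} applies and yields a neighbourhood $O$ of $(u_p,0)$ in $\mathcal{E}\times W^{1,\infty}(\Omega\times\R^2)$ and constants $\ep_1,D,\lambda>0$ governing the stability of $(\overline{u},\overline{f})$ with respect to perturbations in the class $\mathcal F$ with support in $K_2:=\Supp\overline f$ (enlarged slightly so that it is a genuine neighbourhood-robust choice).

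Next I would argue by contradiction: suppose no $\mathcal{E}\times W^{1,\infty}$ neighbourhood of $(\overline{u},\overline{f})$ consists solely of $(\overline{u},\overline{f})$ as a stationary solution. Then there is another stationary solution $(\mathfrak u,\mathfrak f)$ of the Vlasov--Navier--Stokes system in the pipe, with the same boundary data $(u^\sharp,\psi)$, arbitrarily close to $(\overline u,\overline f)$ in $\mathcal{E}\times W^{1,\infty}(\Omega\times\R^2)$. By taking it close enough we may ensure: (a) $(\mathfrak u,\mathfrak f)$ still lies in the neighbourhood $O$ of $(u_p,0)$; (b) $\|\mathfrak f-\overline f\|_\infty+\|\mathfrak u-\overline u\|_2<\ep_1$; and (c) $\Supp\mathfrak f\subset K_2$. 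Point (c) uses the robustness of the initial EGC under small perturbations (Lemma~\ref{Lem:Gronwall2} and the argument in Lemma~\ref{suppfbar}): since $u_p$ satisfies the initial EGC in time $T-1$ with respect to a slightly larger compact set, and $\mathfrak u$ is $\mathcal E$-close to $u_p$, the characteristics of $\mathfrak u$ issuing from $\Supp\psi$ exit before time $T$, forcing $\mathfrak f$ to be supported in the compact region swept out by those characteristics, which we may arrange to be inside $K_2$ by choosing $K_2$ appropriately at the outset.

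Now view $(\mathfrak u,\mathfrak f)$ as the (stationary) Leray solution of the Cauchy problem with initial data $(u_0,f_0)=(\mathfrak u,\mathfrak f)$ — it is a Leray solution in the sense of Definition~\ref{def:leray} since it is smooth, divergence-free, satisfies the equations and the energy bound trivially. The perturbation $(u_0-\overline u,f_0-\overline f)=(\mathfrak u-\overline u,\mathfrak f-\overline f)$ belongs to $\mathcal F$ by (a) and (c) and has size $<\ep_1$ by (b). Theorem~\ref{thm-expo} then gives
\begin{equation*}
\forall t\in\R_+,\qquad \|\mathfrak f-\overline f\|_{L^2_{x,v}}+\|\mathfrak u-\overline u\|_2=\|f(t)-\overline f\|_{L^2_{x,v}}+\|u(t)-\overline u\|_2\le D e^{-\lambda t},
\end{equation*}
where on the left we used that the solution is constant in time. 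Letting $t\to+\infty$ forces $\|\mathfrak f-\overline f\|_{L^2_{x,v}}=0$ and $\|\mathfrak u-\overline u\|_2=0$, hence $\mathfrak u=\overline u$ a.e. and $\mathfrak f=\overline f$ a.e., contradicting the assumption that $(\mathfrak u,\mathfrak f)\neq(\overline u,\overline f)$. This proves the corollary.

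The main obstacle — really the only non-bookkeeping point — is verifying (c), i.e.\ that the competing stationary solution automatically has support inside the prescribed compact set $K_2$ of the perturbation class, so that Theorem~\ref{thm-expo} is actually applicable to it. This is where one must invoke the stability of the initial EGC under $\mathcal E$-small perturbations (Remark~\ref{remT} and Lemma~\ref{Lem:Gronwall2}) together with the characterization of $\Supp\overline f$ in terms of characteristics emanating from $\Supp\psi$, exactly as in the proof of Lemma~\ref{suppfbar} referenced before the corollary; once $K_2$ is fixed with a little room to spare, all sufficiently $\mathcal E$-close stationary solutions have support well inside it, and the rest is the soft contradiction argument above.
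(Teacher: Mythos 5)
Your proposal is correct and follows essentially the same route as the paper: the authors also deduce the corollary directly from Theorem~\ref{thm-expo} by observing (via the argument of Lemma~\ref{suppfbar} and the robustness lemmas for the EGC) that any stationary solution $(\mathfrak u,\mathfrak f)$ close to $(\overline u,\overline f)$ in $\mathcal{E}\times W^{1,\infty}$ has compactly supported $\mathfrak f$ with the required exit geometric condition, so that it can be viewed as a time-independent Leray solution whose exponential convergence to $(\overline u,\overline f)$ forces equality. Your treatment of the only delicate point, namely fixing $K_2$ with some room so that the supports of nearby stationary distributions fall inside it, is exactly the content the paper invokes in the paragraph preceding the corollary.
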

The rest of the section is devoted to the proof of Theorem~\ref{thm-expo} and is divided into two parts. 

We fix once for all $K_1$, $K_2$, $\psi$, $u_p$ and $T$ satisfying the assumptions of Theorem~\ref{thm-expo}. We fix also $\delta$ to be the smallest of the three parameters introduced in Lemma~\ref{Lem:Gronwall}, Lemma~\ref{Lem:Gronwall'} and Lemma~\ref{Lem:Gronwall2} (respectively for $K_1$, $\textnormal{Supp}\,\overline{f}$ and $K_2$).
%
%
%
%
%
\subsection{Consequences of the exit geometric condition}
In this subsection, we consider a stationary state $(\overline{u}, \overline{f})$ belonging to some neighborhood $O$ of $(u_{p},0)$, and an initial condition $f_0$ such that $\Supp(f_0-\bar{f}) \subseteq K_2$.
Let us start by establishing a first important EGC automatically satisfied by $\overline{u}$ and $u_p$.
\begin{lem}
\label{suppfbar}
If the neighboord $O$ is small enough, both vector fields $\overline{u}$ and $u_p$ satisfy the internal lateral EGC in time $T$ with respect to the compact set $\Supp \overline{f}$.
\end{lem}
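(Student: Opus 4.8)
\textbf{Proof strategy for Lemma~\ref{suppfbar}.}

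The plan is to deduce this directly from the stability Lemmas of Section~\ref{Sec:EGC}, together with a size control on $\Supp\overline{f}$. First I would recall that $u_p$ satisfies, by hypothesis of Theorem~\ref{thm-expo}, the lateral EGC in time $T-1$ with respect to $K_1 \supseteq \Supp\psi$ on $\R$. The point is to upgrade this to the \emph{internal} lateral EGC with respect to the (a priori larger) compact set $\Supp\overline{f}$. The key observation is that $\overline{f}$ is built in Theorem~\ref{Theo:Equilibria} by propagating $\psi$ along the characteristics of $\overline{u}$, so $\Supp\overline{f}$ consists precisely of phase-space points whose backward characteristic (for $\overline{u}$) hits $K=\Supp\psi$; hence along such a characteristic, the forward exit is governed by the exit of a characteristic issued from $K$. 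Thus the internal lateral EGC for $\overline{u}$ with respect to $\Supp\overline{f}$ is essentially equivalent to the lateral EGC for $\overline{u}$ with respect to $K$ — the forward portion of a trajectory through a point of $A_K$ is a sub-arc of a trajectory issued from $K$.

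Concretely, the steps would be: (1) Since $u_p$ satisfies the lateral EGC in time $T-1$ with respect to $K_1$ on $\R$, apply Lemma~\ref{Lem:Gronwall} (with $u^\sharp=u_p$, $J=\R$): there is $\delta>0$ such that any $u\in L^\infty(\R_+;\Lip(\Omega))$ staying within $\delta/T$ (in the relevant time-averaged $L^\infty$ sense, hence in particular any time-independent $u$ with $\|u-u_p\|_\infty$ small) satisfies the lateral EGC in time $T$ with respect to $K_1$. Since $O$ is a neighborhood of $(u_p,0)$ in $\mathcal{E}\times W^{1,\infty}$, shrinking $O$ we may assume $\|\overline{u}-u_p\|_\infty$ (and its Lipschitz norm, as needed by Lemma~\ref{Lem:Gronwall}) is small enough that $\overline{u}$ satisfies the lateral EGC in time $T$ with respect to $K_1$. (2) Argue that this implies the internal lateral EGC in time $T$ with respect to $\Supp\overline{f}$: by the characterization~\eqref{PropagationCaracteristiques} of $\overline{f}$, every $(x,v)\in\Supp\overline{f}$ lies in $\overline{A_K}$ for $\overline{u}$, i.e. its backward $\overline{u}$-characteristic enters through $K\subseteq K_1$; running forward from $(x,v)$ until exit is the same as running forward from the entry point $(X^-_{x,v},V^-_{x,v})\in K_1$ past $(x,v)$, and by the lateral EGC this exits transversally in total time $<T$, hence a fortiori $\tau_+(0,x,v)-0 < T$ and the exit point lies in $\Sigma^+$; one must also take a closure/compactness argument to pass from $A_K$ to the compact set $\Supp\overline{f}$, using that $\Supp\overline{f}\subset \overline{\Omega}\times B(0,M)$ is bounded in velocity (from the support estimate in the proof of Lemma~\ref{LemLambdaBall}) together with point (iii) of Lemma~\ref{propo:tau} which keeps $A_K$ away from trapped trajectories. (3) The same argument applied to $u_p$ itself (which trivially satisfies the lateral EGC in time $T-1\le T$ with respect to $K_1$, and whose own backward characteristics from $\Supp\overline{f}$ still enter through a compact subset of $\Gamma^l$ when $O$ is small) gives the internal lateral EGC in time $T$ for $u_p$ with respect to $\Supp\overline{f}$.

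The main obstacle I anticipate is step (2): carefully justifying that $\Supp\overline{f}$ is a compact set for which the internal lateral EGC (a supremum condition over a \emph{closed} set, including boundary points where characteristics could graze $\Sigma^s$) genuinely holds, rather than just an open-set statement over $A_K$. This requires invoking the Sard-type argument and the continuity/transversality results of Lemma~\ref{propo:tau} (and the positive-distance statement (iii)) to ensure no sequence of points in $\Supp\overline{f}$ has exit times accumulating at $+\infty$ or exit points drifting into $\Sigma^s$. Once the relevant compactness and uniform transversality are in place, the conclusion follows, and by absorbing the loss from Lemma~\ref{Lem:Gronwall} into the gap between $T-1$ and $T$ (cf.\ Remark~\ref{remT}) one obtains the clean statement in time $T$.
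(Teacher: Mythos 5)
Your treatment of $\overline{u}$ is essentially the paper's argument: use Lemma~\ref{Lem:Gronwall} (with Remark~\ref{remT}) to transfer the lateral EGC with respect to $K_1$ from $u_p$ to $\overline{u}$ after shrinking $O$, then observe that every point of $\Supp\overline{f}$ sits on a forward $\overline{u}$-characteristic issued from $K_1$ (since all backward characteristics exit in finite time and $\overline{f}$ propagates $\psi$), so the forward exit from such a point is a sub-arc of an exit trajectory issued from $K_1$, giving the internal lateral EGC for $\overline{u}$. The genuine gap is your step (3), the claim for $u_p$. The set $\Supp\overline{f}$ is built from the characteristics of $\overline{u}$, not of $u_p$: the backward $u_p$-characteristic from a point of $\Supp\overline{f}$ has no reason to enter through $K_1$, and the hypothesis of Theorem~\ref{thm-expo} gives the lateral EGC for $u_p$ \emph{only} with respect to $K_1$. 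Even granting your parenthetical that these backward characteristics enter through some compact subset $K'$ of $\Gamma^l$ (itself unproved — entry could a priori occur elsewhere or tangentially), nothing is assumed about exit of $u_p$-trajectories issued from $K'$. This is not a technicality: by Lemma~\ref{poigeo}, for the Poiseuille flow an entry point with $|x_2+v_2|=1$ generates a trajectory that never exits transversally, so ``enters through a compact subset of $\Gamma^l$'' is not enough, and "the same argument applied to $u_p$" does not go through.

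The paper closes this gap differently: it keeps a margin in time, deducing first that $\overline{u}$ satisfies the lateral EGC with respect to $K_1$ in time $T-1/2$ (not $T$), hence the \emph{internal} lateral EGC in time $T-1/2$ with respect to $\Supp\overline{f}$ by the sub-arc argument; then it applies the stability lemma for the internal EGC, Lemma~\ref{Lem:Gronwall'} (again with Remark~\ref{remT}), now in the reverse direction, taking $u^\sharp=\overline{u}$ and $u=u_p$ — both stationary, and $T\|u_p-\overline{u}\|_\infty$ made smaller than the corresponding $\delta$ by shrinking $O$ — to conclude that $u_p$ satisfies the internal lateral EGC in time $T$ with respect to $\Supp\overline{f}$. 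Note also that this is why one cannot spend the whole gap between $T-1$ and $T$ in your step (1): the half-unit of slack is exactly what allows the second perturbation step to land at time $T$. Your compactness/grazing worries in step (2) are legitimate and are handled in the spirit you describe (compact velocity support of $\overline{f}$, continuity and non-grazing from Lemma~\ref{propo:tau}), so with step (3) replaced by the Lemma~\ref{Lem:Gronwall'} transfer the proof is complete.
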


\begin{proof}[Proof of Lemma~\ref{suppfbar}]
This follows from the following facts. 
\par
Observe that all backwards characteristics associated to $u_p$ starting from any $(x,v) \in \Omega \times \R^2$ reach in finite time the incoming boundary $\Sigma^-$.
Since $u$ is close to $u_p$ in the $\mathcal{E}$ norm, this property also holds for the characteristics associated to $u$. 
Then, since $( \overline{u}, \overline{f})$ is a (stationary) solution to the Vlasov-Navier-Stokes system, it means that any element $(x,v) \in \Supp \overline{f}$ has to be issued from $K_1$. A first consequence is that $\Supp \overline{f}$ is a compact subset of $\overline \Omega \times \R^2$ (because $\overline{u}\in L^\infty(\Omega)$).
Next, we know that  by assumption, $u_p$ satisfies the lateral EGC  in time $T-1>0$ with respect to  $K_{1}$. By Lemma~\ref{Lem:Gronwall} and Remark~\ref{remT}, up to reducing the neighborhood $O$ appropriately, $\overline{u}$ satisfies the lateral EGC  in time $T-1/2$ with respect to $K_{1}$. It implies in particular that $\overline{u}$ satisfies the internal lateral EGC in time $T-1/2$ (and thus in time $T$) with respect to $\Supp \overline{f}$. Applying Lemma~\ref{Lem:Gronwall'}, up to reducing $O$, we infer that ${u}_p$ satisfies the internal lateral EGC in time $T$ with respect to $\Supp \overline{f}$.
\end{proof}
We fix $R>0$  such that $\Supp_v \overline f  + \Supp_v f_0  \subset B(0,R).$ 
Note that since in the sequel we will consider Leray solutions to the coupled system, we may manipulate vector fields $u$ with less than Lipschitz regularity. However Lemma~\ref{Lem:Gronwall} will be used on approximating vector fields $u_n$ which are smooth. 
%
\ \par 
Next, we obtain a differential inequality allowing to measure the distance to $u_p$ of a Leray solution. This is a key ingredient in the proof, for which we crucially use the different EGC. It is in this step that we use the assumption $\|\nabla u_p\|_\infty \leq K_\Omega$. More precisely, if $C_{\text{Po},\Omega}$ is the best Poincar\'e constant associated to the domain $\Omega$ we introduce
\begin{equation} \label{DefKOmega}
K_{\Omega} := 1/(2 C_{\text{Po},\Omega}^2).
\end{equation}

Now consider a Leray solution $(u,f)$ with initial condition $(u_0,f_0)$. We have the following
\begin{lem} \label{lem:nrj:delay}
Suppose $u \in L^{2}_{\rm loc}(\R_{+}^*; H^2(\Omega))\cap\mathscr{C}^0(\R_+^*;H^1(\Omega)) \cap \mathscr{C}^1(\R_+^*;L^2(\Omega))$ satisfies, for some $t^{\star} >T$ that
\begin{equation} \label{Hyp:nrj:delay}
\forall t \in [T,t^\star], \quad \|u - u_p \|_{L^1(t- T,t; L^\infty(\Omega))} < \delta.
\end{equation}
Introducing
\begin{equation} \label{Eq:DefHatR}
\hat{R}:=R + T \| u_p \|_{\infty} + 2 \delta,
\end{equation}
we have then the following estimates on $[T,t^\star]$, where we define
\begin{equation} \label{DefChapeaux}
\hat{f}:=f-\overline{f} \ \text{ and } \ \hat{u}:=u-\overline{u}.
\end{equation}
\begin{enumerate}
\item For $t\in [T,t^\star]$, we have $\Supp_v \hat{f}(t)\subset B(0,\hat{R})$ and furthermore for  $p \in [1,\infty]$,
\begin{equation} \label{normelp:delay}
\| \hat{f}(t) \|_{L^p_{x,v}} \leq (\pi R^2)^{1/p}  e^{2T}  \|\nabla_v \overline{f} \|_\infty \int_{t-T}^t  \| \hat{u}(s) \|_{L^p_x} \, ds.
\end{equation}
\item If $\|\nabla u_p \|_\infty \leq K_\Omega$ where $K_\Omega$ is defined by \eqref{DefKOmega} then for $t\in [T,t^\star]$, we have
$$
\frac{d}{dt} \| \hat{u}(t) \|_{L^2_x} + K_\Omega \| \hat{u}(t) \|_{L^2_x} \leq  \alpha \int_{t-T}^t \| \hat{u}(s) \|_{L^2_x} \, ds,
$$
where 
\begin{equation} \label{Eq:DefAlpha}
\alpha :=  R \pi e^{2T} \|\nabla_v \overline{f}\|_\infty \hat{R}(\|\overline{u}\|_\infty +\hat{R}).
\end{equation}
\end{enumerate}
\end{lem}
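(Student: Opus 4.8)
# Proof Plan for Lemma~\ref{lem:nrj:delay}

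The plan is to first establish the representation formula for $\hat f$ along characteristics, then derive the pointwise/$L^p$ bounds on $\hat f(t)$, and finally close an energy estimate for $\hat u$ in $L^2_x$ where the kinetic source is controlled by item (1).

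\textbf{Step 1: Characteristic representation and support of $\hat f$.} Since $\overline f$ is a stationary solution of the Vlasov equation with field $\overline u$ and $f$ is a Leray solution with field $u$, both are transported along their respective characteristics with the factor $e^{2t}$ (as in \eqref{PropagationCaracteristiques}). Under hypothesis \eqref{Hyp:nrj:delay}, Lemma~\ref{Lem:Gronwall} guarantees that for each $t\in[T,t^\star]$ the field $u$ satisfies the internal lateral EGC in time $T$ with respect to $K_2$ (this is where $\delta$ and the initial EGC with respect to $K_2$ enter), so that every backward characteristic of $u$ issued from a point of $\Supp_v\hat f(t)$ reaches $\Sigma^-$ within time $T$. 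Tracking the velocity component along \eqref{Characteristics}, one gets $|V(s,t,x,v)|\leq |v| + \|Pu\|_\infty|t-s|$, and combining with the crude bound $\|u\|_\infty \leq \|u_p\|_\infty + 2\delta/T \cdot T$-type control coming from \eqref{Hyp:nrj:delay} one obtains $\Supp_v\hat f(t)\subset B(0,\hat R)$ with $\hat R$ as in \eqref{Eq:DefHatR}. For \eqref{normelp:delay}: write $\hat f(t,x,v) = f(t,x,v) - \overline f(x,v)$; since $\overline f$ is Lipschitz with $\|\nabla_v\overline f\|_\infty$ finite and $f,\overline f$ share boundary data $\psi$ on $\Gamma^l$, subtract the two characteristic formulas. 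The difference comes only from the discrepancy between the two flows; using that the exit point in the $v$-variable differs by at most $C\|\hat u\|_{L^1(t-T,t;L^\infty)}$-type quantities is the wrong track here — instead, the cleaner route is: $\hat f$ itself solves a transport equation $\partial_t\hat f + v\cdot\nabla_x\hat f + \div_v((u-v)\hat f) = -\div_v(\hat u\,\overline f) = -\hat u\cdot\nabla_v\overline f$, with zero incoming data on $\Gamma^l$ and zero initial data outside $K_2$. Integrating this along the characteristics of $u$ (Duhamel), using that the source is supported in $\Supp_{x,v}\overline f$ which has $v$-measure $\leq \pi R^2$, and that trajectories live at most time $T$ before exiting (giving the $e^{2T}$ factor from the Jacobian), yields exactly \eqref{normelp:delay} after taking $L^p_{x,v}$ norms and Minkowski's integral inequality.

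\textbf{Step 2: Energy estimate for $\hat u$.} Here $\hat u = u - \overline u$ solves, in the Leray sense,
\begin{equation*}
\partial_t \hat u + (u\cdot\nabla)\hat u + (\hat u\cdot\nabla)\overline u - \Delta\hat u + \nabla\hat p = j_f - (m_0 f)u - j_{\overline f} + (m_0\overline f)\overline u,
\end{equation*}
with $\hat u \in H^1_0(\Omega)$ for a.e.\ $t$ (both $u$ and $\overline u$ take the same boundary value $u^\sharp$). Multiply by $\hat u$ and integrate: the term $(u\cdot\nabla)\hat u\cdot\hat u$ vanishes by $\div u = 0$, $\|\nabla\hat u\|_2^2$ is produced by the Laplacian, and $|((\hat u\cdot\nabla)\overline u,\hat u)| \leq \|\nabla\overline u\|_\infty\|\hat u\|_2^2$. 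Rewrite the right-hand side forcing as $\int f(v-u) - \int\overline f(v-\overline u) = \int\hat f(v-u) - \int\overline f\,\hat u$, so that pairing with $\hat u$ gives a term $-\int(m_0\overline f)|\hat u|^2 \leq 0$ (good sign, drop it) plus $\int\hat f(v-u)\cdot\hat u$, bounded by $\|\hat f(t)\|_{L^1_{x,v}}\cdot(\hat R + \|u\|_\infty)\|\hat u\|_\infty$ — but to keep an $L^2$-based inequality I instead bound it by $(\hat R + \|\overline u\|_\infty)\,\|m_0\hat f(t)\|_{L^2_x}\,\|\hat u(t)\|_{L^2_x}$ after using $\Supp_v\hat f(t)\subset B(0,\hat R)$ and $\|m_0\hat f\|_{L^2_x} \leq \pi\hat R^2\|\hat f\|_{L^\infty_v L^2_x}$, then inserting \eqref{normelp:delay} with $p=2$. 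This produces the factor $\alpha$ of \eqref{Eq:DefAlpha}. Finally, use Poincaré $\|\hat u\|_2 \leq C_{\text{Po},\Omega}\|\nabla\hat u\|_2$ to write $\|\nabla\hat u\|_2^2 \geq \tfrac12\|\nabla\hat u\|_2^2 + \tfrac{1}{2C_{\text{Po},\Omega}^2}\|\hat u\|_2^2$ and absorb $\|\nabla\overline u\|_\infty\|\hat u\|_2^2$ (here $\|\nabla\overline u\|_\infty$ is close to $\|\nabla u_p\|_\infty\leq K_\Omega$, and shrinking $O$ if needed) into the remaining $\tfrac12\|\nabla\hat u\|_2^2$ via Poincaré once more — leaving $\tfrac{d}{dt}\tfrac12\|\hat u\|_2^2 + K_\Omega\|\hat u\|_2^2 \leq \alpha\|\hat u\|_2\int_{t-T}^t\|\hat u(s)\|_2\,ds$, and dividing by $\|\hat u\|_2$ (or arguing on $\sqrt{\|\hat u\|_2^2+\varepsilon}$ and letting $\varepsilon\to0$) gives the stated inequality.

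\textbf{Main obstacle.} The delicate point is the passage from Leray (merely weak) solutions to the characteristic-based representation of Step 1: the vector field $u$ is only in $L^2_{\rm loc}(H^2)\cap\mathscr C^0(H^1)$, not Lipschitz in $x$ uniformly, so the flow need not be well-defined classically. This is handled by working with the smooth approximations $u_n$ built in the existence proof (Theorem~\ref{thm:weak}), for which Lemma~\ref{Lem:Gronwall} and the EGC machinery apply verbatim, deriving \eqref{normelp:delay} and the energy inequality at the approximate level, and then passing to the limit using the strong $L^p$ convergence of $f_n$ and the $L^2_{\rm loc}$ convergence of $u_n$; the inequalities being closed and lower-semicontinuous under these convergences, they survive. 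One must also check carefully that the time-averaged smallness \eqref{Hyp:nrj:delay}, which is an open condition, is inherited by $u_n$ for $n$ large, which is where the $L^1$-in-time (rather than $L^\infty$-in-time) formulation pays off.
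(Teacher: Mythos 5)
Your overall architecture is the same as the paper's (source equation $\partial_t\hat f+v\cdot\nabla_x\hat f+\div_v((u-v)\hat f)=-\hat u\cdot\nabla_v\overline f$, Duhamel along characteristics, the EGC propagation lemmas to kill the initial-datum term and truncate the source integral to $[t-T,t]$, then an $L^2$ energy estimate for $\hat u$ fed by item~1 with $p=2$), but two steps do not go through as written. In Step~2 your grouping of the forcing as $\int\hat f(v-u)\,dv-(m_0\overline f)\hat u$ leaves you with the term $\int_\Omega\int_{\R^2}\hat f\,(v-u)\cdot\hat u\,dv\,dx$, which is \emph{not} bounded by $(\hat R+\|\overline u\|_\infty)\|m_0\hat f\|_{L^2_x}\|\hat u\|_{L^2_x}$: on $\Supp_v\hat f$ one only has $|v-u|\leq\hat R+|u|$, and replacing $u$ by $\overline u$ produces the signless cross term $\int_\Omega m_0\hat f\,|\hat u|^2\,dx$ (recall $\hat f$ changes sign), which is not controlled by $\|m_0\hat f\|_2\|\hat u\|_2$ and would force you to degrade the constants $K_\Omega$ and $\alpha$. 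The correct grouping is $j_{\hat f}-(m_0 f)\hat u-(m_0\hat f)\overline u$: the quadratic term dropped by sign is then $-(m_0 f)|\hat u|^2\leq 0$ with the full nonnegative $f$, and the surviving terms involve only $\overline u$ and $j_{\hat f}$, giving exactly \eqref{Eq:DefAlpha} via $\|m_0\hat f\|_2\leq\pi^{1/2}\hat R\|\hat f\|_{L^2_{x,v}}$ and $\|j_{\hat f}\|_2\leq\pi^{1/2}\hat R^2\|\hat f\|_{L^2_{x,v}}$ (note also that your bound $\|m_0\hat f\|_{L^2_x}\leq\pi\hat R^2\|\hat f\|_{L^\infty_vL^2_x}$ cannot be combined with \eqref{normelp:delay}, which is an $L^2_{x,v}$ estimate).

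The second problem is your resolution of the "main obstacle": the lemma must hold for an \emph{arbitrary} Leray solution, whereas the smooth fields $u_n$ "built in the existence proof" of Theorem~\ref{thm:weak} converge to \emph{some} Leray solution (no uniqueness is available), and along that scheme only weak-$\star$ compactness of $f_\varepsilon$ is established, not the strong $L^p$ convergence you invoke. The correct fix is to mollify the given field $u$ directly in $L^1_{\rm loc}(\R_+;L^\infty(\Omega))$ (possible since $u\in L^2_{\rm loc}(\R_+^*;H^2)\hookrightarrow L^1_{\rm loc}(\R_+^*;\mathscr C^0(\overline\Omega))$), solve the \emph{linear} Vlasov problem with source $-\hat u_n\cdot\nabla_v\overline f$ for each $u_n$, apply Lemmas~\ref{Lem:Gronwall2} and \ref{Lem:Gronwall'} (the latter via Lemma~\ref{suppfbar}) to $u_n$, and pass to the limit using the stability property of Theorem~\ref{thm:vla}. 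Finally, a wording point that matters: the truncation of the Duhamel integral does not come from "every backward characteristic issued from $\Supp_v\hat f(t)$ reaching $\Sigma^-$ within time $T$" (backward trajectories may be trapped); it comes from the \emph{forward} initial EGC for $K_2$ (killing the $\hat f_0$ term for $t\geq T$) and the \emph{forward} internal lateral EGC for $\Supp\overline f$ (killing source contributions at times $s<t-T$). Your "cleaner route" paragraph implicitly uses the right facts, but the justification should be stated in these terms.
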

%
%
%
%
%
%
\begin{proof}[Proof of Lemma~\ref{lem:nrj:delay}]
The function $\hat{f}$ is a solution in the sense of Theorem~\ref{thm:vla} of the following Vlasov equation
\begin{equation*}
\partial_t\hat{f} + v\cdot \nabla_x \hat{f} +\div_v((u-v) \hat{f}) = -\hat{u} \cdot \nabla_v \overline{f},
\end{equation*}
with homogeneous boundary condition and initial condition $\hat{f}_0:=f_0-\overline{f}$. One can check directly that the conclusions of Theorem~\ref{thm:vla} still hold when adding an integrable source term in the right hand side of the Vlasov equation.  In particular, if $(u_n)$ is a sequence of approximations of $u$ in $L^1_{\textnormal{loc}}(\R_+;L^\infty(\Omega))$ by smooth functions (such a sequence exists since $u\in L^1_{\textnormal{loc}}(\R_+^*;H^2(\Omega))\hookrightarrow L^1_{\textnormal{loc}}(\R_+^*;\mathscr{C}^0(\overline{\Omega}))$), the stability property stated in Theorem~\ref{thm:vla} allows to prove that the corresponding solutions $(\hat{f}_n)$ of
\begin{equation*}
\partial_t\hat{f}_n + v\cdot \nabla_x \hat{f}_n + \div_v((u_n-v) \hat{f}_n) = - \hat{u}_n \cdot \nabla_v \overline{f},
\end{equation*}
satisfy $\hat{f}_n \rightarrow \hat{f}$ in $L^\infty_{\textnormal{loc}}(\R_+;L^p_{\textnormal{loc}}(\overline{\Omega}\times\R^2))$ as $n \rightarrow +\infty$ for any $p\in[1,\infty)$. 
Since $u_n$ is smooth, we may define its characteristics $(X^n, V^n)$ (following the notations of \eqref{Characteristics}) and we have 
\begin{multline*}
 \hat{f}_n(t,x,v) = e^{2t} \hat{f_0}(X^n(t,0,x,v),V^n(t,0,x,v)) \mathbf{1}_{ \tau_-^n(t,x,v)=0 } \\
- \int_0^t e^{2(t-s)} (\hat{u}_n \cdot \nabla_v \overline{f})(s,X^n(t,s,x,v),V^n(t,s,x,v)) \mathbf{1}_{ \tau_-^n(t,x,v)<s } \, ds,
\end{multline*}
where $\tau_-^n(t,x,v)$ is defined as in \eqref{DefTau-} with the corresponding vector field $u_{n}$. \par
When $n\rightarrow +\infty$, $u_n$ gets close to $u$ in $L^1_{\textnormal{loc}}(\R_+;L^\infty(\Omega))$, and we may infer from \eqref{Hyp:nrj:delay} that for $n$ large enough that for all $t\in [T,t^\star]$, 
$$
\| u_n - \overline{u} \|_{L^1(t- T,t; L^\infty(\Omega))} < \delta.
$$ 
Therefore, 
\begin{itemize}
\item according to Lemma~\ref{Lem:Gronwall2}, for $n$ large enough, $u_n$ satisfies the initial EGC in time $T$ with respect to $K_2$. We recall that we assume that $\Supp \hat{f}_0 \subset K_2$. This means that for all $(x,v) \in \Omega\times \R^2$ and all $t\geq T$, we have
$$
e^{2t} \hat{f_0}(X^n(t,0,x,v),V^n(t,0,x,v)) \mathbf{1}_{ \tau_-^n(t,x,v)=0 }=0.
$$
\item According to Lemma~\ref{suppfbar} and Lemma~\ref{Lem:Gronwall'}, for $n$ large enough, $u_n$ satisfies the internal lateral EGC in time $T$ with respect to $\Supp \overline f$ on $[0,t^\star-T]$. This means that for all $(x,v) \in \Omega\times \R^2$ and all $t\in [T,t^\star]$, we have
\begin{multline*}
 \int_0^t e^{2(t-s)} (\hat{u}_n \cdot \nabla_v \overline{f})(s,X^n(t,s,x,v),V^n(t,s,x,v)) \mathbf{1}_{ \tau_-^n(t,x,v)<s } \, ds \\
 =  \int_{t-T}^t e^{2(t-s)} (\hat{u}_n \cdot \nabla_v \overline{f})(s,X^n(t,s,x,v),V^n(t,s,x,v)) \mathbf{1}_{ \tau_-^n(t,x,v)<s } \, ds.
 \end{multline*}

\end{itemize}
 We thus have the simplified expression for any $t\in [T,t^\star]$:
\begin{equation}
\label{eqfn}
\hat{f}_n(t,x,v) = -\int_{t-T}^t e^{2(t-s)} (\hat{u}_n\cdot\nabla_v \overline{f})(s,X^n(t,s,x,v),V^n(t,s,x,v)) \mathbf{1}_{\tau_-^n(t,x,v)<s} \, ds,
\end{equation}
from which we first get
\begin{equation} \label{ineq:fnhlpinf}
\|\hat{f}_n(t)\|_\infty \leq e^{2T} \|\nabla_v \overline{f} \|_\infty \int_{t-T}^t  \|\hat{u}_n(s)\|_\infty  \, ds.
\end{equation}
For $p\in[1,\infty)$ we get from Minkowski's inequality
\begin{equation*}
\|\hat{f}_n(t)\|_{L^p_{x,v}} \leq  e^{2T} \int_{t-T}^t \|\hat{u}_n\cdot\nabla_v \overline{f}(s,X^n(t,s,x,v),V^n(t,s,x,v)) \mathbf{1}_{\tau_-^n(t,x,v)<s}\|_{L^p_{x,v}} \, ds.
\end{equation*}
Since $(X^n,V^n)$ is the flow associated to vector field $(t,x,v) \mapsto (v,u_n(t,x)-v)$ whose divergence in the phase space variables equals $-2$,  using the associated change of variables we are led to
\begin{align} \nonumber
\|\hat{f}_n(t)\|_{L^p_{x,v}} 
& \leq  e^{2T(1-\frac{1}{p})} \int_{t-T}^t \| \hat{u}_n(s) \cdot \nabla_v \overline{f} \|_{L^p_{x,v}} \, ds \\
\label{ineq:fnhlp}
& \leq  e^{2T} (\pi R^2)^{1/p} \| \nabla_{v} \overline{f} \|_{\infty} \int_{t-T}^t \|\hat{u}_n(s)\|_{L^p_{x}} \, ds.
\end{align}
On the other hand, from a view of~\eqref{eqfn}, for any $v \in \Supp_v\hat{f}_n(t)$ we have at least one $s\in[t-T,t]$ such that $V^n(t,s,x,v) \in B(0,R)$ from which,
thanks to the velocity equation in \eqref{Characteristics}, we deduce
$$
v \in B(0,R + \| u_n \|_{L^1([t-T,t];L^\infty(\Omega))} ),
$$
so that $\Supp_v \hat{f}_n \subset B(0,\hat{R})$.
Since $(\hat{f}_n)$ converges to $\hat{f}$ in $L^\infty_{\textnormal{loc}}(\R_+; L^p_{\textnormal{loc}}({\Omega}\times\R^2))$ for any $p \in [1,\infty)$, we  get $\Supp_v \hat{f} \subset B(0,\hat{R})$ and that the previous convergence in fact holds in $L^\infty_{\textnormal{loc}}(\R_+; L^p({\Omega}\times\R^2))$.
In particular, we may pass to the limit in \eqref{ineq:fnhlpinf} and \eqref{ineq:fnhlp} to first get \eqref{normelp:delay} for a.e. $t\geq T$ and then for all $t\geq T$ using that $\hat{f} \in \mathscr{C}^0(\R_+; L^\infty(\Omega\times\R^2)-w\star)$, so that point 1. is proven. \par
\ \par
Let us now focus on point 2.; the velocity field $\hat{u}$ solves the following equation
\begin{equation*}
\partial_{t} \hat{u} + (u \cdot \nabla) \hat{u} + (\hat{u} \cdot \nabla) \overline{u} - \Delta \hat{u} + \nabla \hat{p}
= j_{\hat{f}} - (m_0 f) \hat{u}  - (m_0 \hat{f}) \overline{u}.
\end{equation*}
Since $\hat{u}\in \mathscr{C}^0(\R_+^*;H^1(\Omega))\cap \mathscr{C}^1(\R_+^*;L^2(\Omega))$, we can multiply by $\hat{u}$ and integrate over $\Omega$, to get (using that $f$ is nonnegative)
\begin{equation*}
\frac{1}{2}\frac{d}{dt}\| \hat{u}(t) \|_2^2 + \| \nabla \hat{u}(t) \|_2^2
\leq \| \nabla \overline{u} \|_\infty \| \hat{u}(t) \|_2^2
+ \|\overline{u}\|_\infty (m_0 \hat{f}(t),\hat{u}(t))_{L^2(\Omega)}
+ (j_{\hat{f}}(t),\hat{u}(t))_{L^2(\Omega)}.
\end{equation*}
%
Since $\hat{u}$ vanishes on the boundary, we may use the Poincar\'e inequality, with optimal constant $C_{\textnormal{Po},\Omega}$.
\begin{equation*}
\frac{1}{2}\frac{d}{dt} \| \hat{u}(t) \|_2^2 + \frac{1}{C_{\textnormal{Po},\Omega}^{2}} \| \hat{u}(t) \|_2^2 
 \leq \| \nabla \overline{u} \|_\infty \| \hat{u}(t) \|_2^2 
+ \Big[ \| \overline{u} \|_\infty \| m_0 \hat{f}(t) \|_2 + \| j_{\hat{f}}(t)\|_2 \Big]  \| \hat{u}(t) \|_{L^2(\Omega)},
\end{equation*}
Using the bound $\| \nabla \overline{u} \|_\infty \leq K_\Omega$ (where we recall $K_\Omega$ was defined in~\eqref{DefKOmega}), we get
\begin{equation*}
\frac{d}{dt} \| \hat{u}(t) \|_2 + K_\Omega \| \hat{u}(t) \|_2 
\leq  (\| \overline{u} \|_\infty \| m_0 \hat{f}(t) \|_2 + \| j_{\hat{f}}(t) \|_2 ).
\end{equation*}
As we have proved above, $\Supp_v \hat{f}(t) \subseteq B(0,\hat{R})$ with $\hat{R}$ given in \eqref{Eq:DefHatR}, so that
\begin{equation*}
\|m_0 \hat{f}(t)\|_2 \leq \pi^{1/2} \hat{R} \| \hat{f}(t) \|_2,
\end{equation*}
and similarly
\begin{equation*}
\|j_{\hat{f}}(t)\|_2 \leq \pi^{1/2} \hat{R}^{2} \| \hat{f}(t) \|_2.
\end{equation*}
In the end we get
\begin{equation*}
\frac{d}{dt}\|\hat{u}(t)\|_2 + K_\Omega \| \hat{u}(t) \|_2 
\leq \pi^{1/2} \hat{R} (\|\overline{u}\|_\infty + \hat{R}) \| \hat{f}(t) \|_2, 
\end{equation*}  
and the conclusion follows using case $p=2$ of point 1. 
\end{proof}
%
%
%
%
%
%
%
%
%
\subsection{The continuity argument and a delayed Gronwall inequality}
We are now in position to prove Theorem~\ref{thm-expo}. \par
\begin{proof}[Proof of Theorem~\ref{thm-expo}]
The proof is based on the closeness  of $(\overline{u}, \overline{f})$ to the reference state $(u_p,0)$ in $\mathcal{E} \times W^{1,\infty}(\Omega\times\R^2)$, but also on the one of $(u_0,f_0)$ to $(\overline{u},\overline{f})$ in $L^2(\Omega) \times L^\infty(\Omega\times\R^2)$. To measure this closeness we introduce two parameters $\ep_0$ and $\ep_1$ such that 
\begin{align}
\label{eq:ep0} \|\overline{f}\|_{W^{1,\infty}(\Omega\times\R^2)} + \|\overline{u}-u_p\|_{\mathcal{E}} &\leq \ep_0,\\
\label{eq:ep1} \|f_0-\overline{f}\|_{\infty} + \|u_0-\overline{u}\|_{2} &\leq \ep_1.
\end{align}
Once these two parameters are fixed, the neighborhood $O$ will then simply be the ball of center $(u_p,0)$ and radius $\ep_0$ in $\mathcal{E} \times W^{1,\infty}(\Omega\times\R^2)$. 
The idea is to use Lemma~\ref{lem:nrj:delay}
to prove the stability. We therefore have to ensure that properties like~\eqref{Hyp:nrj:delay} are satisfied on $\R_+$, that is to say to ensure that the EGC properties are propagated. Before heading to the main matter of the proof, let us treat the case of small times
\begin{lem}\label{lem:nrj:delay:small}
There is $\ep_1>0$ small enough so that
\begin{equation} \label{Hyp:nrj:delay:small}
\forall t \in [T,3T/2], \quad \|u - u_p \|_{L^1(t- T,t; L^\infty(\Omega))} < \delta.
\end{equation}
\end{lem}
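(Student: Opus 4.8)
The plan is to deduce \eqref{Hyp:nrj:delay:small} directly from the small-time regularity estimate \eqref{ineq:ell} of Proposition~\ref{propo:nrj-def}. First I would observe that for $t\in[T,3T/2]$ the interval $(t-T,t)$ is contained in $(0,3T/2)$, so it suffices to bound $\|u-u_p\|_{L^1(0,3T/2;L^\infty(\Omega))}$ and to make it smaller than $\delta$ by taking the parameters $\ep_0,\ep_1$ of \eqref{eq:ep0}--\eqref{eq:ep1} small enough. Writing $\widetilde{u}=u-u_p$, I would invoke \eqref{ineq:ell} with time length $3T/2$ in place of $T$: for every $a\in(0,3T/4)$ the corresponding quantity $c=\max(a-3T/4,0)$ vanishes, so that \eqref{ineq:ell} gives
$$\int_a^{a+3T/2}\|\widetilde{u}(s)\|_\infty\,ds\le C_{\Omega,3T/2,R,u_p}\big(\|f_0\|_\infty,\,M_4f_0,\,\|u_0-u_p\|_2,\,\|\psi\|_\infty\big),$$
with a right-hand side that does not depend on $a$; letting $a\downarrow 0$ and using Fatou's lemma then yields the same bound for $\int_0^{3T/2}\|\widetilde{u}(s)\|_\infty\,ds$.

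The second step would be to check that the four arguments of $C_{\Omega,3T/2,R,u_p}$ above are all $O(\ep_0+\ep_1)$. The key point is that in this setting the incoming datum $\psi$ is itself small: since $(\overline{u},\overline{f})$ belongs to the neighborhood $O$, \eqref{eq:ep0} gives $\|\overline{f}\|_{W^{1,\infty}(\Omega\times\R^2)}\le\ep_0$, and $\psi$ is the trace on $\Gamma^l$ of the Lipschitz (hence continuous up to the boundary) function $\overline{f}$ by the boundary condition \eqref{CBf1}, so $\|\psi\|_{L^\infty}\le\ep_0$. In the same way $\|f_0\|_\infty\le\|f_0-\overline{f}\|_\infty+\|\overline{f}\|_\infty\le\ep_1+\ep_0$ by \eqref{eq:ep1}; since $\Supp_v f_0\subset B(0,R)$ by the choice of $R$, this also gives $M_4f_0\le C_{\Omega,R}(\ep_0+\ep_1)$; and $\|u_0-u_p\|_2\le\|u_0-\overline{u}\|_2+|\Omega|^{1/2}\|\overline{u}-u_p\|_{\mathcal E}\le\ep_1+|\Omega|^{1/2}\ep_0$ by \eqref{eq:ep1} and \eqref{eq:ep0}. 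As $C_{\Omega,3T/2,R,u_p}$ is continuous, nondecreasing, and vanishes at the origin, the resulting bound on $\int_0^{3T/2}\|\widetilde{u}(s)\|_\infty\,ds$ tends to $0$ as $\ep_0,\ep_1\to 0$, and is therefore $<\delta$ once $\ep_0$ (hence also $\ep_1$) is chosen small enough; this is exactly \eqref{Hyp:nrj:delay:small}.

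I do not expect a genuine obstacle in this argument. The only slightly delicate point is that \eqref{ineq:ell} is stated only on intervals $(a,a+3T/2)\subset\R_+^*$, which forces the limit $a\downarrow 0$ to reach the interval $(0,3T/2)$; this is harmless since the bound in \eqref{ineq:ell} is uniform in $a$. One could alternatively try to estimate $u-\overline{u}$ (rather than $u-u_p$) directly, but this would require rerunning, on the perturbation equation, the parabolic-regularity and Br\'ezis--Gallou\"et analysis underlying the proof of \eqref{ineq:ell}; the shortcut above avoids this entirely, by exploiting that in the present regime \emph{every} datum feeding \eqref{ineq:ell}, including $\psi$, is of size $\ep_0+\ep_1$.
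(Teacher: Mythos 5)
Your proposal is correct and follows essentially the paper's own argument: the paper proves this lemma in one line by invoking the regularity estimate \eqref{ineq:ell} of Proposition~\ref{propo:nrj-def} to control $\|u-u_p\|_{L^1(0,3T/2;L^\infty(\Omega))}$, exactly as you do, with the smallness of the right-hand side coming from \eqref{eq:ep0}--\eqref{eq:ep1} (including $\|\psi\|_\infty\le\ep_0$ via $\overline{f}=\psi$ on $\Gamma^l$) and the fact that $C_{\Omega,T,R,u_p}$ vanishes continuously at $0$. Your extra care about the endpoint $a\downarrow 0$ of the interval in \eqref{ineq:ell} is a harmless technical refinement, not a different route.
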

\begin{proof}[Proof of Lemma \ref{lem:nrj:delay:small}] 
We simply use the fourth item in Proposition~\ref{propo:nrj-def} to control $\|u-u_p\|_{L^1(0,3T/2;L^\infty(\Omega))}$.
\end{proof}
\par
\bigskip
Define
\begin{align*}
t^\star := \sup \{ t \geq \tfrac{3T}{2} \, : \, \| u - u_p \|_{L^1(s-T,s;L^\infty(\Omega))} < \delta,
\, \forall s\in[\tfrac{3T}{2},t] \}.
\end{align*}
%
%
%
We still fix $R>0$ such that $\Supp_v f_0 + \Supp_v \overline{f} \subset B(0,R)$. We will use again in what follows the notations 
\begin{align*}
\widetilde{u}&= u - u_p, \\
\hat{u}&= u- \overline{u}, \\
\hat{f}&= f- \overline{f}.
\end{align*}
Since $\overline{f}=\psi$ on $\Gamma^l$, the regularity estimate \eqref{ineq:ell} implies
\begin{equation*}
\int_{T/2}^{3T} \| \widetilde{u}(s) \|_\infty \, ds
\leq C_{\Omega, T, R, u_p} (\| \overline{f} \|_\infty, \| f_0 \|_\infty, M_4 f_0, \| \widetilde{u}(0) \|_2).
\end{equation*}
For $\ep_0$ and $\ep_1$ small enough in  \eqref{eq:ep0} - \eqref{eq:ep1}, we have $t^\star \geq 3T$: in particular $[\frac{3T}{2},t^\star)$ is nonempty. Now our goal is to prove that $t^\star =+ \infty$ so that we have the corresponding estimates for all positive times. \par
\ \par
To that purpose we assume temporarily that $t^\star < +\infty$ and invoke once more the regularity estimate \eqref{ineq:ell} to write,
for any $s\in [t^\star, t^\star+\frac{T}{2}]$ 
\begin{equation} \label{EstTU}
\| \widetilde{u} \|_{L^1(s-T,s; L^\infty(\Omega))}  \leq C_{\Omega,T,R,u_p} ( \| \overline{f} \|_\infty, \|f(s-\tfrac{3T}{2})\|_\infty, 
M_4 f(s-\tfrac{3T}{2}), \| \widetilde{u}(s - \tfrac{3T}{2}) \|_2 ),
\end{equation}
with $C_{\Omega,T,R,u_p}$ vanishing continuously at $0$. To estimate the second and third arguments in the right hand side, we use Lemma~\ref{lem:nrj:delay}. Note that we can apply this result because of the instantaneous regularization of Leray solutions: $u \in L^{2}_{\rm loc}(\R_{+}^*; H^2(\Omega))\cap\mathscr{C}^0(\R_+^*;H^1(\Omega)) \cap \mathscr{C}^1(\R_+^*;L^2(\Omega))$ (see Proposition~\ref{propo:nrj-def}). We obtain from the first part of Lemma \ref{lem:nrj:delay} that for all $t$ in $[\frac{3T}{2}, t^\star)$:
\begin{equation*}
\| f(t) \|_\infty \leq \| \overline{f} \|_\infty + \| \hat{f} \|_\infty 
\leq \| \overline{f} \|_\infty + e^{2T} \| \nabla_v \overline{f} \|_\infty \delta.
\end{equation*}
We have also $\textnormal{Supp}_v \hat{f}(t) \subset B(0,\hat{R})$ with $\hat{R}=R +\delta+T\|u_p\|_\infty$, thus 
\begin{align*}
 M_4 f(t) &\leq  M_4 \overline{f} + M_4 |\hat{f}| \\
 &\leq \pi R^6 \|\overline{f}\|_\infty + \pi \hat{R}^6 \|\hat{f}\|_{\infty} \\
 &\leq \pi R^6 \|\overline{f}\|_\infty + e^{2T} \pi \hat{R}^6 \|\nabla_v \overline{f}\|_\infty \delta,
\end{align*}
so that we actually established  for $t\in [\frac{3T}{2},t^\star)$ 
\begin{equation} \label{ineq:bootstrap}
\|f(t)\|_\infty + M_4 f(t) \leq C_{\Omega,T,R,u_p,\delta}(\|\overline{f}\|_{W^{1,\infty}}).
\end{equation} 
Going back to \eqref{EstTU}, for any $s\in [t^\star, t^\star+\frac{T}{2}]$ (so that $s-\frac{3T}{2} \in [\frac{3T}{2},t^\star)$), we may use \eqref{ineq:bootstrap} to write
\begin{equation} \label{eq:super}
\| \widetilde{u} \|_{L^1(s-T,s; L^\infty(\Omega))}
\leq C_{\Omega,T,R,u_p,\delta} (\|\overline{f}\|_{W^{1,\infty}}, \| \widetilde{u}(s - \tfrac{3T}{2}) \|_2).
\end{equation}
Since $C_{\Omega,T,R,u_p,\delta}$ vanishes continuously at $(0,0)$ we assume from now on that $ \|\overline{f}\|_{W^{1,\infty}}$ is small enough (reducing again $\varepsilon_{0}$ in \eqref{eq:ep0} if necessary) so that there exists $\tilde{\varepsilon} >0$ such that 
\begin{equation} \label{eq:conti}
0 \leq A < \tilde{\varepsilon} \Longrightarrow C_{\Omega,T,R,u_p,\delta} ( \|\overline{f}\|_{W^{1,\infty}}, A ) < \delta.
\end{equation}
Now it only remains to check that $\| \widetilde{u}(s-\tfrac{3T}{2}) \|_2$ is indeed less or equal to  $\tilde{\varepsilon}$ for suitable data.
To this purpose, we use the second part of Lemma~\ref{lem:nrj:delay} to write, for all $t \in [\frac{3T}{2},t^{\star}]$,
\begin{equation} \label{IneqDiff}
\frac{d}{dt} \| \hat{u}(t) \|_{L^2_x} + K_\Omega \| \hat{u}(t) \|_{L^2_x} \leq  \alpha \int_{t-T}^t \| \hat{u}(s) \|_{L^2_x} \, ds,
\end{equation}
where $\alpha$ is given by \eqref{Eq:DefAlpha}, that is $\alpha =  R \pi e^{2T} \|\nabla_v \overline{f}\|_\infty \hat{R}(\|\overline{u}\|_\infty +\hat{R})$.
Modifying again $\varepsilon_{0}$ in \eqref{eq:ep0} if necessary, we impose that $\| \overline{f} \|_{W^{1,\infty}}$ is small enough in order to ensure 
\begin{equation} \label{ineq:alpha}
\alpha < \frac{K_\Omega}{T}.
\end{equation}
Now we can use the following  Gronwall type result, whose proof we temporarily delay.
\begin{lem} \label{retarded}
Let $\kappa,\alpha, T>0$ such that $\alpha<\kappa/T$. Assume that $y\in\mathscr{C}^1(\R_+^*)\cap\mathscr{C}^0(\R_+)$ satisfies for $t \in (\frac{3T}{2},t^\star)$ the delayed inequality
\begin{equation} \label{ineq:delay}
y'(t) + \kappa y(t) \leq \alpha \int_{t-T}^t y(s) \, ds.
\end{equation}
Then there exists $\lambda>0$ such that for $t \in [0,t^\star)$,
\begin{equation} \label{ineq:delay:exp}
y(t) \leq H_{\lambda,T} e^{-\lambda t},
\end{equation}
where $H_{\lambda,T}:= \sup_{t \in [0,\frac{3T}{2}]} |y(t)| e^{\lambda t}$. Moreover, $\lambda$ can be chosen as a non-increasing function of $\alpha$.
\end{lem}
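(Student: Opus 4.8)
The plan is to construct an explicit exponential supersolution of the delayed inequality \eqref{ineq:delay} and then run a barrier (continuity) argument. Taking $z(t):=Ce^{-\lambda t}$ with $C>0$ and $0<\lambda<\kappa$, one computes the two sides of \eqref{ineq:delay}:
\[
z'(t)+\kappa z(t)=(\kappa-\lambda)\,Ce^{-\lambda t},\qquad \alpha\int_{t-T}^{t}z(s)\,ds=\alpha\,Ce^{-\lambda t}\,\frac{e^{\lambda T}-1}{\lambda},
\]
so that $z$ is a \emph{strict} supersolution of \eqref{ineq:delay} precisely when $\lambda(\kappa-\lambda)>\alpha(e^{\lambda T}-1)$, a condition that does not involve $C$. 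Consider $h_\alpha(\lambda):=\lambda(\kappa-\lambda)-\alpha(e^{\lambda T}-1)$; then $h_\alpha(0)=0$ and $h_\alpha'(0)=\kappa-\alpha T>0$, which is exactly where the hypothesis $\alpha<\kappa/T$ is used. Hence $h_\alpha>0$ on a nonempty interval $(0,\lambda_1(\alpha))$ with $\lambda_1(\alpha)\le\kappa$, and I would set $\lambda:=\lambda(\alpha):=\tfrac12\lambda_1(\alpha)\in(0,\kappa)$, which gives a genuinely strict supersolution inequality. Monotonicity of $\lambda$ in $\alpha$ is then immediate: since $e^{\lambda T}-1\ge 0$, one has $h_{\alpha'}\le h_\alpha$ pointwise for $\alpha\le\alpha'$, so the admissible interval can only shrink, whence $\alpha\mapsto\lambda(\alpha)$ is non-increasing.

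For the comparison step, I would fix $\varepsilon>0$ and put $z_\varepsilon(t):=(H_{\lambda,T}+\varepsilon)e^{-\lambda t}$, which is still a strict supersolution (the constant factors out). By definition of $H_{\lambda,T}$ we have $y(t)\le H_{\lambda,T}e^{-\lambda t}<z_\varepsilon(t)$ on the compact set $[0,\tfrac{3T}{2}]$, so $\min_{[0,3T/2]}(z_\varepsilon-y)>0$ and, by continuity, this strict inequality persists on $[0,\tfrac{3T}{2}+\eta]$ for some $\eta>0$. Assume for contradiction that $y>z_\varepsilon$ somewhere on $(\tfrac{3T}{2},t^\star)$ and let $t_0$ be the infimum of that set; then $t_0\ge\tfrac{3T}{2}+\eta$, so $t_0\in(\tfrac{3T}{2},t^\star)$ (where \eqref{ineq:delay} is valid), and $y\le z_\varepsilon$ on $[0,t_0]$ with $y(t_0)=z_\varepsilon(t_0)$ by continuity. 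From $y-z_\varepsilon\le 0$ on $[0,t_0]$ with equality at $t_0$ one gets $(y-z_\varepsilon)'(t_0)\ge 0$. On the other hand, since $[t_0-T,t_0]\subset[\tfrac{T}{2},t_0]\subset[0,t_0]$, using \eqref{ineq:delay} at $t_0$ and the strict supersolution property of $z_\varepsilon$,
\[
y'(t_0)+\kappa y(t_0)\le\alpha\int_{t_0-T}^{t_0}y(s)\,ds\le\alpha\int_{t_0-T}^{t_0}z_\varepsilon(s)\,ds<z_\varepsilon'(t_0)+\kappa z_\varepsilon(t_0),
\]
and as $y(t_0)=z_\varepsilon(t_0)$ this forces $(y-z_\varepsilon)'(t_0)<0$, a contradiction. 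Therefore $y\le z_\varepsilon$ on all of $[0,t^\star)$; letting $\varepsilon\to0$ yields $y(t)\le H_{\lambda,T}e^{-\lambda t}$ on $[\tfrac{3T}{2},t^\star)$, while on $[0,\tfrac{3T}{2}]$ the bound $y(t)\le|y(t)|\le H_{\lambda,T}e^{-\lambda t}$ holds by definition of $H_{\lambda,T}$. This establishes \eqref{ineq:delay:exp}.

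The only delicate point is the bookkeeping near the threshold $\alpha<\kappa/T$: one must select $\lambda$ strictly inside the admissible range $(0,\lambda_1(\alpha))$ — so that the supersolution inequality is strict and the barrier argument closes without any degenerate contact-point analysis — while keeping the choice non-increasing in $\alpha$; the ``half the admissible length'' device does both simultaneously. Everything else (the Gronwall-type computations and the continuity argument) is routine, and the fact that $y$ is only $\mathscr{C}^0$ at $t=0$ is harmless since the contact point $t_0$ always lies in $(\tfrac{3T}{2},t^\star)\subset\R_+^*$.
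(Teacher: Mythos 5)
Your proof is correct and follows essentially the same route as the paper: an exponential comparison function $e^{-\lambda t}$ whose rate is fixed by the relation $\lambda(\kappa-\lambda)$ versus $\alpha(e^{\lambda T}-1)$ (the paper takes the exact positive root of $\lambda^2-\kappa\lambda+\alpha(e^{\lambda T}-1)$, you take half the admissible threshold to get a strict supersolution), combined with a first-contact-point contradiction using \eqref{ineq:delay} on $(\tfrac{3T}{2},t^\star)$ and removal of the strictness margin by a limit (your additive $\varepsilon$ playing the role of the paper's multiplicative $\gamma>1$). The monotonicity of $\lambda$ in $\alpha$ is likewise obtained by comparing the characteristic functions, so there is no gap.
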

Thanks to~\eqref{IneqDiff} and \eqref{ineq:alpha}, we can apply Lemma~\ref{retarded} to $y(t)= \| \hat{u}(t) \|_{2}$. 
This implies the existence of $\lambda >0$ such that
\begin{align} \label{ineq:lambda}
\|\hat{u}(t)\|_2 \leq H_{\lambda,T} e^{-\lambda t},
\end{align}
where here 
\begin{equation} \label{HHere}
H_{\lambda,T}= \sup_{t \in [0,\frac{3T}{2}]} \| \hat{u}(t) \|_2 e^{\lambda t}. 	
\end{equation}
Note that $\lambda$ may depend on $\| \nabla_v \overline{f}\|_\infty$, but due to the monotonicity with respect to $\alpha$ in Lemma~\ref{retarded}, one may reduce again the maximal size of $\| \nabla_v \overline{f}\|_\infty$ later,  but without further modifying $\lambda$. \par
Concerning $H_{\lambda,T}$, thanks to the inequality \eqref{ineq:nrj-def} (since $\overline{f} = \psi$ on $\Gamma^l$), we estimate
\begin{equation*}
\sup_{t \in [0,\frac{3T}{2}]}\|\hat{u}(t)\|_2 \leq \|\overline{u}-u_p\|_2+ C_{\Omega,T,R,u_p}(\|\overline{f}\|_\infty,\|f_0\|_\infty,M_2f_0,\|\widetilde{u}(0)\|_2).
\end{equation*}
Then, using \eqref{ineq:lambda} and \eqref{HHere} we deduce that for $t$ in the whole interval $[0,t^\star)$, one has
\begin{equation*}
\| \hat{u}(t) \|_2 \leq C_{\Omega,T,R,u_p,\lambda} (\|\overline{u}-u_p\|_2, \|\overline{f}\|_\infty, \|f_0\|_\infty, M_2f_0, \|\widetilde{u}(0)\|_2) e^{-\lambda t}.
\end{equation*}
Now, by choosing again $\varepsilon_{0}<\tilde{\ep}/(4\sqrt{L})$ sufficiently small in \eqref{eq:ep0}, we can require that $\| \overline{u} - u_p \|_{L^2(\Omega))}$ and $\|\overline{f}\|_\infty$ are small enough to ensure the existence of $\tilde{\ep}'>0$ such that
$$
0 \leq B_1,B_2,B_3 < \tilde{\varepsilon}' \Longrightarrow C_{\Omega,T,R,u_p,\lambda} ( \|\overline{u}-u_p\|_2, \|\overline{f}\|_\infty, B_1,B_2,B_3 ) < \tilde{\varepsilon}/2.
$$
Therefore, taking $\ep_1$ revelantly in \eqref{eq:ep1} we can assume moreover that $\| f_0 \|_\infty$, $M_2 f_0$ and $\| \widetilde{u}(0) \|_2$ are also small enough to get
\begin{align*}
\| \tilde{u}(t^\star-\tfrac{3T}{2}) \|_2  &< \| \hat{u}(t^\star-\tfrac{3T}{2}) \|_2 + \|\bar{u}-u_p\|_2 \\
&< \tilde{\ep}/2 + \ep_0\\
&< \tilde{\ep}.
\end{align*} 
Recalling~\eqref{eq:super} and \eqref{eq:conti}, we deduce that
$$
\|u-u_p\|_{L^1(t^\star-T,t^\star;L^\infty(\Omega))} < \delta.
$$
However, according to the definition of $t^\star$, by continuity we have
$$
\|u-u_p\|_{L^1(t^\star-T,t^\star;L^\infty(\Omega))} =\delta,
$$
which is a contradiction. As a result, we necessarily have $t^\star=+\infty$ and the estimate \eqref{ineq:lambda} holds on $\R_+$. \par
%

The exponential convergences for $\hat{u}$ and  $\hat{f}$ finally follow from another use of Lemma~\ref{lem:nrj:delay} and Lemma~\ref{retarded}.
\end{proof}
\ \par
Now there only remains to prove Lemma~\ref{retarded}.
\begin{proof}[Proof of Lemma~\ref{retarded}]
First notice that if $y$ is negative on $[0,\frac{3T}{2}]$, then it remains negative on $[0,t^\star)$.
Indeed in that case, if $t^-:=\sup\,\{t\in [0,t^\star)\,:\, y_{|[0,t]} < 0\} < t^\star$, then by continuity, we necessarily have
$$
y(t^-) =0 \text{ and } y'(t^-)\geq 0.
$$
But from a view of \eqref{ineq:delay} at $t= t^-$, we infer that
$$
y'(t^-) + \kappa y(t^-) <0,
$$
which is a contradiction. \par
%
%
%
For \eqref{ineq:delay:exp}, it is then sufficient to prove the existence of $\lambda>0$ such that $z:t\mapsto e^{-\lambda t}$ is solution of
\begin{align} \label{eq:delay:exp}
z'(t) + \kappa z(t) = \alpha \int_{t-T}^t z(s) d s.
\end{align}
Indeed, if such $\lambda$ exists, for any $ \gamma > 1$, the function $t \mapsto y- \gamma H_{\lambda,T}e^{-\lambda t}$ satisfies \eqref{ineq:delay} and clearly $t \mapsto y- \gamma H_{\lambda,T}e^{-\lambda t}< 0$ on $[0,\frac{3T}{2}]$, so that the previous remark implies  \eqref{ineq:delay:exp}. \par
Now going back to \eqref{eq:delay:exp}, we see that it comes down to find  $\lambda>0$ such that
\begin{align*}
-\lambda +\kappa =\frac{\alpha}{\lambda}(e^{\lambda T} -1),
\end{align*}
or equivalently to find a positive root for the function $\varphi:\lambda\mapsto \lambda^2 -\lambda \kappa + \alpha(e^{\lambda T}-1)$. We have $\varphi(0)=0$ and $\lim_{+\infty} \varphi = +\infty$. The assumption $\alpha< \kappa /T$ implies $\varphi'(0)  <0$, from which we deduce by continuity the existence of  $\lambda>0$ such that $\varphi(\lambda)=0$. Using the convexity of $\varphi$ and $\varphi(\beta) >0$ for $\beta > \alpha$, we see that this $\lambda$ is unique and one deduces the monotonicity of $\lambda$ with respect to $\alpha$.

\end{proof}

\section{Appendix}
\label{Sec:Appendix}
In this Appendix we gather several technical results used in the proofs. 
\subsection{Regularity estimates for Navier-Stokes system in a rectangle}
\begin{thm}\label{th:exNSpol}
Fix $\Omega=(-L,L)\times(-1,1)$ a rectangle, and consider $u_0\in H^1_0(\Omega)$ and $F\in L^2_{\textnormal{loc}}(\R_+;L^2(\Omega))$. There exists a unique solution $(\widetilde{u},\widetilde{p})$ of 
\begin{align*}
  \partial_t \widetilde{u} + (\widetilde{u}\cdot \nabla ) \widetilde{u} - \Delta \widetilde{u} + \nabla \widetilde{p} &= F,\\
\div \,\widetilde{u} &=0,\\
u_{|t=0}&= u_0.
\end{align*}
such that $\widetilde{u}\in \mathscr{C}^0(\R_+;H^1_0(\Omega))\cap L^2_{\textnormal{loc}}(\R_+;H^2(\Omega))$, $\partial_t \widetilde{u} \in L^2_{\textnormal{loc}}(\R_+;L^2(\Omega))$ and $\widetilde{p}\in L^2_{\textnormal{loc}}(\R_+;H^1(\Omega))$.
For any $T>0$ this solution satisfies furthermore for $t \in [0,T]$
\begin{align}
\label{ineq:nrjnspol}
\frac{1}{2} \|\widetilde{u}(t)\|_{L^2(\Omega)}^2 + \int_0^t \|\nabla \widetilde{u}(s)\|_{L^2(\Omega)}^2 \, ds
&= \frac{1}{2}\|u_0\|_{L^2(\Omega)}^2 + \int_0^t \langle F(s),\widetilde{u}(s)\rangle_{L^2(\Omega)} \,ds, \\
\label{ineq:ellnspol}
\| \widetilde{u}(t)\|_{H^1(\Omega)}^2  + \int_0^T \|\widetilde{u}(s)\|_{H^2(\Omega)}^2 \, ds +\int_0^T \|\partial_t \widetilde{u}(s)\|_{L^2(\Omega)}^2 \, ds
& \leq C_{T,\Omega}\Big(\|u_0\|_{H^1(\Omega)},\|F\|_{L^2((0,T)\times\Omega)}\Big),
\end{align}
where $C_{T,\Omega}(\cdot,\cdot)$ is a positive continuous function vanishing at $(0,0)$ and nondecreasing in with respect to each of its variable.
\end{thm}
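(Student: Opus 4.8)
The plan is to prove Theorem~\ref{th:exNSpol} by the standard Faedo--Galerkin method combined with the parabolic smoothing specific to the two-dimensional Navier--Stokes equations, and then to pass to the limit in the \emph{a priori} estimates to obtain \eqref{ineq:nrjnspol} and \eqref{ineq:ellnspol}. First I would fix an orthonormal basis $(w_k)_{k\geq 1}$ of the space $H:=\{v\in L^2(\Omega):\div v=0,\ v\cdot n=0\text{ on }\partial\Omega\}$ consisting of eigenfunctions of the Stokes operator $A=-\mathbb{P}\Delta$ (with domain $H^2\cap H^1_0$, $\mathbb{P}$ the Leray projector), so that the $w_k$ are smooth and the associated projectors $P_N$ are stable on $H^1_0$ and on $D(A)$. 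For each $N$ one solves the finite-dimensional ODE system for $\widetilde u_N(t)=\sum_{k\le N}g_k(t)w_k$ obtained by testing the equation against $w_1,\dots,w_N$; local existence is immediate from Cauchy--Lipschitz, and global existence will follow once the energy estimate is established.

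The core is the hierarchy of \emph{a priori} estimates, all performed on $\widetilde u_N$ and hence legitimate at the Galerkin level. Step~(a): testing against $\widetilde u_N$ and using $((\widetilde u_N\cdot\nabla)\widetilde u_N,\widetilde u_N)=0$ gives the energy identity, yielding $\widetilde u_N$ bounded in $L^\infty_{\rm loc}(\R_+;L^2)\cap L^2_{\rm loc}(\R_+;H^1_0)$ with a bound of the form $C(\|u_0\|_2,\|F\|_{L^2((0,T)\times\Omega)})$ after a Young/Gronwall argument on the term $\langle F,\widetilde u_N\rangle$. Step~(b): testing against $A\widetilde u_N$ (equivalently, in 2D, differentiating $\|\nabla\widetilde u_N\|_2^2$), one must control the trilinear term $|((\widetilde u_N\cdot\nabla)\widetilde u_N,A\widetilde u_N)|$; here the 2D Ladyzhenskaya inequality $\|v\|_4\le C\|v\|_2^{1/2}\|\nabla v\|_2^{1/2}$ (together with Agmon/elliptic regularity $\|v\|_{H^2}\le C\|Av\|_2$ for the rectangle, which is the point where the polygonal-domain elliptic regularity from \cite{frosties} enters) gives $|((\widetilde u_N\cdot\nabla)\widetilde u_N,A\widetilde u_N)|\le C\|\widetilde u_N\|_2^{1/2}\|\nabla\widetilde u_N\|_2\|A\widetilde u_N\|_2^{3/2}\le \tfrac14\|A\widetilde u_N\|_2^2+C\|\widetilde u_N\|_2^2\|\nabla\widetilde u_N\|_2^4$, and the forcing term is absorbed as $\langle F,A\widetilde u_N\rangle\le \tfrac14\|A\widetilde u_N\|_2^2+\|F\|_2^2$. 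This yields a differential inequality $\frac{d}{dt}\|\nabla\widetilde u_N\|_2^2+\|A\widetilde u_N\|_2^2\le C\|F\|_2^2+C\|\widetilde u_N\|_2^2\|\nabla\widetilde u_N\|_2^2\cdot\|\nabla\widetilde u_N\|_2^2$, and since $\int_0^T\|\nabla\widetilde u_N\|_2^2\,ds$ is already controlled from Step~(a), the uniform Gronwall lemma closes the estimate: $\widetilde u_N$ is bounded in $L^\infty_{\rm loc}(\R_+;H^1_0)\cap L^2_{\rm loc}(\R_+;H^2)$ by $C_{T,\Omega}(\|u_0\|_{H^1},\|F\|_{L^2((0,T)\times\Omega)})$, with the constant vanishing at $(0,0)$ because all the right-hand sides are at least quadratic in the data. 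Step~(c): from the equation, $\partial_t\widetilde u_N=\mathbb P F-A\widetilde u_N-\mathbb P((\widetilde u_N\cdot\nabla)\widetilde u_N)$, and the convective term is bounded in $L^2_{\rm loc}(\R_+;L^2)$ via $\|(\widetilde u_N\cdot\nabla)\widetilde u_N\|_2\le\|\widetilde u_N\|_4\|\nabla\widetilde u_N\|_4\le C\|\widetilde u_N\|_{H^1}\|\widetilde u_N\|_{H^2}$ interpolation in 2D, so $\partial_t\widetilde u_N$ is bounded in $L^2_{\rm loc}(\R_+;L^2)$; recovering the pressure $\widetilde p\in L^2_{\rm loc}(\R_+;H^1)$ is then classical via De Rham's theorem and the $L^2$ estimate for $\nabla\widetilde p=\Delta\widetilde u_N-\partial_t\widetilde u_N-(\widetilde u_N\cdot\nabla)\widetilde u_N-\mathbb{P}^\perp$-corrections.

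With these uniform bounds, I would pass to the limit: up to a subsequence $\widetilde u_N\rightharpoonup\widetilde u$ weakly-$\star$ in $L^\infty_{\rm loc}(H^1_0)$, weakly in $L^2_{\rm loc}(H^2)$, with $\partial_t\widetilde u_N\rightharpoonup\partial_t\widetilde u$ weakly in $L^2_{\rm loc}(L^2)$; the Aubin--Lions lemma gives strong convergence in $L^2_{\rm loc}(H^1)$ (and in $\mathscr C^0_{\rm loc}(L^2)$, indeed in $\mathscr C^0_{\rm loc}(H^1_0)$ after interpolation together with the time continuity coming from $\widetilde u\in L^2_{\rm loc}(H^2)$, $\partial_t\widetilde u\in L^2_{\rm loc}(L^2)$, via the Lions--Magenes lemma). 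This strong convergence lets us pass to the limit in the nonlinear term $(\widetilde u_N\cdot\nabla)\widetilde u_N$, and weak lower semicontinuity of norms transfers \eqref{ineq:nrjnspol}--\eqref{ineq:ellnspol} from the Galerkin level to the limit (the energy \emph{identity}, not merely an inequality, holds in 2D because $\widetilde u$ is a legitimate test function: $\widetilde u\in L^2_{\rm loc}(H^1_0)$ with $\partial_t\widetilde u\in L^2_{\rm loc}(L^2)$ and the trilinear term vanishes). Uniqueness is the last point: given two solutions $\widetilde u_1,\widetilde u_2$ with the stated regularity, the difference $w=\widetilde u_1-\widetilde u_2$ solves a linear equation with a drift, and testing against $w$ gives $\tfrac12\frac{d}{dt}\|w\|_2^2+\|\nabla w\|_2^2=-((w\cdot\nabla)\widetilde u_2,w)\le\|w\|_4^2\|\nabla\widetilde u_2\|_2\le C\|w\|_2\|\nabla w\|_2\|\nabla\widetilde u_2\|_2\le\|\nabla w\|_2^2+C\|\nabla\widetilde u_2\|_2^2\|w\|_2^2$, and since $\|\nabla\widetilde u_2\|_2^2\in L^1_{\rm loc}$, Gronwall forces $w\equiv 0$. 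The main obstacle is Step~(b): getting the $H^2$ estimate to be \emph{global in time} with a constant depending only on $T$ and the data norms requires the uniform (in time) Gronwall lemma fed by the already-known $L^2_tH^1_x$ bound, and it is crucial there that the elliptic regularity $\|v\|_{H^2(\Omega)}\lesssim\|Av\|_{L^2(\Omega)}$ genuinely holds on the rectangle despite the corners — this is precisely what the results quoted from \cite{frosties} provide, and without it the absorption of the trilinear term into $\|A\widetilde u_N\|_2^2$ would not be available.
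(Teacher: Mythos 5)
Your proof is correct and is essentially the argument the paper relies on: the paper's own proof simply cites Boyer--Fabrie (Theorem V.2.1) for the two-dimensional strong-solution theory and observes, exactly as you do, that the only place where boundary smoothness is used is the $H^2$ elliptic regularity of the Stokes operator, which on the rectangle is supplied by the convex-polygon results of the cited reference, so your Galerkin construction with the $A\widetilde u_N$ estimate is precisely what that citation contains. One small remark: in your Step (b) it is the plain Gronwall lemma, with the $L^1_t$ coefficient $\|\widetilde u_N\|_2^2\|\nabla \widetilde u_N\|_2^2$ provided by Step (a), that yields the bound down to $t=0$ in terms of $\|u_0\|_{H^1(\Omega)}$; the uniform Gronwall lemma would only give control for times bounded away from $0$, which is the mechanism behind the paper's Theorem~\ref{thm:parabo} (data merely in $L^2$) rather than behind this statement.
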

\begin{proof}[Proof of Theorem~\ref{th:exNSpol}]
 Theorem V.2.1 (p.370) of \cite{boyer-fabrie}  gives exactly the above result (even though \eqref{ineq:ellnspol} is not explicitly written), but therein the domain $\Omega$ is assumed to be $\mathscr{C}^{1,1}$. Reading carefully the proof of \cite{boyer-fabrie}, this regularity assumption is mandatory only to ensure elliptic regularity for the Stokes operator. Elliptic regularity for the Stokes operator in a convex polygon is established in \cite{frosties}. 
\end{proof}
In the same way, using again \cite{frosties}, adapting the proof of Theorem V.2.12 (p.390) of \cite{boyer-fabrie} we have  the following regularization over  time for the Leray solutions.
\begin{thm}\label{thm:parabo}
Fix $\Omega=(-L,L)\times(-1,1)$ a rectangle, and consider $u_0\in L^2(\Omega)$ and $F\in L^2_{\textnormal{loc}}(\R_+;L^2(\Omega))$. The Leray solution $\widetilde{u}\in \mathscr{C}^0(\R_+;L^2(\Omega))\cap L^2_{\textnormal{loc}}(\R_+;H^1_0(\Omega))$ of 
\begin{align*}
\partial_t \widetilde{u} + (\widetilde{u}\cdot \nabla ) \widetilde{u} - \Delta \widetilde{u} + \nabla \widetilde{p} &= F,\\
\div\, \widetilde{u} &=0,
\end{align*}
satisfies $\widetilde{u}\in L^2_{\textnormal{loc}}(\R_+^*;H^2(\Omega))\cap \mathscr{C}^0(\R_+^*; H^1_0(\Omega))\cap \mathscr{C}^1(\R_+^* ; L^2(\Omega)) $. More precisely, the following estimate holds for any $T>0$, 
\begin{align} \label{ineq:thm:parabo}
\sup_{s\in[0,T]}s \| \widetilde{u}(s)\|_{H^1(\Omega)}^2  + \int_0^{T} s\|\widetilde{u}(s)\|_{H^2(\Omega)}^2 \, ds +\int_0^{T} s\|\partial_t \widetilde{u}(s)\|_{L^2(\Omega)}^2 \, ds
&\leq C_{T,\Omega}\Big(\|u_0\|_{L^2(\Omega)},\|F\|_{L^2((0,T)\times\Omega)}\Big),
\end{align}
where $C_{T,\Omega}(\cdot,\cdot)$ is a positive continuous function vanishing at $(0,0)$ and nondecreasing in each of its variable.
\end{thm}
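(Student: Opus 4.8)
The plan is to follow the classical strategy for parabolic smoothing of Leray solutions of the 2D Navier--Stokes equations, essentially as in Theorem~V.2.12 of \cite{boyer-fabrie}, and to check that the only point where the $\mathscr{C}^{1,1}$ regularity of the boundary is invoked there --- namely $H^2$ elliptic regularity for the stationary Stokes operator --- remains valid on the rectangle $\Omega$, which is a convex polygon, thanks to \cite{frosties}. The argument thus splits into: (a) an a priori weighted-in-time estimate obtained on smooth (Galerkin) approximations, and (b) passage to the limit together with identification of the limit as the Leray solution.

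For step (a), I would start from the basic Leray energy estimate, which already gives $\widetilde{u}\in L^\infty_{\textnormal{loc}}(\R_+;L^2(\Omega))\cap L^2_{\textnormal{loc}}(\R_+;H^1_0(\Omega))$ with a bound of the form $C_{T,\Omega}(\|u_0\|_2,\|F\|_{L^2((0,T)\times\Omega)})$. Then, working with the Galerkin scheme in the eigenbasis of the Stokes operator $A$, I would test the approximate equation against $t\,A\widetilde{u}_N$. After integration by parts this produces
\[
\frac{1}{2}\frac{d}{dt}\bigl(t\|\nabla\widetilde{u}_N\|_2^2\bigr) + t\|A\widetilde{u}_N\|_2^2 = \frac{1}{2}\|\nabla\widetilde{u}_N\|_2^2 - t\bigl((\widetilde{u}_N\cdot\nabla)\widetilde{u}_N, A\widetilde{u}_N\bigr) + t\bigl(F,A\widetilde{u}_N\bigr).
\]
The first right-hand term is integrable in time by the energy estimate; the last is absorbed by $\tfrac14 t\|A\widetilde{u}_N\|_2^2 + t\|F\|_2^2$; the crucial trilinear term is controlled in dimension two by the Ladyzhenskaya inequality $\|w\|_{L^4}\le C\|w\|_2^{1/2}\|\nabla w\|_2^{1/2}$ together with $\|\nabla w\|_{L^4}\le C\|\nabla w\|_2^{1/2}\|Aw\|_2^{1/2}$ (legitimate once $H^2$ Stokes regularity on $\Omega$ is available), yielding $|((\widetilde{u}_N\cdot\nabla)\widetilde{u}_N,A\widetilde{u}_N)|\le \tfrac14\|A\widetilde{u}_N\|_2^2 + C\|\widetilde{u}_N\|_2^2\|\nabla\widetilde{u}_N\|_2^4$. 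Inserting this and using that $s\mapsto\|\nabla\widetilde{u}_N(s)\|_2^2$ is integrable, a Gronwall argument (with $\|\nabla\widetilde{u}_N\|_2^2$ playing the role of the integrable coefficient) gives, uniformly in $N$,
\[
\sup_{s\in[0,T]} s\|\widetilde{u}_N(s)\|_{H^1}^2 + \int_0^T s\|\widetilde{u}_N(s)\|_{H^2}^2\,ds \le C_{T,\Omega}\bigl(\|u_0\|_2,\|F\|_{L^2((0,T)\times\Omega)}\bigr),
\]
where $H^2$ Stokes regularity on $\Omega$ converts $\|A\widetilde{u}_N\|_2$ into $\|\widetilde{u}_N\|_{H^2}$. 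Reading the equation $\partial_t\widetilde{u}_N = -A\widetilde{u}_N - P_N((\widetilde{u}_N\cdot\nabla)\widetilde{u}_N) + P_N F$ and using the bounds just obtained together with the 2D product estimates then gives the matching weighted bound for $\int_0^T s\|\partial_t\widetilde{u}_N\|_2^2\,ds$.

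For step (b), weak and weak-$\star$ compactness in the spaces appearing in \eqref{ineq:thm:parabo}, together with the Aubin--Lions lemma (providing the strong $L^2_{\textnormal{loc}}$ convergence needed to pass to the limit in the quadratic term), produce a limit $\widetilde{u}$ satisfying \eqref{ineq:thm:parabo} by lower semicontinuity; since in two dimensions the Leray solution is unique, this limit coincides with the one in the statement, and the continuity statements $\widetilde{u}\in\mathscr{C}^0(\R_+^*;H^1_0(\Omega))\cap\mathscr{C}^1(\R_+^*;L^2(\Omega))$ follow from the weighted $H^2$ and $\partial_t$ bounds by a standard argument on each interval $[\sigma,T]$ with $\sigma>0$. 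The only structural input beyond the smooth-domain case is the estimate $\|w\|_{H^2}\le C_\Omega\|f\|_{L^2}$ for the Stokes system on the convex polygon $\Omega$, which is precisely what \cite{frosties} provides.

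The main obstacle I anticipate is not conceptual but organizational: making sure that, despite the singular time weight $t$, all constants can be packaged into a single continuous function $C_{T,\Omega}$ of $(\|u_0\|_{L^2},\|F\|_{L^2((0,T)\times\Omega)})$ that vanishes at the origin and is nondecreasing --- in particular that the Gronwall exponential, which involves $\int_0^T\|\nabla\widetilde{u}(s)\|_2^2\,ds$, is itself controlled by such a function via the basic energy estimate. The other delicate point is justifying $A\widetilde{u}$ as a test function and the associated integrations by parts in the polygonal domain, which is licit exactly because the domain of the Stokes operator on $\Omega$ is contained in $H^2(\Omega)\cap H^1_0(\Omega)$ by \cite{frosties}.
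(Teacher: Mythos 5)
Your proposal is correct and is essentially the paper's own argument: the paper proves this theorem simply by adapting Theorem~V.2.12 of \cite{boyer-fabrie}, observing that the only boundary-regularity input is $H^2$ elliptic regularity for the Stokes operator, which holds on the convex polygon $\Omega$ by \cite{frosties}. You have merely unfolded the steps of that classical proof (Galerkin with the weight $t$, the 2D trilinear estimate, Gronwall, Aubin--Lions and uniqueness of the 2D Leray solution), which matches the intended route.
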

A time translation argument leads then to the following corollary.
\begin{cor}\label{coro:parabo}
Fix $T>0$. Under the assumptions of Theorem~\ref{thm:parabo}, the solution $\widetilde{u}$ satisfies the following estimate for any $a\geq \frac{T}{2}$ and $b=a+T$
\begin{align} \label{ineq:coro:parabo}
\sup_{s\in[a,b]} \| \widetilde{u}(s)\|_{H^1(\Omega)}^2  + \int_{a}^{b} \|\widetilde{u}(s)\|_{H^2(\Omega)}^2 \, ds
&\leq C_{T,\Omega}\Big(\|\widetilde{u}(a-\tfrac{T}{2})\|_{L^2(\Omega)},\|F\|_{L^2((a-\tfrac{T}{2},b)\times\Omega)}\Big),
\end{align}
where $C_{T,\Omega}(\cdot,\cdot)$ is a positive continuous function vanishing at $(0,0)$ and nondecreasing with respect to each of its variable.
\end{cor}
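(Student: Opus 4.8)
The plan is to obtain the estimate by a straightforward time-translation of Theorem~\ref{thm:parabo}, exploiting that the hypothesis $a \geq T/2$ guarantees that $a - T/2 \geq 0$ still lies in the domain of the solution.

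First I would fix $a \geq T/2$, set $b = a+T$, and record that $\widetilde u(a - \tfrac{T}{2})$ is a well-defined element of $L^2(\Omega)$, since the Leray solution belongs to $\mathscr{C}^0(\R_+;L^2(\Omega))$. I would then introduce the shifted pair $w(s,\cdot) := \widetilde u(s + a - \tfrac{T}{2}, \cdot)$ and $G(s,\cdot) := F(s + a - \tfrac{T}{2}, \cdot)$ for $s \in [0,\tfrac{3T}{2}]$, and check — by a change of variable in the weak formulation — that $w$ is the Leray solution on $(0,\tfrac{3T}{2})$ of the Navier--Stokes system with forcing $G \in L^2((0,\tfrac{3T}{2})\times\Omega)$ and initial datum $w(0) = \widetilde u(a - \tfrac{T}{2})$.

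Next I would apply Theorem~\ref{thm:parabo} to $w$ with time horizon $\tfrac{3T}{2}$ in place of $T$, which gives
$$\sup_{s\in[0,3T/2]} s\,\|w(s)\|_{H^1(\Omega)}^2 + \int_0^{3T/2} s\,\|w(s)\|_{H^2(\Omega)}^2\,ds \leq C_{3T/2,\Omega}\big(\|w(0)\|_{L^2(\Omega)},\|G\|_{L^2((0,3T/2)\times\Omega)}\big).$$
On the sub-interval $s\in[\tfrac{T}{2},\tfrac{3T}{2}]$ the weight $s$ is bounded below by $T/2$, so I can discard it at the cost of a factor $2/T$: this controls $\sup_{s\in[T/2,3T/2]}\|w(s)\|_{H^1(\Omega)}^2$ and $\int_{T/2}^{3T/2}\|w(s)\|_{H^2(\Omega)}^2\,ds$ by the same right-hand side, the constant $2/T$ being absorbed and $C_{3T/2,\Omega}$ renamed $C_{T,\Omega}$ since $3T/2$ is a function of $T$. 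Finally I would undo the translation: $s\in[\tfrac{T}{2},\tfrac{3T}{2}]$ corresponds to the original time $t = s + a - \tfrac{T}{2}\in[a,b]$, with $w(0) = \widetilde u(a-\tfrac{T}{2})$ and $\|G\|_{L^2((0,3T/2)\times\Omega)} = \|F\|_{L^2((a-T/2,b)\times\Omega)}$, which yields exactly \eqref{ineq:coro:parabo}; the asserted qualitative properties of $C_{T,\Omega}$ (positivity, continuity, vanishing at the origin, monotonicity in each argument) are inherited verbatim from those of the constant in Theorem~\ref{thm:parabo}.

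There is no serious obstacle here. The only point needing (minor) care is the verification that the time-translate of a Leray solution is again a Leray solution associated with the translated data — a routine change of variables in the weak formulation — together with the observation that the continuity $\widetilde u \in \mathscr{C}^0(\R_+;L^2(\Omega))$ is precisely what makes the translated initial value $\widetilde u(a - \tfrac{T}{2})$ meaningful, and this is where the hypothesis $a \geq T/2$ enters.
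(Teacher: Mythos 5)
Your argument is correct and is essentially the paper's own proof: the paper also translates time by $a-\tfrac{T}{2}$, applies Theorem~\ref{thm:parabo} to the shifted Leray solution with initial datum $\widetilde u(a-\tfrac{T}{2})$ over the interval of length $\tfrac{3T}{2}$, and then drops the weight $s-c\geq \tfrac{T}{2}$ on $[a,b]$ at the cost of a factor $2/T$ absorbed into the constant. The points you flag as needing care (translation invariance of the Leray formulation and the continuity $\widetilde u\in\mathscr{C}^0(\R_+;L^2(\Omega))$ giving meaning to the shifted initial value) are exactly the ones the paper treats as routine.
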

\begin{proof}[Proof of Corollary~\ref{coro:parabo}]
Note that $s\mapsto \widetilde{u}(s+a-\frac{T}{2})$ is a Leray solution with initial condition $\widetilde{u}(a-\frac{T}{2})\in L^2(\Omega)$. Theorem~\ref{thm:parabo} hence implies, denoting $c:=a-\frac{T}{2}$
\begin{align}
\sup_{s\in[c,b]}(s-c) \| \widetilde{u}(s)\|_{H^1(\Omega)}^2  + \int_{c}^{b} (s-c)\|\widetilde{u}(s)\|_{H^2(\Omega)}^2 \, ds
&\leq C_{T,\Omega}\Big(\|\widetilde{u}(c)\|_{L^2(\Omega)},\|F\|_{L^2((c,b)\times\Omega)}\Big),
\end{align}
from which we infer \eqref{ineq:coro:parabo}.
\end{proof}
\subsection{Br\'ezis-Gallou\"et inequality}
In \cite{bg} the following inequality is proved in the case of a smooth domain $\Omega$. Actually, the proof (see Lemma 2 of \cite{bg}) uses the regularity of $\Omega$ only through the extension operator $H^1(\Omega)\rightarrow  H^1(\R^2)$ so that we infer the following Lemma in our setting 
\begin{lem}[Br\'ezis-Gallou\"et]
\label{lem:bg}
Fix $\Omega=(-L,L)\times(-1,1)$ a rectangle. There exists $C_\Omega >0$ such that for any $u\in H^2(\Omega)$, there holds
\begin{align}
\label{ineq:bg} \|u\|_{L^\infty(\Omega)} \leq C_\Omega \|u\|_{H^1(\Omega)} \left[
1+\sqrt{\log\left(1+\frac{\|u\|_{H^2(\Omega)}}{\|u\|_{H^1(\Omega)}}\right)}\,\right].
\end{align}
\end{lem}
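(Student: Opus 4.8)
The plan is to reduce the estimate to the analogous inequality on the whole plane $\R^2$ and then prove the latter by the classical Fourier-splitting argument of \cite{bg}. For the reduction, fix a bounded linear extension operator $E$ that is \emph{simultaneously} continuous from $H^1(\Omega)$ to $H^1(\R^2)$ and from $H^2(\Omega)$ to $H^2(\R^2)$. Such an operator exists for the rectangle $\Omega$: since $\Omega$ is a bounded Lipschitz (indeed convex polygonal) domain, Stein's extension theorem provides one, and alternatively it can be built explicitly by iterated reflection across the four sides followed by a cut-off, the $H^2$ bound surviving reflection across each flat side. Because restriction only decreases Sobolev norms, one has $\|u\|_{H^1(\Omega)}\le\|Eu\|_{H^1(\R^2)}\le C_{1}\|u\|_{H^1(\Omega)}$ and $\|Eu\|_{H^2(\R^2)}\le C_{2}\|u\|_{H^2(\Omega)}$ (with $C_1,C_2\ge1$ without loss of generality), and $\|u\|_{L^\infty(\Omega)}\le\|Eu\|_{L^\infty(\R^2)}$. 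So it is enough to prove
\begin{equation*}
\|w\|_{L^\infty(\R^2)}\le C\,\|w\|_{H^1(\R^2)}\Big[1+\sqrt{\log\bigl(1+\tfrac{\|w\|_{H^2(\R^2)}}{\|w\|_{H^1(\R^2)}}\bigr)}\Big]
\end{equation*}
for all $w\in H^2(\R^2)$, then take $w=Eu$ and absorb the harmless constants into $C_\Omega$, using $\log(1+C_2x)\le\log C_2+\log(1+x)$ and $1+\sqrt a+\sqrt b\le(1+\sqrt a)(1+\sqrt b)$.

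For the inequality on $\R^2$, start from $\|w\|_{L^\infty(\R^2)}\le C\,\|\widehat w\|_{L^1(\R^2)}$ (Fourier inversion) and split the frequency integral at a radius $\rho>0$ to be chosen. By Cauchy--Schwarz in dimension two,
\begin{equation*}
\int_{|\xi|\le\rho}|\widehat w(\xi)|\,d\xi\le\Big(\int_{|\xi|\le\rho}\frac{d\xi}{1+|\xi|^2}\Big)^{1/2}\|w\|_{H^1(\R^2)}=\bigl(\pi\log(1+\rho^2)\bigr)^{1/2}\|w\|_{H^1(\R^2)},
\end{equation*}
while
\begin{equation*}
\int_{|\xi|>\rho}|\widehat w(\xi)|\,d\xi\le\Big(\int_{|\xi|>\rho}\frac{d\xi}{(1+|\xi|^2)^2}\Big)^{1/2}\|w\|_{H^2(\R^2)}\le\frac{\sqrt\pi}{\rho}\,\|w\|_{H^2(\R^2)}.
\end{equation*}
Since $\|w\|_{H^2(\R^2)}\ge\|w\|_{H^1(\R^2)}$, the choice $\rho:=\|w\|_{H^2(\R^2)}/\|w\|_{H^1(\R^2)}\ge1$ turns the second term into $\sqrt\pi\,\|w\|_{H^1(\R^2)}$, and the bound $\log(1+\rho^2)\le2\log(1+\rho)$ (valid for $\rho\ge0$) yields exactly the claimed inequality on $\R^2$.

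No serious obstacle is expected: this is entirely classical (it is precisely Lemma~2 of \cite{bg}). The only point deserving a word is the existence of the extension operator for the rectangle, which has corners; as noted in the paper, this is where the regularity of $\Omega$ enters, and it is supplied by Stein's theorem for Lipschitz domains or by the elementary reflection construction above.
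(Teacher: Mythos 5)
Your proof is correct: the extension step is sound (a rectangle is a Lipschitz, indeed product, domain, so Stein's theorem or iterated higher-order reflection in the two coordinate directions gives an operator bounded simultaneously on $H^1$ and $H^2$), the Fourier-splitting computation on $\R^2$ is the standard Br\'ezis--Gallou\"et argument with the right choice $\rho=\|w\|_{H^2}/\|w\|_{H^1}$, and the bookkeeping $\log(1+C_2x)\leq\log C_2+\log(1+x)$, $1+\sqrt{a+b}\leq(1+\sqrt a)(1+\sqrt b)$ transfers the estimate back to $\Omega$ with a constant depending only on the extension norms. The paper, by contrast, does not reprove anything: it simply invokes Lemma 2 of \cite{bg} and observes that the only place where the smoothness of the domain enters that proof is the existence of an extension operator $H^1(\Omega)\to H^1(\R^2)$, which the rectangle possesses. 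So the underlying mathematics is the same classical argument, but your reduction is more demanding than the paper's remark in one respect: you need the extension to be bounded in $H^2$ as well as $H^1$ (since you feed $\|Eu\|_{H^2}$ into the whole-space inequality), whereas the paper's reading of \cite{bg} requires only the $H^1$ extension property. For the rectangle this costs nothing, since the $H^1\cap H^2$ extension exists, and what you gain is a fully self-contained proof with explicit constants rather than an appeal to the structure of someone else's proof; if one wanted the minimal hypothesis on the domain, the paper's route is the leaner one.
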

\subsection{Boundary value problem for the kinetic equation} 
The following result is a straightforward consequence of Theorem VI.1.6 (p.423) of  \cite{boyer-fabrie}, which is an adaptation of the celebrated theory of DiPerna-Lions \cite{DPL} to the case of transport equations set in domains with boundary: note that even though the phase space domain $\Omega\times \R^2$ is not bounded, the proof of \cite{boyer-fabrie} applies \emph{verbatim}. 

\begin{thm} \label{thm:vla}
Fix $\chi \in \mathscr{C}^\infty(\R)$ such that $|\chi(z)|\leq |z|$ and $\chi'\in L^\infty(\R)$. 
Take $u \in L^1_{\textnormal{loc}}(\R_+;W^{1,1}(\Omega))$, $\psi\in L^\infty(\R_+\times\Gamma^l )$ (with compact support in $v$) and $f_0\in L^1\cap L^\infty(\Omega\times\R^2)$. 
Consider the kinetic equation
\begin{align*}
\partial_t f + v \cdot \nabla_x f + \div_v(\chi(u-v)f)=0,
\end{align*}
with boundary conditions \eqref{CBf1}--\eqref{CBf2} and initial data $f_0$, solutions being understood in the sense of Definition~\ref{def:weak}.
Then we have 
\begin{itemize}
\item[$\bullet$] \emph{Well-posedness:} There exists a unique $f\in L^\infty_{\textnormal{loc}}(\R_+;L^1 \cap L^\infty(\Omega\times\R^2))$ solution of the previous Cauchy boundary value problem, satisfying furthermore $f\in\mathscr{C}^0(\R_+;L^p_{\textnormal{loc}}(\overline{\Omega}\times\R^2))$ for all $p<\infty$.
\item[$\bullet$] \emph{Stability:} If 
$$(u_n) \rightarrow u \text{ in  } L_{\textnormal{loc}}^1(\R_+;L^1(\Omega)),$$ 
the corresponding sequence $(f_n)$ satisfies for all $p<\infty$,
$$
(f_n) \rightarrow f \text{ in } L^p_{\textnormal{loc}}(\R_+;L^p_{\textnormal{loc}}(\overline{\Omega}\times\R^2)),$$ where $f$ is the solution corresponding  to $u$. 
\item[$\bullet$] \emph{Maximum principle:} We have for all $T>0$, 
$$\sup_{0\leq t\leq T} \|f(t)\|_{\infty} \leq e^{2T}(\|f_0\|_\infty  + \|\psi\|_\infty).$$ 
Furthermore if $f_0\geq 0$ and $\psi\geq 0$ then $f\geq 0$.
\item[$\bullet$] \emph{Moments estimate:} One has 
\begin{align*}
M_0 f (t) = M_0 f_0 - \int_0^t \int_{\Gamma^l} \psi(s,x,v)v\cdot n(x) \,dv\,dx\,ds,
\end{align*}
and furthermore if  $M_\alpha f_0<\infty$ for some $\alpha>0$, then for all $t\in \R_+$  
\begin{align*}
M_\alpha f(t) &= \alpha \int_0^t \int_{\Omega\times\R^2} |v|^{\alpha-2} v\cdot \chi(u-v) f  \, dv \, dx \, ds + M_\alpha f_0 - \int_0^t \int_{\Gamma^l} \psi(s,x,v)|v|^\alpha v\cdot n(x)  \, dv \, dx \, ds.
\end{align*}
\end{itemize}
\end{thm}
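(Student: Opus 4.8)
The statement is asserted to be a direct consequence of the renormalised-solution theory for transport equations in a domain with boundary (Boyer--Fabrie, Theorem~VI.1.6, which adapts DiPerna--Lions \cite{DPL} and the trace theory for kinetic equations), so the plan is to explain why that framework applies \emph{verbatim} in the present setting (unbounded in velocity, polygonal in space, source-free drift of Sobolev regularity) and then to read off the maximum principle and the moment identities. First I would recast the kinetic equation as a linear transport equation $\partial_t f + \div_{x,v}(b\,f)=0$ on $\R_+\times(\Omega\times\R^2)$ with phase-space vector field $b(t,x,v):=(v,\chi(u(t,x)-v))$, and verify the structural hypotheses: since $u\in L^1_{\textnormal{loc}}(\R_+;W^{1,1}(\Omega))$ and $\chi\in\mathscr{C}^\infty$ with $\chi'\in L^\infty$, one has $b\in L^1_{\textnormal{loc}}(\R_+;W^{1,1}_{\textnormal{loc}})$ with divergence $\div_{x,v}b=-\chi'(u_1-v_1)-\chi'(u_2-v_2)$ bounded in $L^\infty$ (by $2\|\chi'\|_\infty$, which is $\leq 2$ for the cut-offs used in the paper), while $|b|\leq |u|+2|v|$ has at most linear growth. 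The incoming part of the phase-space boundary is precisely $\Sigma^-=\Gamma^l\cup\Gamma^r\cup\Gamma^u\cup\Gamma^d$ up to the Lebesgue-negligible singular set $\Sigma^s$ (the negligibility being the Sard-type argument already invoked in the body of the paper), so that the prescribed inflow $f=\psi\,\mathbf{1}_{\Gamma^l}$ on $\Sigma^-$ together with $f(0,\cdot)=f_0$ are admissible data.

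The only point not literally covered by \cite{boyer-fabrie} is that $\R^2_v$ is unbounded. I would handle this by a standard exhaustion: solve on $\Omega\times B(0,N)$ with an artificial absorbing boundary at $|v|=N$ and let $N\to\infty$; the linear-growth bound on $b$ together with a Gronwall estimate on $|V|$ along characteristics shows that mass entering through $\Gamma^l$ (where $\psi$ has compact $v$-support) or carried by data supported in a fixed ball stays in a ball of radius $C(T,N)$, and the $L^1\cap L^\infty$ assumption on $f_0$ lets one pass to the limit — equivalently, as the authors observe, all estimates in \cite{boyer-fabrie} are local in phase space, so the proof goes through unchanged. This delivers the well-posedness class $f\in L^\infty_{\textnormal{loc}}(\R_+;L^1\cap L^\infty)\cap\mathscr{C}^0(\R_+;L^p_{\textnormal{loc}})$ for $p<\infty$, and the stability statement then follows from the stability of renormalised solutions under $L^1_{\textnormal{loc}}$-convergence of the vector field, here applied with $b_n\to b$ in $L^1_{\textnormal{loc}}$ and $\div_{x,v}b_n$ uniformly bounded.

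For the maximum principle and the sign, I would renormalise: for convex $\beta$ with $\beta(0)=0$ one has $\partial_t\beta(f)+\div_{x,v}(b\,\beta(f))=(\div_{x,v}b)\big(\beta(f)-f\beta'(f)\big)$ in the renormalised sense, with inflow datum $\beta(\psi)\,\mathbf{1}_{\Gamma^l}$. Taking $\beta(s)=s_-$ and using $f_0,\psi\geq 0$ gives $f\geq 0$; taking $\beta(s)=|s|$, integrating in $(x,v)$, and noting that the inflow adds at most $\|\psi\|_\infty$ while the zero-order term is controlled by $\|(\div_{x,v}b)^-\|_\infty\leq 2$, one gets that $t\mapsto e^{-2t}\|f(t)\|_\infty$ increases only through the inflow, hence $\sup_{[0,T]}\|f(t)\|_\infty\leq e^{2T}(\|f_0\|_\infty+\|\psi\|_\infty)$ (equivalently, trace a particle either back to $t=0$ or back to its entrance through $\Gamma^l$ and use $-\div_v\chi(u-v)\leq 2$ along characteristics). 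Finally, the moment identities come from testing the renormalised equation against $|v|^\alpha$ cut off by $\zeta(v/N)^2$ with $\zeta$ a smooth plateau function: one first runs a Gronwall argument on $\int f\,(1+|v|^\alpha)\zeta(v/N)^2$ to obtain, uniformly in $N$, that $M_\alpha f\in L^\infty_{\textnormal{loc}}(\R_+)$ whenever $M_\alpha f_0<\infty$, which then legitimises letting $N\to\infty$ in the identity; the surviving boundary contribution is the one through $\Sigma^-$, which reduces to the stated integral over $\Gamma^l$ because $f$ vanishes on $\Gamma^r\cup\Gamma^u\cup\Gamma^d$, the outgoing flux through $\Sigma^+$ being treated exactly as in \cite{boyer-fabrie}.

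The main obstacle, in my view, is not any single computation but the bookkeeping required to combine three mild generalisations of the textbook statement simultaneously — the non-smoothness (corners) of $\Omega$, absorbed into the negligibility of $\Sigma^s$; the unboundedness of $\R^2_v$, absorbed into the exhaustion/linear-growth argument; and the merely $W^{1,1}$-regular drift, handled by renormalisation — while keeping the renormalisation identities \emph{exact}. Within that, the most delicate technical point is the passage to the limit in the velocity cut-off for the moment identities under the sole assumption $M_\alpha f_0<\infty$ (rather than compact $v$-support of $f_0$), which is exactly why the preliminary uniform-in-$N$ Gronwall estimate on the truncated moment is needed before removing the truncation.
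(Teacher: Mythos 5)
Your proposal follows exactly the paper's own route: the paper proves Theorem~\ref{thm:vla} simply by invoking Theorem~VI.1.6 of \cite{boyer-fabrie} (the DiPerna--Lions theory adapted to domains with boundary) and remarking that the unboundedness of the phase space $\Omega\times\R^2$ causes no difficulty, which is precisely the framework you verify. Your additional details (the phase-space field $b=(v,\chi(u-v))$ with bounded divergence, the velocity exhaustion, renormalisation for the maximum principle and sign, and the cut-off argument for the moment identities) are correct elaborations of what the paper leaves implicit, so there is no genuine divergence or gap.
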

\subsection{Interpolation}
We now provide some technical results regarding the interpolation of moments in velocity for solutions of Vlasov equations.
\begin{lem} \label{interpol}
Let $f:\R\times \Omega \times\R^2\rightarrow\R_+$. Recall the notation
\begin{align*}
m_\alpha f (s,x) &:= \int_{\R^2} f(s,x,v)|v|^\alpha  \, dv,\\
M_\alpha f(s) &:= \int_{\Omega} m_\alpha f(s,x) \, dx.
\end{align*}
One has for all $\beta \leq \gamma $,
\begin{align*}
\| m_\beta f(t) \|_{\frac{\gamma+2}{\beta+2}} \leq C_{\beta,\gamma} \|f(t)\|_\infty^{\frac{\gamma-\beta}{\gamma+2}}M_\gamma f(t)^{\frac{2+\beta}{\gamma+2}}.
\end{align*}
\end{lem}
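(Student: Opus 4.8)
\textbf{Proof plan for Lemma~\ref{interpol}.}

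The statement is a classical velocity-moment interpolation estimate, so the plan is to prove it by a pointwise-in-$(t,x)$ splitting of the velocity integral, followed by an optimization in the truncation radius and a final application of H\"older's inequality in $x$. I will drop the fixed time $t$ from the notation throughout.

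First, I would work at a fixed point $x\in\Omega$. For a radius $\rho>0$ to be chosen, split
\[
m_\beta f(x)=\int_{|v|\le \rho} f(x,v)|v|^\beta\,dv+\int_{|v|>\rho} f(x,v)|v|^\beta\,dv.
\]
On the first piece, bound $f(x,v)\le\|f\|_\infty$ and integrate $|v|^\beta$ over the disc $\{|v|\le\rho\}$, which gives a term of order $\|f\|_\infty\,\rho^{\beta+2}$ (here I use $\beta+2>0$). On the second piece, write $|v|^\beta=|v|^\gamma|v|^{\beta-\gamma}\le \rho^{\beta-\gamma}|v|^\gamma$ since $\beta\le\gamma$ and $|v|>\rho$, so that this piece is bounded by $\rho^{\beta-\gamma}\,m_\gamma f(x)$. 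Thus
\[
m_\beta f(x)\le C_\beta\,\|f\|_\infty\,\rho^{\beta+2}+\rho^{\beta-\gamma}\,m_\gamma f(x).
\]
Now optimize in $\rho$: the right-hand side is minimized (up to a constant depending only on $\beta,\gamma$) by choosing $\rho$ so that the two terms balance, i.e. $\rho^{\gamma+2}\sim m_\gamma f(x)/\|f\|_\infty$ (if $m_\gamma f(x)=0$ the bound is trivial, and if $\|f\|_\infty=0$ then $f\equiv0$). Substituting this choice yields the pointwise estimate
\[
m_\beta f(x)\le C_{\beta,\gamma}\,\|f\|_\infty^{\frac{\gamma-\beta}{\gamma+2}}\,\bigl(m_\gamma f(x)\bigr)^{\frac{\beta+2}{\gamma+2}}.
\]

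Finally, I would integrate this inequality over $\Omega$ after raising both sides to the power $q:=\frac{\gamma+2}{\beta+2}$. Since $q\cdot\frac{\beta+2}{\gamma+2}=1$, one gets
\[
\int_\Omega \bigl(m_\beta f(x)\bigr)^{q}\,dx\le C_{\beta,\gamma}^{q}\,\|f\|_\infty^{q\frac{\gamma-\beta}{\gamma+2}}\int_\Omega m_\gamma f(x)\,dx=C_{\beta,\gamma}^{q}\,\|f\|_\infty^{q\frac{\gamma-\beta}{\gamma+2}}\,M_\gamma f,
\]
and taking the $q$-th root gives exactly
\[
\|m_\beta f\|_{\frac{\gamma+2}{\beta+2}}\le C_{\beta,\gamma}\,\|f\|_\infty^{\frac{\gamma-\beta}{\gamma+2}}\,M_\gamma f^{\frac{\beta+2}{\gamma+2}}.
\]
There is essentially no serious obstacle here; the only points requiring a little care are the degenerate cases ($m_\gamma f(x)=0$ or $f\equiv0$), the sign conditions $\beta+2>0$ and $\gamma-\beta\ge0$ needed for the two truncated integrals to behave as claimed, and bookkeeping the dimensional constant coming from the area of the disc in $\R^2$. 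The exponent arithmetic $q\cdot\frac{\beta+2}{\gamma+2}=1$ is what makes the final H\"older/integration step collapse cleanly onto $M_\gamma f$.
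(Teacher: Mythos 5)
Your proof is correct: the splitting of the velocity integral at a radius $\rho$, the optimization $\rho^{\gamma+2}\sim m_\gamma f(x)/\|f\|_\infty$, and the final integration after raising to the power $\tfrac{\gamma+2}{\beta+2}$ give exactly the stated bound, and the exponent bookkeeping checks out. The paper states this lemma without proof (it is a classical moment-interpolation estimate), and your argument is precisely the standard one that is implicitly intended, so there is nothing to add beyond noting that the last step is a direct integration rather than a genuine use of H\"older's inequality.
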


\begin{cor}\label{coro:vla}
Under the assumptions of Theorem~\ref{thm:vla} with $\chi(z)=z$, if furthermore we assume that $u\in L^2_{\textnormal{loc}}(\R_+;L^6(\Omega))$, $\textnormal{Supp}_v(\psi) \subset B(0,R)$ and $M_4 f_0 < \infty$ then the following estimate holds on any interval $[0,T]$ 
\begin{align*}
M_4f(t) \leq C_{\Omega,T,R}(\|\psi\|_\infty, M_4 f_0,\|f_0\|_\infty) D_T(\|u\|_{L^2(0,T;L^6(\Omega))}),
\end{align*}
where $C_{\Omega,T,R}$ and $D_T$ are two continuous nonnegative nondecreasing functions with respect to each of their arguments, the first one furthermore vanishing at $0$.
\end{cor}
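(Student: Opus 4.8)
The plan is to use the moments estimate from Theorem~\ref{thm:vla} with $\alpha = 4$ and $\chi(z) = z$, and close a Gronwall-type inequality using the interpolation Lemma~\ref{interpol}. First I would write, from the moments identity of Theorem~\ref{thm:vla},
\begin{equation*}
M_4 f(t) = 4\int_0^t \int_{\Omega\times\R^2} |v|^2 v\cdot(u-v) f \, dv\, dx\, ds + M_4 f_0 - \int_0^t \int_{\Gamma^l} \psi(s,x,v)|v|^4 v\cdot n(x)\, dv\, dx\, ds.
\end{equation*}
The boundary term is controlled by $C_{\Omega,T,R}(\|\psi\|_\infty)$ since $\Supp_v \psi \subset B(0,R)$ and $|v\cdot n(x)|\le R$ there; it is actually nonpositive ($v\cdot n(x) < 0$ on $\Gamma^l$) but it is harmless to bound it crudely. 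The dangerous part is the volume term: expanding $|v|^2 v\cdot(u-v) = |v|^2 v\cdot u - |v|^4$, the $-|v|^4$ piece gives $-4\int_0^t M_4 f(s)\, ds \le 0$ and can be discarded, so one is left to estimate $4\int_0^t\int_\Omega (m_3 f)\,|u|\, dx\, ds$ (after Cauchy–Schwarz in $v$, $|\int |v|^2 v f\, dv| \le m_3 f$).

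Next I would bound $\int_\Omega (m_3 f)|u|\, dx$ using Hölder with exponents tuned to the Sobolev embedding $H^1(\Omega)\hookrightarrow L^6(\Omega)$, hence $L^2_{\mathrm{loc}}(\R_+;L^6(\Omega))$ for $u$. Writing $\tfrac{1}{6} + \tfrac{5}{6} = 1$, we get $\int_\Omega (m_3 f)|u|\, dx \le \|u\|_{L^6(\Omega)} \|m_3 f\|_{L^{6/5}(\Omega)}$. By Lemma~\ref{interpol} with $\beta = 3$, $\gamma = 4$ (noting $\tfrac{\gamma+2}{\beta+2} = \tfrac{6}{5}$), we have $\|m_3 f(s)\|_{6/5} \le C \|f(s)\|_\infty^{1/6} M_4 f(s)^{5/6}$; and by the maximum principle $\|f(s)\|_\infty \le e^{2T}(\|f_0\|_\infty + \|\psi\|_\infty)$. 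Thus
\begin{equation*}
M_4 f(t) \le A + B \int_0^t \|u(s)\|_{L^6(\Omega)}\, M_4 f(s)^{5/6}\, ds,
\end{equation*}
with $A = M_4 f_0 + C_{\Omega,T,R}(\|\psi\|_\infty)$ and $B = B(\Omega,T,R,\|\psi\|_\infty,\|f_0\|_\infty)$ nondecreasing and vanishing at $0$ in the appropriate arguments. This is a nonlinear (sublinear, since $5/6 < 1$) integral inequality with an $L^1_t(L^6_x)$-type forcing — but $u\in L^2_t L^6_x$ only, so I would first apply Cauchy–Schwarz in time, $\int_0^t \|u(s)\|_6 g(s)\, ds \le \|u\|_{L^2(0,T;L^6)} (\int_0^t g(s)^2\, ds)^{1/2}$, leading after setting $y(t) = M_4 f(t)$ to $y(t) \le A + B\|u\|_{L^2_t L^6_x}(\int_0^t y(s)^{5/3}\, ds)^{1/2}$.

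Finally I would resolve this inequality. Since $5/3 > 1$ the naive Bihari/Gronwall argument does not immediately give a global-in-time bound for free, but here $T$ is fixed and we only need an estimate on $[0,T]$ with a constant depending on $T$ and on $\|u\|_{L^2(0,T;L^6)}$. One clean route: set $Y(t) = \int_0^t y(s)^{5/3}\, ds$, so $Y' = y^{5/3} \le (A + B\|u\|_{L^2_t L^6_x} Y^{1/2})^{5/3}$, a scalar autonomous-type ODE inequality whose solution exists up to a time depending only on $A$, $B\|u\|_{L^2_t L^6_x}$ — and one must check this existence time is not smaller than $T$; this is where a little care is needed. The cleaner alternative, and the one I would actually carry out, is to bootstrap: Corollary~\ref{coro:vla}'s hypotheses already guarantee (via the more elementary $M_2$ or $M_0$ estimates, or a truncation/approximation argument in the proof of Theorem~\ref{thm:weak}) that $M_4 f \in L^\infty_{\mathrm{loc}}$ qualitatively, so $y$ is a priori bounded on $[0,T]$, say $y \le N(T)$; then $y(s)^{5/3} \le N(T)^{2/3} y(s)$ and the inequality linearizes to $y(t) \le A + B' (\int_0^t \|u(s)\|_6^2\, ds)^{1/2}(\int_0^t y(s)\, ds)^{1/2}$ with $B'$ depending on $N(T)$ — wait, this reintroduces $N(T)$, which we do not want in the final constant.

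The honest fix, which I expect to be the main obstacle, is to avoid the bootstrap dependence on $N(T)$ by instead using a direct Bihari-type estimate for the sublinear exponent $5/6$ \emph{before} applying Cauchy–Schwarz in time: keep $y(t) \le A + B\int_0^t \|u(s)\|_6 y(s)^{5/6}\, ds$ and set $z(t) = (A^{1/6} + \tfrac{B}{6}\int_0^t \|u(s)\|_6\, ds)^6$, so that $z(t) \ge y(t)$ by the standard comparison for $y' \le B\|u\| y^{5/6}$ (separating variables: $\tfrac{d}{dt}(6 y^{1/6}) \le B\|u\|$). Then
\begin{equation*}
M_4 f(t) = y(t) \le \Bigl(A^{1/6} + \tfrac{B}{6}\int_0^t \|u(s)\|_{L^6(\Omega)}\, ds\Bigr)^6 \le \Bigl(A^{1/6} + \tfrac{B}{6}\sqrt{T}\,\|u\|_{L^2(0,T;L^6(\Omega))}\Bigr)^6,
\end{equation*}
which is exactly of the desired form $C_{\Omega,T,R}(\|\psi\|_\infty, M_4 f_0, \|f_0\|_\infty)\, D_T(\|u\|_{L^2(0,T;L^6(\Omega))})$ after expanding the binomial: the parts of the binomial expansion containing only $A^{1/6}$ factors are absorbed into $C_{\Omega,T,R}$ (which vanishes at $0$ since $A$ does up to the $M_4 f_0$ term — one should separate $A = M_4 f_0 + A_0$ with $A_0$ vanishing at $0$ and note $(a+b)^6 \lesssim a^6 + b^6$), and the dependence on $\|u\|_{L^2_t L^6_x}$ is polynomial, hence continuous and nondecreasing. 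One technical point to double-check is that the separation-of-variables comparison is valid when $y$ may vanish; this is handled by the usual $\epsilon$-regularization $y \rightsquigarrow y + \epsilon$ and passing to the limit, or by noting $y$ is continuous and working on maximal intervals where $y > 0$. This finishes the proof.
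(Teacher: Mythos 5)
Your proof is correct, and up to the interpolation step it coincides with the paper's: both start from the $\alpha=4$ moment identity of Theorem~\ref{thm:vla}, bound the boundary flux by $C_{\Omega,T,R}(\|\psi\|_\infty)$ using the compact velocity support of $\psi$ (note in passing that this term is in fact \emph{nonnegative}, since $v\cdot n<0$ on $\Gamma^l$ and it carries a minus sign; your parenthetical about its sign is backwards, but since you only use the crude absolute-value bound nothing is affected), then apply H\"older with exponents $6$ and $6/5$, Lemma~\ref{interpol} with $\beta=3$, $\gamma=4$, and the maximum principle. Where you genuinely diverge is in closing the resulting integral inequality. The paper applies Cauchy--Schwarz in time first, squares the inequality, uses H\"older to reach the exponent $5/6$ on $\int_0^t M_4f(s)^2\,ds$, linearizes via Young's inequality $|ab|\le\tfrac16|a|^6+\tfrac56|b|^{6/5}$, and concludes with the standard Gronwall lemma applied to $t\mapsto M_4f(t)^2$. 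You instead keep the sublinear inequality $y(t)\le A+B\int_0^t\|u(s)\|_{6}\,y(s)^{5/6}\,ds$ and invoke a Bihari-type comparison, obtaining the explicit bound $\bigl(A^{1/6}+\tfrac{B}{6}\sqrt{T}\,\|u\|_{L^2(0,T;L^6)}\bigr)^6$, which indeed has the required product structure $C_{\Omega,T,R}\cdot D_T$ with $C_{\Omega,T,R}$ vanishing at the origin (e.g.\ via $(a+b)^6\le 2^5(a^6+b^6)$ and $D_T(r)=1+r^6$, since both $A$ and $B$ vanish with the data). Your route is a bit more explicit (a degree-$6$ polynomial bound in $\|u\|_{L^2(0,T;L^6)}$) and avoids the squaring/Young step, at the modest price of justifying the Bihari comparison rigorously — which you do correctly by working with the absolutely continuous majorant and the $\varepsilon$-regularization you mention; the paper's route stays entirely within the linear Gronwall framework. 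Both arguments, like the paper's, rely on the qualitative finiteness of $M_4f$ on $[0,T]$ provided by the moment identity of Theorem~\ref{thm:vla}; and the bootstrap detour you considered (and rightly discarded, since it would smuggle an a priori bound into the constant) is not needed.
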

\begin{proof}[Proof of Corollary~\ref{coro:vla}]
We use the moments estimate of Theorem~\ref{thm:vla} to write 
\begin{align*}
M_4 f(t)  + 4 \int_0^t M_4 f(s) \, ds  \leq 4\int_0^t \int_{\Omega} |m_{3} f| |u|(s,x) \, dx \, ds + M_4 f_0 - \int_0^t \int_{\Gamma^l} \psi(s,x)|v|^4 v\cdot n(x) \, dx  \, dv ,
\end{align*}
and using $\text{Supp}_v(\psi)\subset B(0,R)$ we get by H\"older's inequality 
\begin{align*}
M_4 f(t)  + 4 \int_0^t M_4 f(s) \, ds  &\leq C_{\Omega,T,R}(\|\psi\|_\infty) + M_4 f_0 + 4\int_0^t \| m_{3} f(s)\|_{6/5} \|u(s)\|_{6} \, ds. 
\end{align*}
Thanks to the interpolation Lemma~\ref{interpol} with $\beta = 3$ and $\gamma=4$, we get using the Cauchy-Schwarz inequality
\begin{align*}
M_4 f(t)  &+ 4 \int_0^t M_4 f(s) \, ds \\
 &\leq C_{\Omega,T,R}(\|\psi\|_\infty) + M_4 f_0 +  C\|f\|_\infty^{1/6}\left(\int_0^t M_4 f(s)^{5/3} \, ds \right)^{1/2} \left(\int_0^t \|u(s)\|_{6}^2 \, ds\right)^{1/2},
\end{align*}  
Rising to the power $2$ the previous inequality, we get (changing the constants if necessary) thanks to H\"older's inequality for $t\in[0,T]$
\begin{align*}
M_4 f(t)^2 &\leq C_{\Omega,T,R}(M_4 f_0,\|\psi\|_\infty) + C\|f\|_\infty^{1/3} \|u\|_{L^2(0,T;L^6(\Omega))}^2 \int_0^t M_4 f(s)^{5/3} ds \\
&\leq C_{\Omega,T,R}(M_4 f_0, \|\psi\|_\infty) + C\|f\|_\infty^{1/3} \|u\|_{L^2(0,T;L^6(\Omega))}^2  T^{1/6} \left(\int_0^t M_4 f(s)^{2} ds\right)^{5/6}.
\end{align*}
Using  $|ab|\leq \frac{1}{6} |a|^6 + \frac{5}{6} |b|^{6/5}$ and $\|f\|_\infty \leq C_T(\|f_0\|_\infty,\|\psi\|_\infty)$ (see the maximum principle of Theorem~\ref{thm:vla}) we infer
\begin{align*}
M_4 f(t)^2 \leq C_{\Omega,T,R}(M_4 f_0,\|f_0\|_\infty, \|\psi\|_\infty)+ \|u\|_{L^2(0,T;L^6(\Omega))}^{12/5}\int_0^t M_4f(s)^2 ds,
\end{align*}
and the conclusion follows thanks to Gronwall's lemma.
\end{proof}

\bibliographystyle{plain}
\bibliography{Poicouette}

\begin{thebibliography}{10}

\bibitem{allaire}
G.~Allaire.
\newblock Homogenization of the {N}avier-{S}tokes equations in open sets
  perforated with tiny holes. {I}. {A}bstract framework, a volume distribution
  of holes.
\newblock {\em Arch. Rational Mech. Anal.}, 113(3):209--259, 1990.

\bibitem{ano-bou}
O.~Anoshchenko and A.~Boutet~de Monvel-Berthier.
\newblock The existence of the global generalized solution of the system of
  equations describing suspension motion.
\newblock {\em Math. Methods Appl. Sci.}, 20(6):495--519, 1997.

\bibitem{bardos}
C.~Bardos.
\newblock Probl\`emes aux limites pour les \'equations aux d\'eriv\'ees
  partielles du premier ordre \`a coefficients r\'eels; th\'eor\`emes
  d'approximation; application \`a l'\'equation de transport.
\newblock {\em Ann. Sci. \'Ecole Norm. Sup. (4)}, 3:185--233, 1970.

\bibitem{BLR}
C.~Bardos, G.~Lebeau, and J.~Rauch.
\newblock Sharp sufficient conditions for the observation, control, and
  stabilization of waves from the boundary.
\newblock {\em SIAM J. Control Optim.}, 30(5):1024--1065, 1992.

\bibitem{ben-des-mou}
S.~Benjelloun, L.~Desvillettes, and A.~Moussa.
\newblock Existence theory for the kinetic-fluid coupling when small droplets
  are treated as part of the fluid.
\newblock {\em J. Hyperbolic Differ. Equ.}, 11(1):109--133, 2014.

\bibitem{dgrb}
\'E. {Bernard}, L.~{Desvillettes}, F.~{Golse}, and V.~{Ricci}.
\newblock {A Derivation of the Vlasov-Navier-Stokes Model for Aerosol Flows
  from Kinetic Theory}.
\newblock {\em ArXiv e-prints}, August 2016.

\bibitem{BS}
{\'E}.~Bernard and F.~Salvarani.
\newblock On the exponential decay to equilibrium of the degenerate linear
  {B}oltzmann equation.
\newblock {\em J. Funct. Anal.}, 265(9):1934--1954, 2013.

\bibitem{BDGM}
L.~Boudin, L.~Desvillettes, C.~Grandmont, and A.~Moussa.
\newblock Global existence of solutions for the coupled {V}lasov and
  {N}avier-{S}tokes equations.
\newblock {\em Differential Integral Equations}, 22(11-12):1247--1271, 2009.

\bibitem{bou-gra-lor-mou}
L.~Boudin, C.~Grandmont, A.~Lorz, and A.~Moussa.
\newblock Modelling and numerics for respiratory aerosols.
\newblock {\em Commun. Comput. Phys.}, 18(3):723--756, 2015.

\bibitem{bou-gran-mou}
L.~Boudin, C.~Grandmont, and A.~Moussa.
\newblock {Global existence of solutions to the incompressible
  Navier-Stokes-Vlasov equations in a time-dependent domain}.
\newblock To appear in \emph{J. Differential Equation}, 2016.

\bibitem{boyer-fabrie}
F.~Boyer and P.~Fabrie.
\newblock {\em Mathematical tools for the study of the incompressible
  {N}avier-{S}tokes equations and related models}, volume 183 of {\em Applied
  Mathematical Sciences}.
\newblock Springer, New York, 2013.

\bibitem{bg}
H.~Br{\'e}zis and T.~Gallouet.
\newblock Nonlinear {S}chr\"odinger evolution equations.
\newblock {\em Nonlinear Anal.}, 4(4):677--681, 1980.

\bibitem{cardumou}
J.~Carrillo, R.~Duan, and A.~Moussa.
\newblock Global classical solutions close to equilibrium to the
  {V}lasov-{F}okker-{P}lanck-{E}uler system.
\newblock {\em Kinet. Relat. Models}, 4(1):227--258, 2011.

\bibitem{cha-kan-lee-13}
M.~Chae, K.~Kang, and J.~Lee.
\newblock Global classical solutions for a compressible fluid-particle
  interaction model.
\newblock {\em J. Hyperbolic Differ. Equ.}, 10(3):537--562, 2013.

\bibitem{choi}
Y.-P. {Choi}.
\newblock {Finite-time blow-up phenomena of Vlasov/Navier-Stokes equations and
  related systems}.
\newblock {\em ArXiv e-prints}, June 2016.

\bibitem{choif}
Y.-P. Choi and B.~Kwon.
\newblock {Global well-posedness and large-time behavior for the inhomogeneous
  Vlasov-Navier-Stokes equations}.
\newblock {\em Nonlinearity}, 28(9):3309, 2015.

\bibitem{strange}
D.~Cioranescu and F.~Murat.
\newblock Un terme \'etrange venu d'ailleurs.
\newblock In {\em Nonlinear partial differential equations and their
  applications. {C}oll\`ege de {F}rance {S}eminar, {V}ol. {II} ({P}aris,
  1979/1980)}, volume~60 of {\em Res. Notes in Math.}, pages 98--138, 389--390.
  Pitman, Boston, Mass.-London, 1982.

\bibitem{des-10}
L.~Desvillettes.
\newblock Some aspects of the modeling at different scales of multiphase flows.
\newblock {\em Comput. Methods Appl. Mech. Engrg.}, 199(21-22):1265--1267,
  2010.

\bibitem{dgr}
L.~Desvillettes, F.~Golse, and V.~Ricci.
\newblock The mean-field limit for solid particles in a {N}avier-{S}tokes flow.
\newblock {\em J. Stat. Phys.}, 131(5):941--967, 2008.

\bibitem{DPL}
R.~J. DiPerna and P.-L. Lions.
\newblock Ordinary differential equations, transport theory and {S}obolev
  spaces.
\newblock {\em Invent. Math.}, 98(3):511--547, 1989.

\bibitem{duf_these}
G.~Dufour.
\newblock {\em Mod\'elisation multi-fluide eul\'erienne pour les \'ecoulements
  diphasiques \`a inclusions dispers\'ees}.
\newblock PhD thesis, Universit\'e Paul-Sabatier Toulouse-III, France, 2005.

\bibitem{GT}
D.~Gilbarg and N.~S. Trudinger.
\newblock {\em Elliptic partial differential equations of second order}.
\newblock Classics in Mathematics. Springer-Verlag, Berlin, 2001.
\newblock Reprint of the 1998 edition.

\bibitem{GHK}
O.~Glass and D.~Han-Kwan.
\newblock On the controllability of the relativistic {V}lasov-{M}axwell system.
\newblock {\em J. Math. Pures Appl. (9)}, 103(3):695--740, 2015.

\bibitem{gou-he-mou-zha}
T.~Goudon, L.~He, A.~Moussa, and P.~Zhang.
\newblock The {N}avier-{S}tokes-{V}lasov-{F}okker-{P}lanck system near
  equilibrium.
\newblock {\em SIAM J. Math. Anal.}, 42(5):2177--2202, 2010.

\bibitem{GJV1}
T.~Goudon, P.-E. Jabin, and A.~Vasseur.
\newblock Hydrodynamic limit for the {V}lasov-{N}avier-{S}tokes equations. {I}.
  {L}ight particles regime.
\newblock {\em Indiana Univ. Math. J.}, 53(6):1495--1515, 2004.

\bibitem{Hamdache}
K.~Hamdache.
\newblock Global existence and large time behaviour of solutions for the
  {V}lasov-{S}tokes equations.
\newblock {\em Japan J. Indust. Appl. Math.}, 15(1):51--74, 1998.

\bibitem{HKL}
D.~Han-Kwan and M.~L{\'e}autaud.
\newblock Geometric analysis of the linear {B}oltzmann equation {I}. {T}rend to
  equilibrium.
\newblock {\em Ann. PDE}, 1(1):Art. 3, 84, 2015.

\bibitem{hill}
M.~{Hillairet}.
\newblock {On the homogenization of the Stokes problem in a perforated domain}.
\newblock {\em ArXiv e-prints}, April 2016.

\bibitem{Jabin}
P.-E. Jabin.
\newblock Large time concentrations for solutions to kinetic equations with
  energy dissipation.
\newblock {\em Comm. Partial Differential Equations}, 25(3-4):541--557, 2000.

\bibitem{frosties}
R.~B. Kellogg and J.~E. Osborn.
\newblock A regularity result for the {S}tokes problem in a convex polygon.
\newblock {\em J. Functional Analysis}, 21(4):397--431, 1976.

\bibitem{li-mu-wan-15}
F.~{Li}, Y.~{Mu}, and D.~{Wang}.
\newblock Global well-posedness and large time behavior of strong solution to a
  kinetic-fluid model.
\newblock {\em ArXiv e-prints}, aug 2015.

\bibitem{mel-vas07a}
A.~Mellet and A.~Vasseur.
\newblock Global weak solutions for a
  {V}lasov-{F}okker-{P}lanck/{N}avier-{S}tokes system of equations.
\newblock {\em Math. Models Methods Appl. Sci.}, 17(7):1039--1063, 2007.

\bibitem{Moy1}
I.~Moyano.
\newblock {On the controllability of the 2-D Vlasov-Stokes system}.
\newblock {\em arXiv:1510.07210}, 2015.
\newblock To appear in \emph{Comm. Math. Sci.}

\bibitem{Moy2}
I.~Moyano.
\newblock {Local null-controllability of the 2-D Vlasov-Navier-Stokes system}.
\newblock {\em arXiv preprint arXiv:1607.05578}, 2016.

\bibitem{oro}
P.~J. O'Rourke.
\newblock {\em Collective drop effects on vaporizing liquid sprays}.
\newblock PhD thesis, Los Alamos National Laboratory, 1981.

\bibitem{wil}
F.~A. Williams.
\newblock {\em Combustion theory}.
\newblock Benjamin Cummings, second edition, 1985.

\end{thebibliography}

\end{document}